\documentclass[11pt,reqno]{amsart}
\usepackage{mathrsfs}
\usepackage{hyperref}
\makeatletter
\usepackage{amsfonts}
\usepackage{amsmath}
\usepackage{amssymb}
\usepackage{multicol}
\usepackage{stmaryrd}
\usepackage{cite}
\usepackage{epsfig}
\usepackage{color}
\usepackage{graphics}
\usepackage{graphicx}
\usepackage{epstopdf}

\allowdisplaybreaks

\newcommand{\R}{\mathbb{R}}

\newcommand{\N}{\mathbb{N}}

\newcommand{\cuad}{{\sqcap\kern-.68em\sqcup}}

\newcommand{\s}{s}
\newcommand{\loc}{{\rm loc}}

\newcommand{\cO}{\mathcal{O}}
\newcommand{\cQ}{\mathcal{Q}}

\newcommand{\bH}{\mathbb{H}}
\newcommand{\bG}{\mathbb{G}}
\newcommand{\bO}{{\bf O}}

\newcommand{\bI}{{\bf I}}
\newcommand{\bu}{{\bf u}}

\numberwithin{equation}{section}

\newtheorem{theorem}{Theorem}[section]
\newtheorem{lemma}[theorem]{Lemma}

\newtheorem{definition}[theorem]{Definition}
\newtheorem{remark}[theorem]{Remark}
\newtheorem{proposition}[theorem]{Proposition}
\begin{document}

\title[The fractional Fisher-KPP equation]{The fractional  Fisher-KPP  equation with free boundaries}
 \author[H. Chen, Y. Du, W. Ni]{Huyuan Chen$^{1,2}$,\quad  Yihong Du$^{3}$\quad  and\quad  Wenjie Ni$^{3}$}
\thanks{\hspace{-.5cm}
\mbox{\   } $^1$ Center for Mathematics and Interdisciplinary Sciences, Fudan University, Shanghai 200433, PR China.\\
\mbox{\ }  $^2$ Shanghai Institute for Mathematics and Interdisciplinary Sciences, Shanghai 200433, PR China\\
\mbox{\ \ \ } Email: chenhuyuan@simis.cn. }
\thanks{\hspace{-.6cm}
\mbox{\   } $^3$ School of Science and Technology, University of New England, Armidale, NSW 2351, Australia.\\
\mbox{\ \ \ }  Emails: ydu@une.edu.au (Du), wni2@une.edu.au (Ni).}

\date{\today}

\maketitle

%%%%%%%%%%%%%%%%%%%%%%%%%%%%%%%%%%%%%%%%%%%%%%%%%%%%%%%%%%%%%%%%%%%%%%%%%%%%%%%%%%%%%%

\begin{abstract}
In this paper, we consider a free boundary problem with fractional diffusion $(-\Delta)^s$ $(0<s<1)$, as a model for species spreading, which can be viewed as a natural extension of the free boundary model  in \cite{CDLL2019}, where the nonlocal diffusion term is defined via a continuous integrable kernel function $J(x)$. This fractional diffusion model can also be viewed as a nonlocal version of the free boundary model  in \cite{DuLin2010}, whose local diffusion term is given by the classical Laplacian.

We first establish the global existence and uniqueness of the solution, which is the main contribution of this paper. This well-posedness question has been open for some time now due to the  difficulties caused by the singularity and non-integrability of the associated kernel function of $(-\Delta)^s$, and the lack of general enough regularity results for operators extending the standard fractional Laplacian.  A crucial step that enables us to answer this question relies on  an approximation approach, which breaks an associated linear fractional parabolic initial boundary value problem with curved boundaries into a sequence of  approximating problems, where the functions representing the curved boundaries are replaced by approximating step functions. Making use of existing interior regularity estimates of \cite{FR2017} and boundary estimates arising from the heat kernel estimates of \cite{CKS}, we are able to show that the solutions of the approximating problems converge to the desired solution in the limit. This method has independent interest, and should have applications elsewhere.

We then show that, for Fisher-KPP type nonlinear growth terms,  the long-time dynamics of the model exhibits a spreading-vanishing dichotomy, similar to the earlier models of \cite{DuLin2010, CDLL2019}.  More precise descriptions of the spreading profile of the solution will be considered in a separate work.

\bigskip

\noindent
\textbf{Keywords}: Fractional Laplacian, free boundary, asymptotic behaviour.

\medskip

\noindent
\textbf{AMS Subject Classification}:  35B40, 35R11, 35R35

\end{abstract}

\setcounter{equation}{0}

\section{Introduction}

Fractional diffusion has attracted considerable attention in many scientific fields due to its ability to model nonlocal interactions and anomalous diffusion phenomena. Unlike classical diffusion models, which are based on the standard Laplacian $\Delta$ and assume that individuals in the diffusion process disperse according to the Brownian motion law, fractional diffusion employs the fractional Laplacian operator \( (-\Delta)^s \)\ ($0<s<1$), which is capable of providing a more accurate representation of processes where   long-range dispersal occurs. It has found widespread applications in diverse fields, including physics \cite{Metzler2000,Klafter2005}, finance \cite{Scalas2006,Cartea2007}, and, notably, biological and ecological systems \cite{BRR2011,Valdinoci2017}, where traditional models are  inadequate to capture some  nonlocal effects of the dispersal processes occurring in the real-world.

For fixed $s \in (0,1)$ and $x\in\R^N$,  the  fractional Laplacian operator
$(-\Delta)^s $ is   defined by (among several equivalent forms)
$$(- \Delta)^s v(x):=c_{N,s} \lim_{\epsilon\to 0^+} \int_{\R^N\setminus B_\epsilon(x)} \frac{v(x)-v(y)}{|x-y|^{N+2s}}  dy, $$
where $B_\epsilon(x):=\{y\in \R^N: |y-x|<\epsilon\}$, and
\begin{align*}\label{norm const1}
	c_{N,\s} :=4^{\s}\pi^{-\frac N2}\frac{\Gamma(\frac{N+2\s}2)}{|\Gamma(-\s)|}.
\end{align*}

We note that the associated kernel function $J_{N,s}(x):=\frac{C_{N,s}}{|x|^{N+2s}}$ has a singularity at $x=0$ and
\[\displaystyle \int_{\R^N} J_{N,s}(x)dx=\infty.
\]

The main purpose of this research is to establish the well-posedness and then investigate the long-time behaviour of the following free boundary problem with fractional Laplacian in one space dimension:
\begin{equation}\label{eq 1.1}
	\begin{cases}
		\partial_t u +d(-\Delta)^s  u = f(t,x,u), & t>0,\ x\in \bigl(g(t),h(t)\bigr),\\[3pt]
		u(t,x)=0, & t>0,\ x \in \mathbb{R}\setminus \bigl(g(t),h(t)\bigr),\\[3pt]
		\displaystyle h'(t)= \mu \int_{g(t)}^{h(t)}\int_{h(t)}^{+\infty}\frac{u(t,x)}{|x-y|^{1+2s}}dy\,dx, & t>0,\\[3pt]
		\displaystyle g'(t)=-\mu \int_{g(t)}^{h(t)}\int_{-\infty}^{g(t)}\frac{u(t,x)}{|x-y|^{1+2s}}dy\,dx, & t>0,\\[3pt]
		h(0)=-g(0)=h_0, \ \ u(0,x)=u_0(x), & x\in \R,
	\end{cases}
\end{equation}
where $s\in(0,1)$, $\mu,d,h_0>0$, and  $f$ satisfies some natural conditions to be specified later.

Problem \eqref{eq 1.1} can be viewed as a model for the propagation of a species with density $u(t,x)$ and population range  $[g(t), h(t)]$ in a one-dimensional space. It
is a natural extension of the following nonlocal free boundary problem:
\begin{equation}
\left\{
\begin{aligned}
&\partial_t u-d\int_{\R}J(x-y) [u(t,y)-u(t,x)]\mathrm{d} y=f(u), \ 
& &t>0,~x\in(g(t),h(t)),\\
&u(t,x)=0,& &t>0,\  x\in\R\setminus (g(t),h(t)),\\
&h'(t)=\mu \int_{g(t)}^{h(t)}\int_{h(t)}^{+\infty} u(t, x)   J(x-y) \mathrm{d} y\mathrm{d} x, & &t>0, \\
&g'(t)=-\mu \int_{g(t)}^{h(t)}\int_{-\infty}^{g(t)} u(t, x)   J(x-y) \mathrm{d} y\mathrm{d} x,& &t>0,\\
&h(0)=-g(0)=h_0,\ u(0,x)=u_0(x), & &x\in \R,
\end{aligned}
\right.
\label{nonlocal}
\end{equation}
which was first considered
in \cite{CDLL2019} (see also  \cite{CQW} for $f(u)\equiv 0$), and further developed in \cite{dlz2021, DN-ma, DN-jems} among others.
In \eqref{nonlocal}, the
kernel function $J(x)$ is assumed to satisfy
\begin{description}
\item[(J)] $J\in C(\mathbb R)\cap L^\infty(\mathbb R),\; J\geq 0,\; J(0)>0,~\displaystyle \int_{\mathbb{R}}J(x)
    {\mathrm{d} x}=1$, $J$  is even.
\end{description}
Roughly speaking, $J(x-y)$ can be regarded as
 the probability that an individual at location $y$ moves to location $x$.
 
Under the assumption that the growth function $f(u)$ satisfies
\begin{description}
\item[(f)] $\begin{cases} \mbox{$f\in C^1$,\; $f>0=f(0) = f(1)$ in $(0,1)$, $f'(0)>0>f'(1)$,}\smallskip\\
\mbox{$ f(u)/u$ is decreasing for $u>0$,}
\end{cases}$
\end{description}
problem \eqref{nonlocal} has been rather well understood.  It was shown in \cite{CDLL2019} that \eqref{nonlocal} has a unique  solution triple $(u(t,x), g(t), h(t))$ defined for all time $t>0$, and as time goes to infinity,  the species modelled by
	\eqref{nonlocal} either spreads successfully, or vanishes; namely, as $t\to+\infty$, either
	\begin{itemize}
		\item {\bf Spreading:}
		$\begin{cases} (g(t), h(t))\to (-\infty, +\infty),\\
			u(t,x)\to 1 \mbox{ locally uniformly in } x\in\mathbb R,
		\end{cases}$\\
		or
		
		\item {\bf Vanishing:} $\begin{cases} (g(t), h(t))\to (g_\infty, h_\infty), \mbox{ which is a finite interval},\\
			u(t,x)\to 0 \mbox{  uniformly for } x\in [g(t), h(t)].
		\end{cases}$
	\end{itemize}
	More precise results on the spreading behaviour of \eqref{nonlocal} was obtained in \cite{dlz2021, DN-ma, DN-jems} and other works.

In \eqref{nonlocal}, the equations governing the evolution of the population range $[g(t), h(t)]$ are:
\[
 \begin{cases} \displaystyle u(t, h(t))=0,\ \  h'(t)=\mu \int_{g(t)}^{h(t)}\int_{h(t)}^{+\infty} u(t, x)   J(x-y) \mathrm{d} y\mathrm{d} x,\medskip\\
\displaystyle u(t,g(t))=0,\ \ g'(t)=-\mu \int_{g(t)}^{h(t)}\int_{-\infty}^{g(t)} u(t, x)   J(x-y) \mathrm{d} y\mathrm{d} x.
\end{cases}\]
 The quantity
\[
J_h(t):=\int_{g(t)}^{h(t)}\int_{h(t)}^{+\infty} u(t, x)   J(x-y) \mathrm{d} y\mathrm{d} x
\]
represents  the population flux across the right boundary of $[g(t), h(t)]$ at time $t$ (see \cite{CDLL2019}), which is lost in the diffusion process, and $h'(t)=\mu J_h(t)$ means that the rate of increase of the population range at its right front is proportional to this flux. $J_g(t)$ is similarly defined and explained.

 If  $J(x) \;=\; \frac{c_{1,s}}{|x|^{1+2s}}$ for $
x \;\in\; \mathbb{R}\setminus \{0\}$, then we could rewrite  $(- \Delta)^s v(x)$  as $\displaystyle \int_{\R} J(x-y)[v(x)-v(y)] dy$, and  \eqref{eq 1.1} then formally looks the same to \eqref{nonlocal}. As in \cite{CDLL2019}, for a spreading species with population density $u(t,x)$  and range $[g(t), h(t)]$,  the total population mass leaving the interval \(\bigl[g(t), h(t)\bigr]\) through its right boundary \(x = h(t)\) at time \(t\), caused by the dispersal governed by $\displaystyle\int_{\R} J(x-y)[u(t,x)-u(t,y)] dy$,
 is given by
\[
\int_{g(t)}^{h(t)} \int_{h(t)}^{\infty}
J(x - y)\,u(t,x)\,\mathrm{d}y\,\mathrm{d}x=\frac 1{2s}\int_{g(t)}^{h(t)} u(t,x) [h(t)-x]^{-2s}\,\mathrm{d}x.
\]
Since we assume \(u(t,x) = 0\) for \(x \notin [g(t),h(t)]\), the mass crossing \(x = h(t)\) is  lost to the dispersal process. We may call this, as in \cite{CDLL2019}, the outward flux at \(x = h(t)\), denoted by \(J_h(t)\). Similarly, the outward flux at \(x = g(t)\) is
\[
J_g(t) \;:=\;
\int_{g(t)}^{h(t)}
\int_{-\infty}^{g(t)}
J(x - y)\,u(t,x)\,\mathrm{d}y\,\mathrm{d}x= \frac 1{2s}\int_{g(t)}^{h(t)} u(t,x) [x-g(x)]^{-2s}\,\mathrm{d}x.
\]
Then the free boundary conditions in \eqref{eq 1.1} can be interpreted as assuming that
the  advance rate of the front is proportional to the outward flux:
\[
g'(t) = -\mu\,J_g(t),\ \ \
h'(t) = \mu\,J_h(t),
\]
 as in \eqref{nonlocal}.

However, the theory for \eqref{nonlocal} does not apply to \eqref{eq 1.1} since the kernel function $J(x) = \frac{c_{1,s}}{|x|^{1+2s}}$  has a singularity at $x=0$ and is not integrable over $\R$, doubly violating the assumption ${\bf (J)}$. To extend the theory for \eqref{nonlocal} to \eqref{eq 1.1} is a very natural question but it has remained  open  for many years, due to significant technical difficulties caused by these special features of $\frac{c_{1,s}}{|x|^{1+2s}}$.

Both \eqref{nonlocal} and \eqref{eq 1.1} can be viewed as extensions of the local diffusion model
\begin{equation}\label{A}
		\left\{\begin{array}{ll}
			\partial_t u-d u_{x x}=f(u), & t>0,\, g(t)<x<h(t), \\
			u(t, g(t))=u(t, h(t))=0, & t>0, \\
			g'(t)=-\mu u_x(t, g(t)),\ h^{\prime}(t)=-\mu u_{x}(t, h(t)), & t>0, \\
			g(0)=-h_0, \ h(0)=h_{0}, \, u(0, x)=u_{0}(x), & -h_0 \leq x \leq h_{0},
		\end{array}\right.
	\end{equation}
	which was first considered
in \cite{DuLin2010} for a special $f(u)$, and then extended to more general $f(u)$ in \cite{DL}.  Here, the evolution of the free boundaries $x=g(t)$ and $x=h(t)$ is governed by the equations in the second and third lines of \eqref{A},
	which coincide with the Stefan conditions used in classical one-phase free boundary problems describing the melting of ice in contact with water (\!\!\cite{Rub}). In a biological context, where very few first principles are available to help with the modelling, one may deduce these equations from some reasonable biological assumptions (see, for example, \cite{BDK}), whose principle is along the same line of thinking to \eqref{nonlocal} and \eqref{eq 1.1}: $h'(t)$ and $g'(t)$ are proportional to the population loss at the  boundary of the population range $[g(t), h(t)]$.
	
	It follows from  \cite{DuLin2010} and \cite{DL} that, for $f$ satisfying ${\bf (f)}$,  \eqref{A} has a unique solution, and a spreading-vanishing dichotomy holds for its long-time dynamics. 	More accurate results on the spreading profile of \eqref{A} was obtained in \cite{DMZ}.
The well-posedness of \eqref{A} was established by	a rather standard approach, where a straightening of the free boundary technique plays a central role.
Such a technique turns out to be difficult to extend to \eqref{eq 1.1}, at least for now, since the regularity theory for operators extending the standard fractional Laplacian is not yet general enough as for those extending the classical Laplacian (which was essential for the straightening  of the free boundary technique to work for \eqref{A}).
\smallskip

In this paper, we develop a  new approach to treat  \eqref{eq 1.1},  which allows us to prove that \eqref{eq 1.1} has a unique solution defined for all $t>0$. Moreover, for Fisher-KPP type growth function $f$, we show that the spreading-vanishing dichotomy exhibited by both \eqref{nonlocal} and \eqref{A} also holds for \eqref{eq 1.1}. More precise results for the long-time dynamics of \eqref{eq 1.1} will be proved in a separate  work.
		\medskip
		
		To put this work into perspective against a broader background, we briefly recall some related developments in the theory of propagation dynamics. This will be done in the next subsection, after which, the main results of this paper will be  described with necessary details.

\subsection{The Fisher-KPP model and propagation dynamics}
Let us start by recalling the classical Fisher-KPP equation, which is
a simple looking fundamental model for propagation dynamics,  first used by Fisher \cite{Fisher} and Kolmogorov--Petrovskii--Piskunov (KPP) \cite{KPP} to model the spreading of a species over $\R$, and has the following form
\begin{equation}\label{kpp}
	\begin{cases}
		\displaystyle \partial_t u -d   u_{xx} = f(u), &  t>0,\ x \in \R,\\[3pt]
		u(0,x) = u_0(x), & x \in \mathbb{R}.
	\end{cases}
\end{equation}
Here $u(t,x)$ stands for the density of the spreading species at time $t$ and spatial location $x$, and the initial population $u_0(x)$ is assumed to be a nonnegative continuous function with compact support.
	
	It was shown  in \cite{Fisher, KPP} that \eqref{kpp} admits traveling wave solutions of the form \( u(t,x) = \Phi(x - ct) \) if and only if \( c \geq c_0 \), where \( c_0 =2\sqrt{d\,f'(0)}\) is called the minimal speed.
	More importantly, \( c_0 \) determines the asymptotic spreading speed of solutions with compactly supported initial data; to be precise, for any $0<\epsilon\ll 1$, the solution $u(t,x)$ of \eqref{kpp} satisfies (see \cite{AW78})
	\[\begin{cases}
		\lim_{t\to+\infty} \sup_{|x|\geq (c_0+\epsilon)t} u(t,x)=0,\\
		\lim_{t\to+\infty}\sup_{|x|\leq (c_0-\epsilon)t}|u(t,x)-1|=0.
	\end{cases}
	\]
	These conclusions on $u(t,x)$ imply that for any $\sigma\in (0,1)$, the set
	\[
	\Omega_\sigma(t):=\{x\in\mathbb R: u(t,x)>\sigma\}
	\]
	spreads to the entire space $\mathbb R$ with asymptotic speed $c_0$. 
	The set $\Omega_\sigma(t)$ can be interpreted  biologically  as the population range of the species at time $t$
	modelled by \eqref{kpp}. The exhibition of a propagation speed is a fundamental feature of \eqref{kpp}, representing a phenomenon observed in numerous real world examples of species spreading by using the record of their range expansion  (\!\!\cite{Skellam, SK}). Such a  behaviour also arises in other areas, such as spreading of flames in combustion theory \cite{Ka}, etc.
	
	These pioneering works set the foundation for much of the multi-faceted later developments for propagation dynamics based on parabolic equations,  which form a considerable part of current research of modern PDE theory and applications, including, but not limited to, research on a broad range of problems with various nonlinearities and inhomogeneous media; see, as a small sample, \cite{BH03, BHM,dgm,FZ,FM,HNRR16,LZ,P2,shen10,W82,W02,Xin} and the references therein. This paper can be viewed as a continuation of these efforts along the line of free boundary models.

Apart from the extensions of \eqref{kpp} mentioned in the previous paragraph, there are also extensive research focusing on extensions of the local diffusion operator. 
	To capture nonlocal dispersal factors in real-world diffusion processes, some nonlocal versions of  \eqref{kpp}  have been  studied in recent years, where the local diffusion term $du_{xx}$ is replaced by a suitable nonlocal diffusion operator. One such operator involves a continuous  kernel function satisfying ${\bf (J)}$, and
under such a  change, \eqref{kpp} becomes
\begin{equation}\label{Cauchy}
\begin{cases}
\displaystyle \partial_t u-d\int_{\mathbb R}J(x-y)[u(t,y)-u(t,x)]{\mathrm{d}}y=f(u),\ 
& t>0,~x\in\mathbb R,\\
\displaystyle u(0,x)=u_0(x), &  x\in\mathbb R.
\end{cases}
\end{equation}

Problem \eqref{Cauchy} and its many variations have been extensively studied in the literature; see, for example, \cite{AC,BCV,BHR,FT, Garnier, LZ,  Roq, Yagisita2009} and the references therein. In particular, if {\bf (J)} and {\bf (f)} are satisfied, and if the nonnegative initial function $u_0$ has non-empty compact support, then the basic long-time dynamical behaviour of \eqref{Cauchy} is given by
\[
\lim_{t\to+\infty} u(t,x)=1 \ \ \mbox{ locally uniformly for $x\in\mathbb R$}.
\]
Similar to \eqref{kpp}, one can use $\tilde\Omega_\rho(t):=\{x: u(t,x)>\rho\}$ $(0<\rho<1)$ to represent  the population range at time $t$ modelled by \eqref{Cauchy}, and use its evolution in $t$ to understand the propagation dynamics  of \eqref{Cauchy}. Let
\[
    E_\rho(t):=\partial\tilde\Omega_\rho(t)=\{x\in\mathbb R: u(t,x)=\rho\}, \   x_\rho^+(t):=\sup E_\rho(t) \ \mbox{ and } \; x_\rho^-(t):=\inf E_\rho(t).
\]
It turns out that as $t\to+\infty$, $|x^{\pm}_\rho(t)|$ may go to $\infty$ linearly in $t$ or super-linearly in $t$, depending on whether the following threshold condition is satisfied by the kernel function, apart from {\bf (J)},
\begin{description}
\item[$(\bf J_{thin})$] \  There exists $\sigma>0$ such that
$
\displaystyle\int_{\R} J(x)e^{\sigma x}{\mathrm{d}}x<\infty,
$
\end{description}
which is often called a ``thin tail" condition for $J$.

More precisely,
suppose  that {\bf (f)} and {\bf (J)} are satisfied, then
\[\label{c*}
    \lim_{t\to+\infty}\frac{x_\rho^{\pm}(t)}{t}=\begin{cases}\pm c_* & \mbox{ if $(\bf J_{thin})$ holds},\\
    \pm\infty&  \mbox{ if $(\bf J_{thin})$ does not hold},
    \end{cases}
\]
where $c_*>0$ is uniquely determined by the associated traveling wave problem; see  \cite{W82, Yagisita2009} for details.
\smallskip

The fractional Laplacian $(-\Delta)^s$ is another widely used  nonlocal diffusion operator for propagation dynamics.
For example,  \cite{BRR2011} explored its role in species dispersal within fragmented habitats,  \cite{Valdinoci2017} demonstrated the advantages of nonlocal dispersal in competitive ecological settings, showing that species adopting such strategies may gain a survival advantage in resource-scarce or highly heterogeneous environments. Similarly,  \cite{Pellacci2018} analyzed optimal dispersal strategies in spatially heterogeneous landscapes via the principal eigenvalue of  fractional Neumann problems, demonstrating that species may favour local or nonlocal dispersal depending on habitat fragmentation.  \cite{Meleard2014} performed an asymptotic analysis of population models incorporating a fractional Laplacian with local or nonlocal reaction terms. %showing that under specific rescalings, the stable state invades the unstable state exponentially, with dynamics sometimes converging to a Hamilton-Jacobi equation.
Additional studies, such as \cite{Valdinoci2024,Roquejoffre2012,Annizzotto2021}, further explored the impact of fractional dispersal on ecological dynamics and mathematical modelling approaches.

 When the classical diffusion term is replaced by the fractional Laplacian in \eqref{kpp}, we obtain
\begin{equation}\label{1}
	\begin{cases}
		\displaystyle \partial_t u + d(-\Delta)^s u = f(u), \ &  t>0,\ x \in \bigl(g(t),h(t)\bigr),\\[3pt]
		u(0,x) = u_0(x), & x \in \mathbb{R}.
	\end{cases}
\end{equation}

For  \eqref{1},  \cite{Cabre2013} showed that the front propagation, measured by the position of the boundary of the level set
$\tilde\Omega_\rho(t):=\{x: u(t,x)>\rho\}$ $(0<\rho<1)$,  grows exponentially in time.   \cite{Cabre2012} examined similar equations in periodic media, discovering that the exponential propagation speed is influenced by a periodic principal eigenvalue and remains direction-independent. Additionally, \cite{Souganidis2019} explored front propagation in a generalized KPP-type fractional problem in periodic media, obtaining both the propagation speed and the asymptotic profile of solutions.

\vspace{0.5em}
\noindent
%{\bf Motivation for free boundary problems.}

However, the models \eqref{kpp}, \eqref{Cauchy} and \eqref{1} do not
 provide a clear delineation of the population range. They all use
 \[
	\Omega_\sigma(t):=\{x\in\mathbb R: u(t,x)>\sigma\}
	\]
	to approximate the population range, which
	spreads to the entire space $\mathbb R$ as time goes to infinity, with finite asymptotic speed  (for \eqref{kpp} and \eqref{Cauchy} with a thin-tailed kernel) or with infinite asymptotic speed (for \eqref{Cauchy} whose kernel is not thin-tailed and for \eqref{1}).
	 	
	From a biological point of view, a more natural choice of the population range at time $t$ is $\Omega_0(t):=\{x\in\mathbb R: u(t,x)>0\}$, but  due to the strong maximum principle for \eqref{kpp}, \eqref{Cauchy} and \eqref{1}, $\Omega_0(t)\equiv \mathbb R$ for $t>0$ even though $\Omega_0(0)$ is bounded by assumption. Therefore to capture the propagation dynamics via these models, it is necessary to use $\Omega_\sigma(t)$ rather than $\Omega_0(t)$ to stand for the population range.
	However, it is easily seen that  for any fixed $t>0$, $\Omega_\sigma(t)\to\mathbb R$ as $\sigma\to 0$. Therefore these models do not give the precise population range for positive time, although it captures the crucial propagation rate, which is independent of $\sigma$.
	
	The corresponding free boundary models \eqref{A}, \eqref{nonlocal} and \eqref{eq 1.1} fill this gap by explicitly giving the population range $[g(t), h(t)]$, while retaining the capacity of capturing  the asymptotic spreading profile. For \eqref{A}, the precise spreading behaviour was obtained in \cite{DMZ}, while  precise spreading behaviour for \eqref{nonlocal} was obtained in  \cite{dlz2021, DN-jems, DN-ma}. The purpose of this research is to establish a similar theory for \eqref{eq 1.1}.

\subsection{Main results}
We now describe the main results for \eqref{eq 1.1} obtained in this paper with all the needed details.

We start by specifying the assumptions.
For the initial function \(u_0\), we require \(u_0 \in \mathcal I_0(\cO_0)\), where $\cO_0:=(-h_0, h_0)$ and
\begin{equation}\label{102}
	\mathcal I_0 (\cO_0): =\Big\{w\in  C(\R): w\gneqq 0  \mbox{ in } \cO_0,\ w \equiv 0\ \ {\rm in}\ \, \R\setminus \cO_0  \Big\}.
\end{equation}
The growth term $f: \mathbb{R}^+\times\mathbb{R}\times\mathbb{R}^+
\rightarrow\mathbb{R}$ is assumed to  satisfy

\begin{description}
	\item[(f1)]  For fixed $\tau\geq 0$, $f(\cdot,\cdot, \tau)\in C^{\frac{\alpha}{2s},\alpha}(\R^+\times\R)$ for some $\alpha\in (0,1)$, and\\
	\mbox{\ \ \ \ \ \  }$f(t,x,0)\equiv 0$,\  $f(t,x,\tau)$
	is locally Lipschitz in $\tau\in\R^+$ uniformly in $(t,x)$
	\footnote{Namely, for any $K>0$, there exists a constant $L=L(K)>0$ such that \\ \mbox{\ \ \ \ \ }	$\left|f(t,x,\tau_1)-f(t,x,\tau_2)\right|\le L\, |\tau_1-\tau_2|$ for $\tau_1, \tau_2\in [0, K]$ and $(t,x)\in \R^+\times \R$.};	
\end{description}
\begin{description}
	\item[(f2)] There exists
	$K_0>0$ such that $f(t,x,\tau)\leq 0$ for $\tau\ge K_0$ and $(t,x)\in \R^+\times \R$.
\end{description}

\begin{theorem}[Well-posedness]\label{teo 1}
	Assume that  $f$ verifies  {\bf  (f1)} and {\bf (f2)};
then for any $u_0\in \mathcal I_0 (\cO_0)$,  problem
	\eqref{eq 1.1} has a unique solution $(u, g,h)$ defined for all $t>0$.
\end{theorem}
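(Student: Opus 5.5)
The plan follows the scheme of \cite{CDLL2019}, with a new linear theory underneath it. We prove local existence and uniqueness by a contraction mapping argument on the free boundaries, and then extend the solution to all $t>0$ using a priori bounds.

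\emph{Step 1: the linear problem with prescribed boundaries.} Fix $T>0$ and Lipschitz functions $g,h$ with $g$ nonincreasing, $h$ nondecreasing, $g(0)=-h_0$, $h(0)=h_0$. Consider
\[
\partial_t u+d(-\Delta)^s u=c(t,x)u \ \text{ in } \Omega_{g,h}:=\{(t,x):\ 0<t\le T,\ g(t)<x<h(t)\},
\]
with $u=0$ outside $(g(t),h(t))$ and $u(0,\cdot)=u_0$. We want a unique weak (viscosity) solution $u\in C(\overline{\Omega_{g,h}})$ satisfying the boundary decay
\[
0\le u(t,x)\le C\,\bigl[(h(t)-x)(x-g(t))\bigr]^{s}.
\]
We obtain it by replacing $g,h$ with step functions $g_n,h_n$ that are constant on the intervals $[t_{k-1},t_k]$ of a partition of mesh $1/n$. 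On each such interval the domain is a fixed interval, so the problem is a standard fractional Cauchy--Dirichlet problem. It is solvable by semigroup/heat-kernel methods, and the Dirichlet heat kernel bounds of \cite{CKS} give $u_n\le C\,\mathrm{dist}(x,\partial(g_n,h_n))^{s}$ uniformly in $n$. This uses the monotonicity of the domain and a barrier built from $\sup u_0$ and the $L^\infty$ bound of $c$. Solutions on successive intervals are concatenated, with the restriction at $t_k$ serving as the next initial datum. The interior estimates of \cite{FR2017} give $C^{\beta}$ bounds for $u_n$ on compact subsets of $\Omega_{g,h}$. Combined with the uniform boundary decay, Arzel\`a--Ascoli then yields a limit $u$ solving the problem. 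Uniqueness follows from a comparison principle, which is proved via the maximum principle for $(-\Delta)^s$ on moving domains. \textbf{This step is the main obstacle}: one must obtain estimates uniform in $n$ near the curved boundary, and verify that the limit actually attains zero boundary values.

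\emph{Step 2: the flux map and the fixed point.} Because of the decay bound from Step 1, the free boundary integrals are finite. Indeed, $\int_g^h u\,(h-x)^{-2s}dx$ converges when $s-2s>-1$, which always holds for $s<1$. Given $(g,h)$ in the set $\Sigma_T$ of such pairs with $|g'|,|h'|\le M$, we take $c(t,x)=f(t,x,u)/u$, using the Lipschitz condition in (f1), and solve the nonlinear problem by an inner fixed point. We then define
\[
\tilde h(t)=h_0+\mu\int_0^t\!\int_{g}^{h}\!\int_{h}^{\infty}\frac{u}{|x-y|^{1+2s}}\,dy\,dx\,d\tau,
\]
and $\tilde g$ analogously. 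The bound $u\le C\,\mathrm{dist}^{s}$ together with the integral estimate gives $|\tilde h'|\le M$, where $M$ depends only on $\|u_0\|_\infty$, $K_0$ (through (f2) and comparison, $u\le \max\{\|u_0\|_\infty,K_0\}$) and $h_0$. We then estimate $\|u_1-u_2\|$ in terms of $\|(g_1,h_1)-(g_2,h_2)\|_{C[0,T]}$. For this we compare each $u_i$ with shifts and dilations of the other, using the boundary decay and the comparison principle. This yields a contraction for small $T$, and Banach's fixed point theorem gives a unique local solution. Positivity of $u$ in $\Omega_{g,h}$ (strong maximum principle) makes $h$ strictly increasing.

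\emph{Step 3: global extension.} Since $u\le \max\{\|u_0\|_\infty,K_0\}$ and $|g'|,|h'|\le M$, with $M$ independent of $T$, we can restart the argument from any time. The local existence time depends only on these bounds and on a lower bound of $h(t)-g(t)\ge 2h_0$. Hence the maximal existence time is infinite, and uniqueness propagates along the way.
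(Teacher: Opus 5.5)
Your Step 1 is essentially the paper's Section 2: step-function approximation of $(g,h)$, interior estimates from \cite{FR2017}, and boundary control from the \cite{CKS} heat kernel bounds with a constant independent of the interval. Beyond that, your argument has two genuine gaps.

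First, a bound $u\le C\,\mathrm{dist}^s$ with $C$ depending only on $\|u_0\|_\infty$, $K_0$, $h_0$ is impossible. We have $u(t,\cdot)\to u_0$ as $t\to0^+$, and $u_0\in\mathcal I_0(\cO_0)$ is merely continuous. What the heat kernel bounds actually give is $u\le C_0\bigl(1\land \rho^s/\sqrt t\bigr)$, hence only $h'(t)\le \hat C\,t^{-1/2}(h(t)-g(t))^{1-s}$. This is not cosmetic. If $s>1/2$ and $u_0(x)\sim(h_0-x)^\gamma$ near $h_0$ with $0<\gamma\le 2s-1$, then $\int u_0\,(h_0-x)^{-2s}dx=\infty$, and Fatou's lemma gives $h'(t)\to\infty$ as $t\to0^+$. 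So your class of pairs with $|g'|,|h'|\le M$ is not invariant under the flux map. Moreover $M$ must grow with $h-g$, so the claim in Step 3 that $M$ is independent of $T$ is false. Global bounds can still be recovered from the sublinear growth $(h-g)^{1-s}$, as the paper does by choosing $R$ with $R=2h_0+2\hat C_0\sqrt T\,R^{1-s}$.

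Second, and decisively, the contraction estimate on which both your local existence and your uniqueness rest is asserted with no mechanism, and there is good reason to doubt it. The flux $\int_g^h u\,(h-x)^{-2s}dx$ has a weight that is singular exactly where $u$ is least controlled. Moving the boundary by $\delta$ changes $u$ near it by order $\delta^s$. Even with the profile $u\approx c\rho^s$ frozen, replacing $h$ by $h+\delta$ in the weight changes the flux by order $\delta^{1-s}$. The natural estimates therefore give only H\"older dependence of $(\tilde g,\tilde h)$ on $(g,h)$. A bound $CT\|(g_1,h_1)-(g_2,h_2)\|^\theta$ with $\theta<1$ is not a contraction however small $T$ is. Comparing with shifted or dilated copies does not help. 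Dilations preserve the equation only if time is rescaled, which distorts the moving boundaries. One-sided pointwise bounds $u_1\le\tilde u_2$ give no two-sided flux bound, since the two fluxes are evaluated at different boundary points with a singular weight.

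This is exactly why the paper abandons Banach's theorem. Existence comes from Schauder's theorem on the closed convex set $\mathcal D_T(R)$ of monotone continuous pairs, and it works on every $[0,T]$ at once. It needs only two ingredients. One is continuity of $(g,h)\mapsto u^{g,h}$ into $L^\infty$ with no rate (Lemma \ref{u-c}, proved from interior compactness plus the uniform $t^{-1/2}$-weighted boundary estimate). The other is equicontinuity from $\bar h'\le\hat C_0/\sqrt t$. Uniqueness is a separate argument. By antisymmetry $\int_g^h(-\Delta)^s u\,dx=\frac1\mu(h'-g')$, so the quantity $\widehat\Psi$ of Lemma \ref{lem-ent} is conserved along solutions and increasing in $(u,h,-g)$. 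Lemma \ref{c-p} places the pointwise maximum of two putative solutions below solutions with slightly enlarged data $(u_0^n,h_0^n)$, whose $\widehat\Psi$ tends to the common initial value. This contradicts the strict inequality $\widehat\Psi_1(T)<\widehat\Psi_*(T)$. To make your route work you would need a genuine Lipschitz-type flux stability estimate, which is the hard point you have not supplied.
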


To analyze the long-term dynamical behavior of \eqref{eq 1.1}, for simplicity and clarity, we impose additional restrictions on $f$. To be precise,  the following additional Fisher-KPP type conditions will be assumed.

\begin{description}
	\item[(f3)] $f=f(u)$ is independent of $(t,x)$, $f'(0)$ exists and $f'(0)>0$.\smallskip
	\item[(f4)]  $f(u)/u$ is non-increasing  and $f(u)/u<f'(0)$ for $u\in (0,\infty)$.
	 \end{description}

\begin{theorem}[Spreading--vanishing dichotomy]\label{teo 2.2}
	Assume that   $f$ satisfies conditions {\bf (f1)} through to {\bf (f4)}, and  $u_0\in \mathcal I_0(\cO_0)$.
	Let $(u,g,h)$ be the unique solution of problem \eqref{eq 1.1}. Then the following dichotomy holds:

\hspace{0.3cm} {\rm either (i)} {\bf (vanishing)}\ \ \   $\begin{cases}\displaystyle \lim_{t\to\infty} (g(t), h(t))=(g_\infty, h_\infty) \mbox{ is a finite interval,}\\
	\displaystyle \lim_{t\to\infty}u(t,x)= 0 \mbox{ uniformly in $x\in\R$, } \end{cases}$
	
	\hspace{0.3cm} {\rm or (ii)} {\bf (spreading)}\ \ \   $\begin{cases} \displaystyle\lim_{t\to\infty} (g(t), h(t))=(-\infty, \infty),\\
	\displaystyle \lim_{t\to\infty}u(t,x)= u_* \mbox{ locally uniformly in $x\in\R$}, \end{cases}$ \\
	where $u_*$ is the unique positive zero point of $f(u)$.
\end{theorem}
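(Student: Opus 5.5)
The proof will follow the scheme used for \eqref{nonlocal} in \cite{CDLL2019} and for \eqref{A} in \cite{DuLin2010}. It relies on the well-posedness and comparison principle behind Theorem \ref{teo 1}, together with the principal eigenvalue theory of $d(-\Delta)^s - f'(0)$ on bounded intervals. First I record the monotonicity: $h'(t)>0>g'(t)$ for $t>0$, since $u>0$ in $(g(t),h(t))$ by the strong maximum principle. Hence the limits $h_\infty=\lim h(t)\in(h_0,\infty]$ and $g_\infty=\lim g(t)\in[-\infty,-h_0)$ exist. I also use $0\le u\le \max\{\|u_0\|_\infty,K_0\}$ from \textbf{(f2)} and comparison.

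Next I need the eigenvalue problem. Let $\lambda_1(l)$ be the principal eigenvalue of $d(-\Delta)^s\phi - f'(0)\phi=\lambda\phi$ in $(-l,l)$ with $\phi=0$ outside. By scaling, the Dirichlet eigenvalue of $(-\Delta)^s$ on $(-l,l)$ equals $l^{-2s}$ times its value on $(-1,1)$. Thus $\lambda_1(l)$ is continuous and strictly decreasing in $l$, tends to $+\infty$ as $l\to0$, and tends to $-f'(0)<0$ as $l\to\infty$. So there is a unique $l^*$ with $\lambda_1(l^*)=0$.

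\textbf{Step 1: $h_\infty<\infty$ iff $g_\infty>-\infty$, and in that case $h_\infty-g_\infty\le 2l^*$.}
Suppose $h_\infty-g_\infty>2l^*$. Then for some $T$ the interval $(g(T),h(T))$ contains an interval $I$ of half-length $l>l^*$. Since $\lambda_1(l)<0$ and $f(u)\ge (f'(0)-\epsilon)u$ for small $u$, $\delta\phi$ is a subsolution on $I$ for all $t\ge T$. This gives $u\ge\delta\phi$ there for $t\ge T$. The free-boundary integrals $\int u(t,x)(h(t)-x)^{-2s}dx$ are then bounded below by a positive constant, so $h'(t)\ge c>0$ and $g'(t)\le -c$ for all $t\ge T$, forcing $h_\infty=-g_\infty=\infty$. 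Conversely, if one of the limits is infinite, then $h_\infty-g_\infty>2l^*$ and the same argument makes both infinite.

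\textbf{Step 2: vanishing case.}
Assume $h_\infty-g_\infty<\infty$. I want $\|u(t,\cdot)\|_\infty\to0$. Integrability gives $\int_0^\infty h'(t)\,dt<\infty$. I would first establish uniform H\"older bounds for $u$ in $t\ge1$; these come from the interior estimates of \cite{FR2017} and the boundary estimates used in proving Theorem \ref{teo 1}. With them, $h'$ is uniformly continuous, so $h'(t)\to0$, which means $\int u(t,x)(h(t)-x)^{-2s}dx\to0$.

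By compactness, any $\omega$-limit of $u(t_n+\cdot,\cdot)$ is a nonnegative entire solution $w$ on the fixed interval $(g_\infty,h_\infty)$, vanishing outside it. This $w$ satisfies $\int w(x)(h_\infty-x)^{-2s}dx\equiv0$, so $w\equiv0$. Hence $u\to0$ uniformly.

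An alternative route: Step 1 gives $h_\infty-g_\infty\le 2l^*$, and if the length is $<2l^*$ then $\lambda_1>0$ and comparison with $Me^{-\lambda t}\phi$ on a slightly larger interval gives decay directly. The borderline case $=2l^*$ is best handled by the limit argument above, so I would use that argument throughout.

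\textbf{Step 3: spreading case.}
Assume $h_\infty=-g_\infty=\infty$. For the lower bound, for any large $l$ and $t\ge T_l$, $u$ is a supersolution of the fixed-domain problem $v_t+d(-\Delta)^sv=f(v)$ on $(-l,l)$ with $v=0$ outside. Under \textbf{(f3)}, \textbf{(f4)} and $\lambda_1(l)<0$, this problem has a unique positive steady state $v_l$, and $v(t)\to v_l$ by a standard monotone iteration from the small subsolution $\delta\phi$. Then $\liminf u\ge v_l$ locally uniformly. Next I show $v_l\to u_*$ locally uniformly as $l\to\infty$: $v_l$ is increasing in $l$, and its limit is a positive bounded entire-line solution of $d(-\Delta)^sV=f(V)$, which must equal $u_*$ by a Liouville-type argument. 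Uniqueness follows from $f(u)/u$ decreasing and a sliding/comparison argument; alternatively, compare with the ODE $\dot z=f(z)$ after establishing $\inf V>0$. For the upper bound, $u\le \bar z(t)$, the solution of $\dot z=f(z)$ with $z(0)=\max\{\|u_0\|_\infty,K_0\}$, and $\bar z(t)\to u_*$.

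I expect the main obstacle to be the regularity needed in Step 2: uniform-in-time H\"older/boundary estimates near the moving boundaries, which control the flux integrals with their singular weight $(h-x)^{-2s}$. I would import these from the approximation and heat kernel bounds used to prove Theorem \ref{teo 1}. The Liouville step for $v_l\to u_*$ is the secondary difficulty.
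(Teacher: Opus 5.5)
Your proposal is correct. Steps 1 and 3 are in substance the paper's argument in the proof of Theorem \ref{thm-s-v}.

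In Step 1 the paper gets the positive lower bound on the flux differently. It compares with the fixed-domain solution on $\cO_{t_0}$ and uses that solution's convergence to the positive steady state (Proposition \ref{pr cyl-1}(ii)). Your stationary subsolution $\delta\phi$ on a fixed subinterval is slightly more direct. Strictly speaking, $h'(t)\ge c$ only follows when $h_\infty<\infty$, because otherwise the weight $(h(t)-x)^{-2s}$ degenerates. So the contradiction has to be run on each side separately, which your argument does implicitly.

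In Step 3 the paper uses exactly the sliding idea you allude to. Monotonicity of $\hat u_{\bI}$ in the domain, plus translation invariance, gives $V(x_1)\le V(x_2)$ whenever $|x_1-x_2|<1$. Hence $V$ is a positive constant and so equals $u_*$, with no Liouville theorem or comparison principle for the Cauchy problem on $\R$ needed.

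The genuine difference is Step 2. The paper does not use the free boundary condition there. From $|\cO_\infty|\le l_*$ (your $h_\infty-g_\infty\le 2l^*$) it gets $d\lambda_{s,1}(\cO_\infty)\ge f'(0)$. It then compares $u$ with the fixed-domain solution on $\cO_\infty$ and applies Proposition \ref{pr cyl-1}(i). The borderline case $d\lambda_{s,1}(\cO_\infty)=f'(0)$ is handled there by a time-monotone solution converging to a nonnegative steady state, which vanishes by Lemma \ref{lm 4.1-0}. That route needs only long-time estimates on a fixed cylinder, but it relies on the strict inequality $f(u)/u<f'(0)$ in \textbf{(f4)}. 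Your flux argument shows that a bounded range by itself forces $u\to 0$, independently of \textbf{(f3)}--\textbf{(f4)} and of the eigenvalue threshold. The price is estimates on the moving domain that are uniform in time.

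On the obstacle you flag: you do not need H\"older bounds up to the moving boundary. The constant in the boundary estimate of Theorem \ref{thm-semi} depends on $T$, so restart Lemma \ref{lem-I} on windows $[\tau,\tau+2]$, using that $c_2$ is independent of the interval (Remark \ref{rm-hk}). This gives $u\le C\rho^s$ for $t\ge 1$ with $C$ independent of $t$, which is exactly what the paper does in the proof of Lemma \ref{lm-decay}. Combined with the interior estimates of Lemma \ref{lemmajfa}, that suffices.

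You can also drop Barbalat's lemma and the equation for the $\omega$-limit. Suppose $u(t_n,x_n)\ge\epsilon_0$ with $t_n\to\infty$. The bound $u\le C\rho^s$ keeps $x_n$ a fixed distance from $\partial\cO_{t}$ for $t$ near $t_n$. Interior continuity then gives $u\ge\epsilon_0/2$ on a box of fixed size around $(t_n,x_n)$. Hence $h(t_n+\delta)-h(t_n)\ge c\delta>0$ for all $n$, contradicting the convergence of $h(t)$.
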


\begin{theorem}[Spreading--vanishing criteria]\label{teo 2}
Assume that the hypotheses of Theorem \ref{teo 2.2} hold, and let $\lambda_{s,1}(\cO_0)$ denote the first eigenvalue of $(-\Delta)^s$ over $\cO_0$ subject to the zero Dirichlet boundary conditions. Then the following conclusions hold:
	\begin{itemize}
		\item[{\rm (i)}] If $\lambda_{s,1}(\cO_0)\leq f'(0)/d$, then spreading occurs regardless of the choice of $\mu>0$ and $u_0\in\mathcal I_0(\cO_0)$.
		\item[{\rm (ii)}] If $\lambda_{s,1}(\cO_0)> f'(0)/d$, then there exists a threshold value $\mu^*\in (0,+\infty)$, depending on $u_0$, such that spreading occurs if $\mu>\mu^*$, and vanishing occurs if $\mu\in (0,\mu^*]$.
	\end{itemize}
\end{theorem}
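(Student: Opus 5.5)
The plan follows the standard route of \cite{DuLin2010, CDLL2019}. Three tools are taken from the preceding theory: the comparison principle for \eqref{eq 1.1}, the dichotomy of Theorem \ref{teo 2.2}, and continuous dependence of the solution on $\mu$.

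\textbf{A vanishing criterion.} I first record a characterization of vanishing. If $h_\infty-g_\infty<\infty$, then necessarily
\[
\lambda_{s,1}\bigl((g_\infty,h_\infty)\bigr)\ge f'(0)/d .
\]
To see this, argue by contradiction. If $\lambda_{s,1}((g_\infty,h_\infty))<f'(0)/d$, continuity of the eigenvalue in the interval gives some $T$ with $\lambda_{s,1}((g(T),h(T)))<f'(0)/d$. The solution of the fixed-domain problem on $(g(T),h(T))$, started from $u(T,\cdot)$, is a lower solution. It converges to the positive steady state of the Dirichlet problem, which exists because $f(u)/u<f'(0)$ allows small multiples of the principal eigenfunction to serve as subsolutions. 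Hence $u$ does not tend to $0$, contradicting Theorem \ref{teo 2.2}. Moreover, $\lambda_{s,1}$ of an interval depends only on its length $\ell$, scales like $\ell^{-2s}$, and is strictly decreasing in $\ell$. Since $(g(t),h(t))$ strictly expands, in the vanishing case we get $\lambda_{s,1}((g(t),h(t)))>f'(0)/d$ for all $t$.

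\textbf{Part (i).} If $\lambda_{s,1}(\cO_0)\le f'(0)/d$, the interval strictly expands for $t>0$, because $h'(0)>0$ as $u_0\gneqq0$. Therefore $\lambda_{s,1}((g_\infty,h_\infty))<\lambda_{s,1}(\cO_0)\le f'(0)/d$, so vanishing is impossible and spreading follows from Theorem \ref{teo 2.2}.

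\textbf{Part (ii), small $\mu$ gives vanishing.} Construct an upper solution of the form
\[
\bar h(t)=h_0(1+\delta-\tfrac\delta2 e^{-\gamma t}),\qquad \bar g=-\bar h,\qquad \bar u=Me^{-\gamma t}\phi(x/\bar h(t)\cdot h_0(1+\delta)).
\]
Here $\phi$ is the principal eigenfunction on $(-h_0(1+\delta),h_0(1+\delta))$, and $\delta$ is chosen so that $\lambda_{s,1}$ of that interval still exceeds $f'(0)/d$. An alternative which avoids rescaling the eigenfunction is to take $\bar u=Me^{-\gamma t}\psi(x)$, where $\psi$ is the torsion-type function satisfying $(-\Delta)^s\psi\ge\lambda\psi$ on the larger fixed interval, positive there and extending beyond $\bar h$. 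The fractional equation inequality then uses $f(u)\le f'(0)u$ together with the spectral gap. The free-boundary inequality
\[
\bar h'\ge \mu\int\!\!\int \frac{\bar u(t,x)}{|x-y|^{1+2s}}\,dy\,dx
\]
holds for small $\mu$, since the flux integral is finite. Its finiteness needs the boundary behaviour $\phi\sim \mathrm{dist}^s$, so that $\int u\,(h-x)^{-2s}\,dx<\infty$. This gives vanishing for $\mu$ small.

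\textbf{Part (ii), large $\mu$ gives spreading.} I would show that if $\mu$ is large then $h(T)-g(T)$ exceeds the critical length $\ell^*$, defined by $\lambda_{s,1}=f'(0)/d$, at some time $T$. To do this, compare with the solution of \eqref{eq 1.1} where $f$ is replaced by $-Lu$; this is a lower solution by monotone comparison in $f$. Then integrate the equation. Along this lower solution, $h'-g'=\mu(J_h+J_g)$, and $\int u$ stays bounded below on $[0,1]$ by $e^{-Lt}$ times a positive quantity. The flux $J_h+J_g\ge c\int u\,dx$ whenever the domain has length $\le \ell^*$, since $(h-x)^{-2s}\ge(\ell^*)^{-2s}$. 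Hence if the domain stayed below length $\ell^*$ on $[0,1]$, we would get $h(1)-g(1)\ge 2h_0+\mu c'$, which is a contradiction for large $\mu$.

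\textbf{Threshold.} By comparison, $(g,h)$ is monotone in $\mu$, so the set $\Sigma$ of $\mu$ giving vanishing is an interval. Set $\mu^*=\sup\Sigma$. Closedness, i.e.\ that $\mu^*\in\Sigma$, comes from continuous dependence on $\mu$. If $\mu^*$ gave spreading, then $h(T)-g(T)>\ell^*$ at some finite $T$, and this persists for nearby $\mu<\mu^*$, contradicting the vanishing of those $\mu$. The lower bound from the previous step then gives $\mu^*<\infty$.

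\textbf{Main obstacle.} I expect the main obstacle to be the upper-solution construction for small $\mu$. It requires a comparison principle allowing $\bar u$ to be nonzero outside the moving interval, or else careful handling of the nonlocal term near the moving boundary. It also requires precise $\mathrm{dist}^s$ boundary estimates for $\phi$ to control the singular flux.
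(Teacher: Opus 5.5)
Your outline is correct, and its frame is the paper's: the vanishing criterion $|\cO_\infty|\le l_*$, part (i) by strict expansion of $(g,h)$, and the threshold via monotonicity in $\mu$ plus closedness at $\mu^*$ (Theorem \ref{thm-s-v} and Steps 3--4 of Theorem \ref{thm-criteria}). The two middle steps take different routes, and each needs some filling in.

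\textbf{Small $\mu$.} The paper builds no explicit supersolution.
\begin{itemize}
\item It prescribes $\bar h(t)=h_0+\beta(1-\frac1{2+t})$ and lets $\bar u$ solve the semilinear problem on $(-\bar h,\bar h)$.
\item Lemma \ref{lm-decay} gives $\bar u\le\tilde C_0e^{-\epsilon t}\bar\rho^{\,s}$ for $t>T$.
\item It then argues in two phases. On $[0,T]$, the uniform bound $u\le C_0\rho^s/\sqrt t$ and small $\mu$ keep $h(T)-g(T)\le 2h_0+\beta/2$. Remark \ref{rm-cp} is then applied from $t=T$.
\end{itemize}
Your rescaled eigenfunction $\bar u=Me^{-\gamma t}\phi(xL/\bar h(t))$, $L=h_0(1+\delta)$, is the Du--Lin construction. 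It works from $t=0$ and gives an explicit $\underline\mu$. Since it vanishes outside $(\bar g,\bar h)$, Remark \ref{rm-cp} applies directly, so the ``main obstacle'' you name does not arise.

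Its price is two facts about $\phi$. You already flag $\phi(y)\le C(L-|y|)^s$, which keeps the flux finite. What is missing is that $\phi$ is even and nonincreasing in $|y|$. This is not optional, because
\[
\partial_t\bar u=-\gamma\bar u-Me^{-\gamma t}\,\frac{\bar h'(t)}{\bar h(t)}\,y\phi'(y),\qquad y=\frac{xL}{\bar h(t)},
\]
and the last term is nonnegative only if $y\phi'(y)\le0$. This is true (moving planes or rearrangement), but it is an external input the paper's route avoids.

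Drop the alternative with $\psi>0$ beyond $\bar h$. There $\bar u(t,\bar h(t))>0$, so $\int\bar u\,(\bar h-x)^{-2s}dx=\infty$ for $s\ge\frac12$, and the free-boundary inequality fails.

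\textbf{Large $\mu$.} The paper bounds $h'$ from below using $u\ge c_1\rho_{\cO_0}^s$ on $[1,2]$, which comes from the fixed-domain problem on $\cO_0$. Your mass argument avoids any boundary lower bound. However, the lower bound on $\int u$ is not automatic. By the computation in Lemma \ref{lem-ent},
\[
\frac{d}{dt}\int_{g(t)}^{h(t)}u\,dx=-\frac d\mu\bigl(h'(t)-g'(t)\bigr)+\int_{g(t)}^{h(t)}f(u)\,dx,
\]
so diffusion drains mass at rate $\frac d\mu(h'-g')$. Using $f(u)\ge-Lu$ gives, for $t\le 1$,
\[
e^{Lt}\int_{g(t)}^{h(t)}u(t,x)\,dx\ \ge\ \int u_0-\frac{de^{L}}{\mu}\bigl(h(t)-g(t)-2h_0\bigr).
\]
Only the contradiction hypothesis $h-g\le\ell^*$ together with large $\mu$ makes the right side at least $\frac12\int u_0$; write this out. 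Apply it to $u$ itself rather than comparing with the $-Lu$ problem, since Lemma \ref{c-p} needs strictly nested initial intervals.

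\textbf{Threshold.} Monotonicity and continuity in $\mu$ are not earlier results of the paper, so you cannot cite them.
\begin{itemize}
\item Monotonicity: because Lemma \ref{c-p} needs $h_0^1<h_0^2$, the paper perturbs $h_0$ to $h_0+\frac1n$ and passes to the limit using uniqueness.
\item Closedness at $\mu^*$: the paper takes a monotone limit along $\mu_n\uparrow\mu^*$, using the uniform interior and boundary estimates, and identifies the limit by uniqueness.
\end{itemize}
You need to supply these arguments.
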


When spreading happens, more precise descriptions of the spreading profile of the solution are considered in a separate work.

\vspace{0.5em}

The most difficult part of this work lies in the proof of Theorem \ref{teo 1}, which relies on a  new approach, strikingly different from those for the related free boundary models \eqref{nonlocal} and \eqref{A}. The key  is an approximation process, which  appears in Section 2, where a related linear fractional parabolic problem over a region with  curved boundaries is treated. This approximation approach is based on two existing results for the fractional Laplacian, one is the interior estimate in {\cite{FR2017}, the other is a boundary estimate with the constant in the bound independent of the boundary (a consequence of the heat kernel estimate of \cite{CKS}), which is a key fact that enables the idea to work for the associated semilinear problem treated in Section 3 and for the free boundary problem treated in Section 4. The uniqueness proof in Section 4 is also very different from the existing ones for \eqref{A} and \eqref{nonlocal}, where the Banach contraction mapping theorem is the core; here the uniqueness proof is based on an entropy function (see Section 4.2).

The proof of Theorems \ref{teo 2.2} and \ref{teo 2} are given in Section 5, where existing ideas for \eqref{A} and \eqref{nonlocal} are combined with techniques developed specifically for \eqref{eq 1.1} here to prove the desired conclusions.

 \setcounter{equation}{0}
\section{Linear  problem with given curved boundaries}

%In this section we show  the heat kernel under the Dirichlet boundary condition
For convenience, we first introduce some notations. For given $h_0\in(0,\infty),\,  T\in(0,\infty)$ we define
\begin{align*}
&\mathbb H_{h_0, T}:=\Big\{h\in C([0,T])\!:~h(0)=h_0,
\; h(t) \mbox{ is nondecreasing in } [0,T] \Big\},\\
&\mathbb G_{h_0, T}:=\Big\{g\in C([0,T])\!:-g\in\mathbb{H}_{h_0, T}\Big\},\\
& \cO_t:=(g(t),h(t)) \mbox{ for } t\in [0, T],\\
& C_0(\bar \cO_0):=\Big\{u\in C(\R)\!: \ u(x)=0 \mbox{ for } x\in\R\setminus \cO_0\Big\},\\
&\mathcal I_0(\cO_0):=\Big\{u\in C_0(\bar \cO_0)\!: u\gneqq  0 \mbox{ in } \cO_0\Big\}.
\end{align*}
  For $t>0$ and $x\in \cO_t$, we define
 $$ \rho(t,x):=d(x, \partial \cO_t)=\min\big\{h(t)-x,\, x-g(t)\big\}.$$
Moreover, for $g\in \mathbb G_{h_0, T}$, $h\in\mathbb H_{h_0, T}$ and  nonnegative $u_0\in C_0(\bar\cO_0)$, we define
\begin{align*}
&\Omega_{g, h}=\Omega_{g, h,T}:=\left\{(t,x)\in\mathbb{R}^2: 0<t\leq T,~g(t)<x<h(t)\right\},\\
&\mathbb{X}=\mathbb X_{u_0,g,h}:=\Big\{\phi\in C(\overline\Omega_{g,h})~:~\phi\ge0~\text{in}
~\Omega_{g,h},~\phi(0,x)=u_0(x)~\text{for}~x\in [-h_0,h_0]~\\
& \hspace{6cm} \text{and} \;\; \phi(t,g(t))=
\phi(t,h(t))=0~\ \text{for }\ 0\le t\le T\Big\}.
\end{align*}

For given $g\in \mathbb G_{h_0, T}$, $h\in\mathbb H_{h_0, T}$ with $0<T<\infty$,  $  \eta \in C^{\frac{\alpha}{2s},\alpha}(  \Omega_{g, h}) \cap L^{\infty}(  \Omega_{g, h})$, $0<\alpha<1$, and $w_0\in C_0(\bar\cO_0)$,
consider the Cauchy problem
\begin{equation}\label{eq 2.1-h}
\left\{
\begin{array}{lll}
\partial_t w +d(-\Delta)^s  w =\eta  \quad &{\rm for}\ \,   (t,x)\in \Omega_{g,h},\\[3mm]
\qquad\quad\   w(t,x)=0  \quad  &{\rm for}\ \, (t,x)\in \big((0,T]\times \R\big) \setminus \Omega_{g,h},\\[3mm]
\qquad\quad\  w(0,\cdot)=w_0  &{\rm in}\ \    \cO_0 .
\end{array}\right.
 \end{equation}

  \begin{definition}[Classical solution and weak solution]\label{weak sol} Let \( \alpha, s \in (0,1) \) be such that \( \frac{\alpha}{2s} \in (0,1) \) and \( \alpha + 2s \) is not an integer.
\begin{itemize}
	\item[{\rm (i)}]  A function $u$ is called a classical solution of  \eqref{eq 2.1-h}
	if   $u\in C\big([0,T]\times \mathbb R\big)\cap  C_{\loc}^{1+\frac{\alpha}{2s} ,2s+\alpha}(\Omega_{g,h}) $
	satisfies  \eqref{eq 2.1-h} point-wisely. Here  $u\in C_{\rm loc}^{1+\frac{\alpha}{2s} ,2s+\alpha}(\Omega_{g,h})$ means that  for any $Q\Subset \Omega_{g, h}\ ($i.e., $Q$ is a compact subset of $\Omega_{g, h})$,   $$   \|u\|_{C^{1+\frac{\alpha}{2s}}_t(	Q)} + \|u\|_{C^{2s+\alpha}_x(Q)} <\infty. $$

	\item[{\rm (ii)}]  A function $u$ is called a weak solution of  \eqref{eq 2.1-h}
	if the following  hold:\smallskip
	
	\noindent {\rm (a)}  $u \in L^{1}\big( (0, T]; \, L^\infty(\mathbb R)\big) \cap  C\big([0, T];\, L^2(\R)  \big)$  with $u(t,x)=0$ for $x\in \mathbb{R}\backslash \cO_t$ and $t\in(0,T]$ \medskip
	
	\noindent {\rm (b)}  for any $\varphi \in  C^{1, 2s}\big(\overline \Omega_{g,h}\big)$ satisfying $\varphi (t,\cdot)\in C_c^{2s}(\cO_t)$,  and  any $T'\in(0, T]$,
{\small	\begin{equation}\label{def w1}
		\int_{0}^{T^{\prime}}  \int_{\cO_t} \left[- \partial_t \varphi  +   (-\Delta)^s
		\varphi -\eta \right] u \, d x\,  d t  =\int_{\R} \varphi(0, x) w_0(x)\,d x-\int_{\R} u\left(T^{\prime}, x \right)
		\varphi\left(T^{\prime},x \right) d x.
	\end{equation}
	}
	
\end{itemize}
  \end{definition}

 %For $m\geq0$, we denote
%  $$L^\infty\big((0,T]; H^{ ms}_0(\cO_t)\big)= \Big\{u\in L^2\big([0,t]\times\R \big)\!:\,  \sup_{\tau\in(0,T]} \|u(\tau,\cdot)\|_{sm, \cO_\tau}<+\infty\Big\}. $$

 \begin{proposition}[Existence and estimates of solutions to  \eqref{eq 2.1-h}]\label{KG lm 1}  Let \( \alpha \in (0,1) \) and $s\in(0,1)$ be such that \( \frac{\alpha}{2s} \in (0,1) \) and \( \alpha + 2s \) is not an integer.
Assume that $T \in(0,+\infty) $,     $(g,h)\in\bG_{h_0, T}\times \bH_{h_0, T}$,    $w_0\in  \mathcal I_0 (\cO_0)$,   $  \eta \in C^{\frac{\alpha}{2s},\alpha}_{\rm loc}(  \Omega_{g, h}) \cap L^{\infty}(  \Omega_{g, h})$ and $\eta\geq 0$.
Then
 problem \eqref{eq 2.1-h}
 admits a unique classical  solution $u\in C\big([0, T]\times \R  \big)\cap  C_{\loc }^{1+\frac{\alpha}{2s} ,2s+\alpha}(\Omega_{g,h})$, and it satisfies
   \begin{align}\label{ab-2}
	 {\bf (boundary\ estimates)} \quad 0\leq u(t,x) \leq C_0(1\land \frac{ \rho^s (t,x)}{\sqrt{t}}) \quad \text{for } t\in (0,T],\ x \in (g(t),h(t)),
   \end{align}
   where the constant  $C_0>0$ depends only on $\|\eta\|_\infty,\ \|w_0\|_\infty$,\ $s$ and $T$.

 \end{proposition}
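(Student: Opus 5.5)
I will follow the approximation strategy described in the introduction: replace the curved boundaries $g,h$ by step functions, solve on each time slab with a fixed spatial interval, and pass to the limit. Fix $n\in\N$, set $t_k=kT/n$, and define the step functions $h_n(t):=h(t_{k+1})$ and $g_n(t):=g(t_{k+1})$ for $t\in(t_k,t_{k+1}]$. Since $h$ is nondecreasing and $g$ is nonincreasing, the intervals $\cO^n_k:=(g(t_{k+1}),h(t_{k+1}))$ are nested and increasing in $k$, and $\cO_t\subset \cO^n_{k}$ for $t\in(t_k,t_{k+1}]$. On each slab we solve the fractional heat equation with fixed Dirichlet domain $\cO^n_k$, source $\eta$ (extended suitably, e.g.\ by a bounded H\"older extension, outside $\Omega_{g,h}$), and initial datum equal to the terminal value of the previous slab. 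The initial datum is admissible because it vanishes outside $\cO^n_{k-1}\subset\cO^n_k$. Existence on each slab with a fixed interval follows from the standard theory: Duhamel's formula with the Dirichlet heat kernel $p_{\cO}(t,x,y)$. Interior $C^{1+\frac{\alpha}{2s},2s+\alpha}$ regularity comes from \cite{FR2017}. Concatenating the slabs gives $u_n\ge0$, since $\eta\ge0$ and $w_0\ge0$.

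The key uniform estimates come from the Dirichlet heat kernel bounds of \cite{CKS}. For an interval $D$,
\[
p_D(t,x,y)\le C\Big(1\land \frac{\rho_D^s(x)}{\sqrt t}\Big)\Big(1\land \frac{\rho_D^s(y)}{\sqrt t}\Big)\Big(t^{-\frac1{2s}}\land \frac{t}{|x-y|^{1+2s}}\Big),
\]
and the constant depends only on $s$ and an upper bound of $T$ (and the diameter), not on the particular interval. Using this together with $\int p_D\,dy\le 1$, the Duhamel representation, and the semigroup property across slabs, I will derive $0\le u_n\le \|w_0\|_\infty+T\|\eta\|_\infty$ and
\[
u_n(t,x)\le C_0\Big(1\land \frac{\rho_n^s(t,x)}{\sqrt t}\Big),
\]
where $\rho_n$ is the distance to $\partial\cO^n_k$. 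This is the main obstacle. The boundary factor must survive both the concatenation across slabs and the time integral in Duhamel's formula with a constant independent of $n$. I would handle it by comparison: on the last slab, $u_n$ equals the Dirichlet evolution in $D=\cO^n_k$ over time $t-t_k$ of a bounded datum, plus the source term. That is not sufficient when $t-t_k$ is small. Instead, I will compare $u_n$ with the solution $v$ on the fixed interval $D=\cO^n_k$, starting from time $0$ with datum $w_0$ and source $\eta$. By the maximum principle and domain monotonicity ($p_{D'}\le p_D$ for $D'\subset D$), $u_n\le v$ on $(0,t]$. Then $v$ obeys the \cite{CKS} bound with factor $1\land \rho_D^s(x)/\sqrt t$. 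For the source part I use $\int_0^t (1\land\rho^s/\sqrt{t-\tau})\,d\tau\le C\sqrt t\,(1\land \rho^s/\sqrt t)$, with $C$ allowed to depend on $T$.

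Passing to the limit: as $n\to\infty$ we have $\cO^n_k\to\cO_t$ and $\rho_n\to\rho$ uniformly, because $g,h$ are uniformly continuous. The interior estimates of \cite{FR2017} on compact $Q\Subset\Omega_{g,h}$, which for large $n$ lies inside the approximating regions where the equation holds, give uniform $C^{1+\frac{\alpha}{2s},2s+\alpha}(Q)$ bounds. The nonlocal tail is controlled by the $L^\infty$ bound. Arzel\`a--Ascoli and a diagonal argument yield a subsequence converging to $u$ in $C^{1,2s}_{\loc}$, so $u$ solves \eqref{eq 2.1-h} pointwise and inherits \eqref{ab-2}. The boundary bound gives $u\to0$ uniformly at the lateral boundary and, for $t>0$, outside $\Omega_{g,h}$. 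For continuity at $t=0$, the bound $|u_n(t,\cdot)-P^{\R}_t w_0|$ is small uniformly in $n$, by comparing with the free fractional heat flow and using $w_0\in C_0(\bar\cO_0)$. Hence $u\in C([0,T]\times\R)$.

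Uniqueness comes from the maximum principle for classical solutions. The difference $w$ of two solutions is continuous on $[0,T]\times\R$, vanishes outside $\Omega_{g,h}$ and at $t=0$, and satisfies $\partial_t w+d(-\Delta)^s w=0$ inside. At a positive maximum point of $e^{-t}w$ in $\Omega_{g,h}$, the quantity $(-\Delta)^s w$ is nonnegative while the time derivative is $\ge0$, which gives a contradiction. The same holds for $-w$.
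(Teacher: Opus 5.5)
Your proposal is correct and has the same architecture as the paper's proof. Both arguments replace $(g,h)$ by step functions and solve slab by slab with the Dirichlet heat kernel. Both get an $n$-independent boundary bound by dominating $u_n$ on $(0,t]$ by the solution on the rectangle over the current (largest) interval, then applying \cite{CKS} exactly as in Lemma \ref{lem-I}. Both then use \cite{FR2017} for interior bounds, pass to the limit, and get uniqueness from the maximum principle.

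The one real difference is the direction of approximation. The paper approximates from inside, using $\cO(t_{n,i})$ on $(t_{n,i},t_{n,i+1}]$ with dyadic times, so $\cQ_n\subset\cQ_{n+1}\subset\Omega_{g,h}$. This gives several things for free:
\begin{itemize}
\item $\eta$ needs no extension;
\item each $u_n$ is classical on $\cQ_n$, so Lemma \ref{lm comp-1} applies verbatim;
\item $u_n$ vanishes off $\Omega_{g,h}$ and $\rho_n\le\rho$;
\item since $\eta,w_0\ge0$, $u_n$ is nondecreasing in $n$, so the whole sequence converges without extraction.
\end{itemize}
Your outer approximation by $h(t_{k+1}),g(t_{k+1})$ needs neither nested partitions nor monotonicity in $n$, because you use compactness plus uniqueness. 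The price is that you must extend $\eta$, and you must use $\rho_n\to\rho$ uniformly to see that the limit vanishes outside $\Omega_{g,h}$. You address both.

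Three small repairs are needed.
\begin{itemize}
\item A bounded H\"older extension of $\eta$ need not exist. $\eta$ is only locally H\"older in $\Omega_{g,h}$ and may oscillate near the lateral boundary. Extend by $0$ instead; this suffices because H\"older continuity is only used on compact subsets of $\Omega_{g,h}$. Your comparisons must then go through Duhamel's formula (positivity, domain monotonicity and the semigroup property of the Dirichlet kernels), as you indicate. The reason is that $u_n$ is no longer classical on the part of your approximating region outside $\Omega_{g,h}$.
\item At $t=0$ the free flow gives only the upper bound $u_n(t,\cdot)\le P^{\R}_t w_0+t\|\eta\|_\infty$. For the matching lower bound, use the Dirichlet evolution on $\cO_0$ with zero source. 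It lies below every $u_n$, since all your intervals contain $\cO_0$ and $\eta\ge0$, and it tends to $w_0$ uniformly by Lemma \ref{lem-I}.
\item Drop the possible dependence on the diameter. The upper constant $c_2$ in \eqref{hk-est1} is independent of the interval (Remark \ref{rm-hk}). This also follows from parabolic scaling together with exponential decay for large times on a unit interval. It is what makes $C_0$ depend only on $\|\eta\|_\infty$, $\|w_0\|_\infty$, $s$, $T$, as the statement claims. The paper relies on this later when choosing $R$ in the fixed-point argument.
\end{itemize}
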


 To prove Proposition \ref{KG lm 1}, some preparations are needed.
 We start with a maximum principle. For  $T\in(0,\infty)$, let $ \bO_t$ with $t\in[0,T]$   be a bounded interval for $x$ such that
 $$\bO_{t}\subset \bO_{t'} \quad {\rm if}\ \, 0\leq t\leq t'\leq T$$
 and
$$
 \cQ_T=\big\{(t,x)\in\R^2\!:\, t\in (0,T],\,   x\in \bO_t  \big\}.
$$
 \begin{lemma}[Maximum principle]\label{lm comp-1}
 Let $s\in(0,1)$,  $T\in(0,+\infty) $,   $u \in
C(\overline\cQ_T )$ and for every $t\in (0, T]$, $u(t,\cdot)\in  L^1(\R, \frac{dx}{(1+|x|)^{1+2s}})$.
Suppose $\partial_tu, (-\Delta)^s u \in  C\big( \cQ_T \big)$ and 
\begin{equation}\label{eq 2.2-stronger+}
\left\{ \arraycolsep=1pt
\begin{array}{lll}
\displaystyle \partial_t u+  (-\Delta)^s  u+{C}(t,x)u\geq 0\quad \  &{\rm in}\ \      \cQ_T,\\[2mm]
 \phantom{ (-\Delta)^s -- \  \,  }
\displaystyle   u  \geq 0 \quad \ &{{\rm in}}\  \  \big((0,T]\times  \R \big)  \setminus   \cQ_T, \\[2mm]
 \phantom{ (-\Delta)^s   \   \, }
 u(0,\cdot) \geq 0 \quad \ &{{\rm in}}\  \ \bO_0
\end{array}
\right.
\end{equation}
in the point-wise sense, where  $\|{C} \|_{\infty}<\infty$.
Then $ u\geq 0 \ {\rm in}\ \, \cQ_T.$
Furthermore,
$u>0\ {\rm in}\    \cQ_T $
if 	
\begin{equation}\label{side}
u(t,\cdot) \gneqq0 \mbox{ in  }  \R\setminus \bO_t  \mbox{ for every } t\in (0, T],
\end{equation}
or
 \begin{equation}\label{t=0}
 \mbox{$u(0, \cdot)\gneqq0$ in $\bO_0$.}
 \end{equation}
  \end{lemma}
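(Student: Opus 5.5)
We argue by contradiction with an exponential change of unknown. Set $\lambda:=\|C\|_\infty+1$ and $v(t,x):=e^{-\lambda t}u(t,x)$. Then $v$ has the same regularity as $u$, the same signs outside $\cQ_T$ and at $t=0$, and in $\cQ_T$
\[
\partial_t v+(-\Delta)^s v+(C+\lambda)v\ge 0,\qquad C+\lambda\ge 1 .
\]
Suppose $\min_{\overline\cQ_T}v<0$. Since $v\in C(\overline\cQ_T)$ and $\overline\cQ_T$ is compact, the negative minimum $m<0$ is attained at some $(t_0,x_0)\in\overline\cQ_T$. We first place this point inside $\cQ_T$ at a positive time.

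If $t_0=0$, then $v(0,x_0)=u(0,x_0)\ge0$ whenever $x_0\in\bO_0$. If $x_0\in\partial\bO_0$, continuity together with $u\ge0$ outside the region gives $v(0,x_0)\ge0$. If $(t_0,x_0)$ lies on the lateral boundary, i.e.\ $x_0\in\partial\bO_{t_0}$ with $t_0>0$, then $x_0$ is a limit of points in $\R\setminus\bO_{t_0}$, where $v(t_0,\cdot)\ge0$, so $v(t_0,x_0)\ge0$. Here we use that $u(t_0,\cdot)$ is continuous on $\R$, which I will assume, as is implicit in the setting. Hence $t_0\in(0,T]$ and $x_0\in\bO_{t_0}$.

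At such a point, $\partial_t v(t_0,x_0)\le 0$: this is a one-sided derivative from the left when $t_0=T$, and we use monotonicity of $\bO_t$, which gives $x_0\in\bO_t$ for $t$ slightly less than $t_0$, so that $v(t,x_0)\ge m$ there. Moreover $v(t_0,\cdot)\ge m$ on all of $\R$, since it is $\ge0>m$ outside $\bO_{t_0}$. The integral formula (using the $L^1$ weight assumption for convergence) then gives
\[
(-\Delta)^s v(t_0,x_0)=c_{1,s}\,\mathrm{P.V.}\!\int\frac{m-v(t_0,y)}{|x_0-y|^{1+2s}}\,dy\le0 .
\]
Consequently $\partial_t v+(-\Delta)^s v+(C+\lambda)v\le (C+\lambda)m\le m<0$ at $(t_0,x_0)$, a contradiction. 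So $u\ge0$.

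For strict positivity, suppose $u(t_0,x_0)=0$ for some $(t_0,x_0)\in\cQ_T$. Then $(t_0,x_0)$ is a minimum point of $v$ on $[0,t_0]\times\R$, so $\partial_tv\le0$. The inequality then forces
\[
0\le (-\Delta)^s v(t_0,x_0)=-c_{1,s}\int\frac{v(t_0,y)}{|x_0-y|^{1+2s}}\,dy\le0,
\]
hence $v(t_0,\cdot)\equiv0$ on $\R$.

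Under \eqref{side} this is immediately contradicted, since $u(t_0,\cdot)\gneqq0$ outside $\bO_{t_0}$.

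Under \eqref{t=0}, we need to propagate backward in time, and this is the main obstacle. From $u(t_0,\cdot)\equiv0$ we must deduce $u(t,\cdot)\equiv0$ for all $t<t_0$, and then contradict $u(0,\cdot)\gneqq0$. The first thing I would try is the following. Let $t_1:=\inf\{t\le t_0: u(t,\cdot)\equiv0 \text{ on } \bO_t\}$, and suppose $t_1>0$. Since $u(t,\cdot)\not\equiv0$ for $t<t_1$, there is a point where $u>0$ at slightly earlier times. The idea is to compare $u$ from below with a subsolution of the form $\varepsilon e^{-Kt}\phi(x)$, where $\phi$ is a first Dirichlet eigenfunction of $(-\Delta)^s$ on a small interval centered at that point; this keeps $u$ positive up to time $t_1$, a contradiction.

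A cleaner alternative avoids the infimum argument. Apply the weak comparison just proved to $u-\varepsilon e^{-Kt}\phi$ on $[0,t_0]$, with $\phi$ a Dirichlet eigenfunction of a small interval $I\Subset\bO_0$ on which $u(0,\cdot)$ is positive, $\varepsilon$ small, and $K\ge\lambda_1(I)+\|C\|_\infty$. Since $I\subset\bO_t$ for all $t$ by monotonicity, the comparison is valid and gives $u(t_0,\cdot)\ge\varepsilon e^{-Kt_0}\phi>0$ on $I$. This contradicts $u(t_0,\cdot)\equiv0$.
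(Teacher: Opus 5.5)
Your proof is correct, and for the case \eqref{t=0} it takes a genuinely different route from the paper.

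The weak principle and the case \eqref{side} follow the paper. Both use an exponential shift and then derive a contradiction at a negative minimum, resp.\ at a zero, from the sign of the nonlocal integral. You get strictness from $C+\lambda\ge 1$, whereas the paper gets it from the exterior contribution $\int_{\R\setminus\bO_{t_0}}u(t_0,x_0)|x_0-y|^{-1-2s}dy<0$. You are also more careful than the paper in placing the minimum inside $\cQ_T$. Two small corrections in that part. First, continuity of $u(t_0,\cdot)$ on $\R$ is not needed: for an open interval $\bO_{t_0}$, a point of $\partial\bO_{t_0}$ already lies in the exterior set where $u\ge 0$ is assumed. Second, monotonicity puts $x_0$ in $\bO_t$ for $t$ slightly \emph{larger} than $t_0$, not smaller. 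You don't need that claim, since $v\ge m$ on all of $(0,T]\times\R$ anyway.

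The two routes for \eqref{t=0} compare as follows.
\begin{itemize}
\item The paper solves an auxiliary problem $\partial_t\bar u+(-\Delta)^s\bar u+C\bar u=0$ on the fixed cylinder $(0,T]\times\bO_0$ with data $u(0,\cdot)$, and gets $u\ge\bar u$ from the weak principle. It then tests against the principal eigenfunction to obtain $\int_{\bO_0}\bar u(t,\cdot)\varphi_{s,1}\ge e^{-(\lambda_{s,1}(\bO_0)+M)t}\int_{\bO_0}u(0,\cdot)\varphi_{s,1}>0$, and upgrades this to $\bar u>0$ pointwise. Finally it restarts the argument at each $t_0\in(0,T)$ to reach points of $\cQ_T$ outside $(0,T]\times\bO_0$.
\item Your explicit subsolution $\varepsilon e^{-Kt}\phi_I$ avoids the solvability and regularity theory of that auxiliary problem with a merely bounded coefficient $C(t,x)$, which the paper takes for granted. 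It also avoids the integration by parts against $\varphi_{s,1}$.
\item Because a single zero anywhere in $\cQ_T$ already forces $u(t_0,\cdot)=0$ a.e.\ on $\R$, positivity on one fixed small interval $I$ suffices in your argument. So the restart step disappears; you exploit nonlocality more fully.
\end{itemize}

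One technical point to tidy. Since $\phi_I\asymp d_I^s$ near $\partial I$, one has $(-\Delta)^s\phi_I=-\infty$ at the endpoints of $I$, and these points lie inside $\cQ_T$. So $u-\varepsilon e^{-Kt}\phi_I$ does not meet the hypothesis $(-\Delta)^s(\cdot)\in C(\cQ_T)$ of the weak principle you invoke.

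The simplest repair is a smooth bump. Take $\psi\in C_c^\infty(\bO_0)$ with $\{\psi>0\}=(a,b)$ and $[a,b]\subset\bO_0\cap\{u(0,\cdot)>0\}$.
\begin{itemize}
\item $(-\Delta)^s\psi$ is continuous and equals $-c_{1,s}\int\psi(y)|x-y|^{-1-2s}dy<0$ on $\R\setminus(a,b)$, so it is negative on a neighbourhood of $\R\setminus(a,b)$.
\item On the remaining compact part of $(a,b)$, $\psi$ is bounded below by a positive constant.
\item Hence $(-\Delta)^s\psi\le K\psi$ on $\R$ for some $K$, and $\varepsilon e^{-(K+\|C\|_\infty)t}\psi$ is an admissible smooth subsolution.
\end{itemize}
Alternatively, you can keep $\phi_I$ and observe directly that a negative minimum of $u-\varepsilon e^{-Kt}\phi_I$ cannot sit on $\partial I$. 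There the truncated nonlocal integrals would be nonpositive, yet they tend to $+\infty$.
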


\noindent{\bf Proof.}  Fix $M\geq \|C\|_\infty$ and let $\tilde u:=ue^{-M t}$.  Then $\tilde u$  satisfies
\begin{align*}
	\displaystyle \partial_t  \tilde u+  (-\Delta)^s  \tilde u+[C(t,x)+M]\tilde u=e^{-Mt}[\partial_t u+  (-\Delta)^s  u+{C}(t,x)u ]\geq 0\quad \  {\rm in}\ \      \cQ_T.
\end{align*}
Hence we may assume that  $C\geq 0$.

To prove the conclusion, we employ a contradiction argument. Assume that there exists $(t_0,x_0)\in \cQ_{T} $  such that
$$u(t_0,x_0)=\min_{(t,x)\in  \overline\cQ_{T}}u(t,x)< 0,$$
then  $u(t , x)\geq u(t_0,x_0)$ for $(t,x)\in[0,T]\times \R$ and $t_0>0$.
By direct computation, we have  $\partial_tu(t_0, x_{0})\leq 0$  and
 \begin{align*}
 (-\Delta)^s u(t_0, x_{0})&=c_{1,s} \int_{\R}\frac{u(t_0, x_{0})-u(t_0,y)}{|x_0-y|^{1+2s}}  \, d y     \leq c_{1,s} \int_{\R\setminus    \cO_{t_0}} \frac{ u(t_0, x_{0}) }{|x_0-y|^{1+2s} }   d y < 0,
 \end{align*}
hence
$$\partial_tu(t_0, x_{0})+  (-\Delta)^s  u(t_0, x_{0})+{C} u(t_0,x_0)<0,$$
which contradicts  the assumption.
Thus $u$ is nonnegative in $\overline \cQ_{T}$.

We next prove that $u>0$ in $\cQ_T$ if \eqref{side} or \eqref{t=0} holds. Suppose that  \eqref{side} holds and we proceed by an indirect argument.
So we assume that $u(t_0,x_0)=0$ for some $x_0\in \bO_{t_0}$ and $t_0\in (0, T]$.
Then   $\partial_t u(t_0, x_{0})\leq 0$ and
$$
 (-\Delta)^s   u(t_0, x_{0})= -c_{1,s} \int_{\R} \frac{  u(t_0, x_0+y)}{|y|^{1+2s} }  dy < 0,
$$
since  $u(t_0,\cdot) \gneqq   0$ in $\R\setminus \bO_{t_0}$ by \eqref{side}. It follows that
$$0\leq \partial_tu(t_0, x_{0})+  (-\Delta)^s  u(t_0, x_{0})+{C} u(t_0,x_0)<0,$$
which is the desired contradiction.

It remains to consider the case \eqref{t=0}.
In this case, we consider the solution $\bar u$ of the following auxiliary problem
 \begin{equation}\label{eq 2.2-stronger++}
\left\{ \arraycolsep=1pt
\begin{array}{ll}
\displaystyle \partial_t u+  (-\Delta)^s  u+{C}u=0\quad \  &{\rm in}\ \   (0,T]\times \bO_0,\\[2mm]
\displaystyle   u  = 0  \ &{{\rm in}}\  \    (0,T]\times \big(\R \setminus \bO_0\big), \\[2mm]
  u(0,\cdot) =u_0  \ &{{\rm in}}\  \ \bO_0.
\end{array}
\right.
\end{equation}
The just proved part of the comparison principle implies that
$$u\geq \bar u\geq 0\quad{\rm in}\ \, (0,T]\times \bO_0. $$

We show  that $\bar u>0$ in  $(0,T]\times \bO_0$. Let $\lambda_{s,1}(\bO_0)$ be the principal eigenvalue of $(-\Delta)^s$ over $\bO_0$ with Dirichlet boundary conditions, and $\varphi_{s,1}\geq 0$ an associated eigenfunction.
Multiplying the first equation in \eqref{eq 2.2-stronger++} by $e^{Mt}\varphi_{s,1}$ and integrating over $\bO_0$, we obtain, after a simple manipulation,
 \begin{align*}
0 &=   \partial_t\int_{\bO_0}e^{Mt}\bar u (t,x) \varphi_{s,1}(x)dx+\int_{\bO_0}e^{Mt}\varphi_{s,1}(x)(-\Delta)^s\bar u (t,x) dx\\
&\ \ \ \ \ \ +\int_{\bO_0}e^{Mt}[C(t,x)-M]\varphi_{s,1}(x)\bar u (t,x) dx
\\
&  \leq \partial_t\int_{\bO_0} e^{Mt}\bar u (t,x) \varphi_{s,1}(x)dx+\lambda_{s,1}(\bO_0)\int_{\bO_0}e^{Mt}\bar u (t,x) \varphi_{s,1}(x) dx,
  \end{align*}
which implies
$$\int_{\bO_{0}} \bar u (t,x) \varphi_{s,1}(x)dx\geq e^{-t \lambda_{s,1}(\bO_0)-tM} \int_{\bO_0}u_0(x) \varphi_{s,1}(x)dx>0, $$
and so $\bar u(t,\cdot)\gneqq 0$  in $\bO_0$ for every $t\in (0, T]$.

If there exists $(t_0, x_0)\in (0,T]\times \bO_0$ such that $\bar u (t_0,x_0)=0$, then $\partial_t\bar u (t_0,x_0)\leq 0$ and so $(-\Delta)^s\bar u(t_0,x_0)=-\partial_t \bar u(t_0,x_0)\geq 0$. However, the definition of $(-\Delta)^s$ implies
 \begin{align*}
	(-\Delta)^s \bar u(t_0, x_{0})&=c_{1,s} \int_{\R}\frac{ \bar u(t_0, x_{0})- \bar u(t_0,y)}{|x_0-y|^{1+2s}}  \, d y     \leq -c_{1,s} \int_{ \bO_0} \frac{  \bar u(t_0, y) }{|x_0-y|^{1+2s} }   d y < 0,
\end{align*}
which is a contradiction. Hence, $u\geq \bar u>0$ in  $(0,T]\times \bO_0$.

To complete the proof, it remains to show that $u>0$ in
$\cQ_T$. For each $t_0\in (0, T)$, by the proved fact we have $u(t_0,x)>0$ in $\bO_0$. We may now
 apply the above conclusion with $t=0$ replaced by $t=t_0$ to obtain $u(t,x)>0$ in $(t_0, T]\times \bO_{t_0}$. Since $\cup_{t_0\in (0, T)}\left[(t_0, T]\times \bO_{t_0}\right]=\cQ_T$, it follows that $u>0$ in $
\cQ_T$. The proof is now complete.
 \hfill$\Box$\medskip

 %\begin{remark}
 % Lemma \ref{lm comp-1} also holds when $T=\infty$ since in that case the conclusion holds for any $T\in (0, \infty)$.
 %\end{remark}

We also need the following regularity  result of \cite{FR2017}:

 \begin{lemma}\label{lemmajfa} {\rm (\!\cite[Theorems 1.1 and 1.3]{FR2017})}
	Suppose that \( s \in (0,1) \)  and \( u \) is a weak solution to
	\begin{equation*}
		\partial_t u + d(-\Delta)^s u = f \quad \text{in } (0,\sigma_1) \times B_{\sigma_2},
	\end{equation*}
	where \( B_{\sigma_2} = (-\sigma_2, \sigma_2) \). Let \( \alpha \in (0,1) \) be such that \( \frac{\alpha}{2s} \in (0,1) \) and \( \alpha + 2s \) be not an integer, and
	\begin{equation*}
		C_\alpha := \|u\|_{L^{\infty}((0,\sigma_1) \times B_{\sigma_2})} + \|f\|_{C^{\frac{\alpha}{2s},\alpha}((0,\sigma_1) \times B_{\sigma_2})}<+\infty.
	\end{equation*}
	Then, for any $\epsilon\in (0, \sigma_1)$,
	\begin{equation*}
		\|u\|_{C_t^{1+\frac{\alpha}{2s}}((\epsilon,\sigma_1) \times B_{\sigma_2})} + \|u\|_{C_x^{2s+\alpha}((\epsilon,\sigma_1) \times B_{\sigma_2})} \leq C
	\end{equation*}
	for some constant \( C \) depending only on \( \epsilon, \sigma_1, \sigma_2 \), \( s \), \( \alpha \)  and $C_\alpha $.
\end{lemma}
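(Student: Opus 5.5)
The statement is a direct specialization of \cite[Theorems 1.1 and 1.3]{FR2017}, so the plan is to reduce our setting to theirs rather than to reprove the regularity theory. Theorem 1.1 of \cite{FR2017} gives interior space–time H\"older estimates for $\partial_t u + Lu = f$, where $L$ is a general stable operator, on parabolic cylinders $Q_r(t_0,x_0)=(t_0-r^{2s},t_0]\times B_r(x_0)$. Theorem 1.3 gives the higher-order estimates $C^{1+\frac{\alpha}{2s}}_t$ and $C^{2s+\alpha}_x$ when $f\in C^{\frac{\alpha}{2s},\alpha}$; the non-integer condition on $\alpha+2s$ is required there. Taking $L=d(-\Delta)^s$ in one space dimension is an admissible choice: its kernel $d\,c_{1,s}|y|^{-1-2s}$ is symmetric, homogeneous and elliptic with constants depending only on $s$ and $d$.

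\textbf{Step 1 (localization by covering and scaling).} Fix $\epsilon\in(0,\sigma_1)$. Cover $(\epsilon,\sigma_1)\times B_{\sigma_2}$ by finitely many cylinders $Q_r(t_0,x_0)$ with $r^{2s}\le \epsilon/2$. The number of cylinders and the radius $r$ depend only on $\epsilon,\sigma_1,\sigma_2,s$. The translation $(t,x)\mapsto(t_0+r^{2s}t,\ x_0+rx)$ maps each cylinder to the unit cylinder $Q_1$ and preserves the equation, with $f$ replaced by $r^{2s}f$. The estimates on $Q_1$ from \cite{FR2017} then scale back with constants depending only on $r$. Summing over the finite cover gives the seminorms on the whole set $(\epsilon,\sigma_1)\times B_{\sigma_2}$.

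\textbf{Step 2 (the nonlocal tail).} The estimates of \cite{FR2017} are stated with $\|u\|_{L^\infty}$ taken over a time interval times all of $\R$. Our constant $C_\alpha$ only involves the norm on $(0,\sigma_1)\times B_{\sigma_2}$. In every application in this paper, $u$ vanishes outside the bounded interval $\cO_t$ and is bounded there, so the local $L^\infty$ norm is in effect the global one. I would make this explicit with a cutoff. Take $\chi\in C_c^\infty$ with $\chi\equiv1$ near the cylinders of the cover, and write $u=\chi u+(1-\chi)u$. Then $\chi u$ solves the same type of equation with source
\[
f-d(-\Delta)^s\big((1-\chi)u\big).
\]
On the support of $\chi$ the added term is an integral with a smooth, bounded kernel, since the support is at positive distance from that of $1-\chi$. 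It is therefore smooth in $x$ and controlled by $\|u\|_\infty$.

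\textbf{Step 3 (bootstrap, the main obstacle).} The expected difficulty is the time regularity of the tail term. Theorem 1.3 needs the source in $C^{\frac{\alpha}{2s}}_t$, so the tail term must also be H\"older in $t$. I would bootstrap as follows.
\begin{itemize}
\item First apply Theorem 1.1, which needs only boundedness of $u$ and of $f$, to get $u\in C^{\gamma}$ in space–time on a slightly larger set, for some small $\gamma>0$.
\item This makes the tail term $C^{\gamma/2s}$ in $t$. Applying Theorem 1.3 with exponent $\min(\gamma,\alpha)$ improves the time regularity of $u$.
\item Iterate finitely many times until the exponent reaches $\alpha$. Each iteration shrinks the cylinders slightly, and this is absorbed by the margin left in Step 1.
\end{itemize}
A further delicate point is the lateral boundary of $B_{\sigma_2}$. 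If the equation really holds only on $B_{\sigma_2}$, an interior estimate cannot reach $x=\pm\sigma_2$. In that case I would apply the argument on a slightly larger ball where the equation holds, or, as in the applications below, interpret the bound on any compactly contained subcylinder. Either way the constant depends only on $\epsilon,\sigma_1,\sigma_2,s,\alpha$ and $C_\alpha$.
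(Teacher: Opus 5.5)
You are right to treat the lemma as a reduction to \cite{FR2017}; the paper itself gives no argument beyond that citation. Your Step 1 (translation, parabolic scaling, finite covering) is correct, and so are your remarks that the bound cannot reach $x=\pm\sigma_2$ and that some global control of $u$ is needed. You have the two citations interchanged, though: in this paper \cite[Theorem 1.3]{FR2017} is the $L^\infty$ estimate and \cite[Theorem 1.1]{FR2017} is the Schauder estimate.

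The genuine gap is Step 3. After the cutoff, the extra source on $\{\chi=1\}$ is $d\,c_{1,s}\int_{\R}(1-\chi(y))\,u(t,y)\,|x-y|^{-1-2s}\,dy$. Its regularity in $t$ is that of $t\mapsto u(t,y)$ for \emph{every} $y$ with $\chi(y)<1$, in particular for all $|y|\ge\sigma_2$, where no equation holds and nothing is assumed. Your first bullet gives H\"older continuity of $u$ only on compact subsets of the region where the equation holds. It therefore says nothing about these exterior values, and the bootstrap never starts. In the applications the same obstruction sits in the strips near the moving boundaries $x=g(t)$, $x=h(t)$, where $u\neq0$ but interior estimates are unavailable.

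No better argument can repair this. The estimate is false without time regularity of the exterior values, even when $\sup|u|$ is controlled on all of $(0,\sigma_1)\times\R$. Here is a counterexample.
\begin{itemize}
\item Take $\psi\in C_c^\infty(\R)$, $\psi\ge0$, $\psi\not\equiv0$, supported at positive distance from $\bar B_{\sigma_2}$.
\item Put $k(x):=d\,c_{1,s}\int_{\R}\psi(y)|x-y|^{-1-2s}\,dy$, which is smooth and positive on $\bar B_{\sigma_2}$.
\item Let $v$ solve $\partial_t v+d(-\Delta)^s v=\sin(\omega t)\,k$ in $(0,\sigma_1)\times B_{\sigma_2}$, with $v=0$ outside $B_{\sigma_2}$ and $v(0,\cdot)=0$, and set $u:=v+\sin(\omega t)\psi$.
\item Then $\partial_t u+d(-\Delta)^s u=0$ in $(0,\sigma_1)\times B_{\sigma_2}$ and $|u|\le C$ everywhere. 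Integrating by parts in time in the Duhamel formula gives $\sup|v|=O(1/\omega)$.
\item Suppose the claimed bound held. Interpolating it with $\sup|v|=O(1/\omega)$ gives $d(-\Delta)^s v\to0$ uniformly on $(\epsilon,\sigma_1)\times B_{\sigma_2/2}$. Hence $\partial_t u=\sin(\omega t)k+o(1)$ there, and its $C^{\frac{\alpha}{2s}}$-seminorm in $t$ is at least of order $\omega^{\frac{\alpha}{2s}}$, a contradiction.
\end{itemize}

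This is why the estimates of \cite{FR2017} are stated on a smaller cylinder with a \emph{global} norm of $u$ on the right-hand side; for the Schauder estimate that norm includes H\"older continuity in $t$. The correct version of the lemma puts this global norm into the hypothesis and into $C$, and states the conclusion on $(\epsilon,\sigma_1)\times B_{\sigma_2-\delta}$. It then follows from \cite{FR2017} by your Step 1 alone, with no cutoff or bootstrap. The time regularity of $u$ outside each cylinder must then be checked separately wherever the lemma is applied.
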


Here, following the notations of \cite{FR2017},
\[\begin{cases}
\|f\|_{C^{\frac{\alpha}{2s},\alpha}((0,\sigma_1) \times B_{\sigma_2})}=\|f\|_{C^{\frac{\alpha}{2s},\alpha}([0,\sigma_1] \times \overline B_{\sigma_2})},\\[2mm]
\|u\|_{C_t^{1+\frac{\alpha}{2s}}((\epsilon,\sigma_1) \times B_{\sigma_2})} :=\displaystyle \sup_{x\in B_{\sigma_2}}\|u(\cdot, x)\|_{C^{1+\frac{\alpha}{2s}}([\sigma,\sigma_1] \times \overline B_{\sigma_2})},\\[2mm]
\|u\|_{C_x^{2s+\alpha}((\epsilon,\sigma_1) \times B_{\sigma_2})} :=\displaystyle \sup_{t\in (\epsilon, \sigma_1)}\|u(t, \cdot)\|_{C^{2s+\alpha}([\sigma,\sigma_1] \times \overline B_{\sigma_2})}.
\end{cases}
\]

We further require the following result on the Dirichlet heat kernel of $(-\Delta)^s$ over an interval of $\R$.

 \begin{lemma}\label{lem-hk} {\rm (\!\!\cite[Theorem 1.1]{CKS})}
	Suppose that \( s \in (0,1) \) and $I=(l_1, l_2)$ is a bounded interval in $\R$. Then the Dirichlet heat kernel  of $(-\Delta)^s$ over $I$, denoted by
	${\bf H}^s_I(t,x,y)$, has the following estimates:
	\begin{itemize}
	\item[(i)] For each $T>0$, there exist constants $c_1\leq c_2$ such that
	\begin{equation}\label{hk-est1}
	c_1\leq \frac{{\bf H}_I^s(t,x,y)}{\Big(1\land  \frac{d_I(x)^s}{\sqrt t}\Big)\Big(1\land  \frac{d_I(y)^s}{\sqrt t}\Big)\Big(t^{\frac{-1}{2s}}\land \frac{t}{|x-y|^{1+2s}}\Big)}\leq  c_2 \mbox{ for } t\in (0, T],\ x,y\in I,
	\end{equation}
	where we  have used the notations $d_I(x):=\min_{z\in\partial I}|x-z|$ and $u\land v:=\min\{u, v\}$.
	\item[(ii)] For each $T>0$, there exist constants $c_1\leq c_2$ such that
	\begin{equation}\label{hk-est2}
	c_1\leq \frac{{\bf H}^s_I(t,x,y)}{e^{-\lambda^s_1 t/|I|^{2s} }d_I(x)^sd_I(y)^s}\leq c_2 \mbox{ for } t\geq T,\ x,y\in I,
	\end{equation}
	where   $\lambda^s_1$ is the first Dirichlet eigenvalue of $(-\Delta)^s$ on $[0,1]$.
	\end{itemize}
	\end{lemma}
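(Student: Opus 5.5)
This statement is quoted from \cite[Theorem 1.1]{CKS}, so in the paper it is simply invoked. If I had to justify it myself, I would derive (i) from the general Chen--Kim--Song theory and then deduce (ii) from (i) by a spectral argument. I assume, as the statement implicitly allows, that $c_1,c_2$ may depend on $s$, $T$ and $I$.

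\emph{Step 1 (part (i)).} I would first observe that a bounded interval $I=(l_1,l_2)$ is a bounded $C^{1,1}$ open set in $\R$. Its boundary is two points, and it satisfies the uniform interior and exterior ball condition with radius $|I|/2$. The two-sided short-time estimate of \cite{CKS} for the Dirichlet heat kernel of $(-\Delta)^s$ on bounded $C^{1,1}$ domains in $\R^N$ then gives \eqref{hk-est1} with $N=1$. The factor $t^{-\frac1{2s}}\land \frac{t}{|x-y|^{1+2s}}$ is the free heat kernel of $(-\Delta)^s$ in $\R$, and the factors $1\land d_I^s/\sqrt t$ encode the survival probability near $\partial I$.

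Scaling clarifies how the constants depend on $I$. Since
\[
{\bf H}^s_I(t,x,y)=r^{-1}{\bf H}^s_{I/r}\big(tr^{-2s},x/r,y/r\big),
\]
one may normalize $|I|=1$. Translation invariance then shows that the constants depend only on $s$ and on $T/|I|^{2s}$.

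\emph{Step 2 (part (ii)).} I would use the eigenfunction expansion
\[
{\bf H}^s_I(t,x,y)=\sum_k e^{-\lambda_k t}\varphi_k(x)\varphi_k(y).
\]
By the same scaling, $\lambda_1(I)=\lambda_1^s/|I|^{2s}$. The first task is to show $\varphi_1\asymp d_I^s$. To do this, write $\varphi_1(x)=e^{\lambda_1T}\int_I {\bf H}^s_I(T,x,y)\varphi_1(y)\,dy$ and apply (i) at time $T$. When $t=T$ and $|x-y|\le |I|$, the bound in (i) reads ${\bf H}^s_I(T,x,y)\asymp d_I(x)^sd_I(y)^s$. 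Since $\varphi_1>0$ is integrable against $d_I^s$, this yields $\varphi_1\asymp d_I^s$.

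\emph{Step 3 (intrinsic ultracontractivity).} For $t\ge T$, I would use the semigroup property
\[
{\bf H}^s_I(t,x,y)=\iint {\bf H}^s_I(\tfrac T2,x,z)\,{\bf H}^s_I(t-T,z,w)\,{\bf H}^s_I(\tfrac T2,w,y)\,dz\,dw .
\]
By (i), each outer kernel is comparable to a product $d_I^sd_I^s$. Hence ${\bf H}^s_I(t,x,y)$ is comparable to
\[
d_I(x)^s\,d_I(y)^s\iint d_I(z)^s\,{\bf H}^s_I(t-T,z,w)\,d_I(w)^s\,dz\,dw .
\]
Replacing $d_I^s$ by $\varphi_1$ (up to constants, by Step 2), the double integral becomes $e^{-\lambda_1(t-T)}\|\varphi_1\|_2^2$. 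This gives \eqref{hk-est2}.

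I expect the main obstacle to be Step 1: proving the sharp boundary factor $1\land d_I^s/\sqrt t$ from scratch requires the boundary Harnack principle and exit-time estimates for the killed stable process. In this plan it is taken directly from \cite{CKS}, and the remaining steps are soft consequences of it.
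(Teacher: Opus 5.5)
Your proposal is correct. It differs from the paper only in how much is outsourced: the paper proves nothing here and quotes both (i) and (ii) from \cite[Theorem 1.1]{CKS}.

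You take only the short-time estimate (i) from \cite{CKS} and derive (ii) from it. At a fixed time, (i) collapses to ${\bf H}^s_I(T,x,y)\asymp d_I(x)^s d_I(y)^s$, which gives $\varphi_1\asymp d_I^s$. The Chapman--Kolmogorov sandwich then propagates this to ${\bf H}^s_I(t,x,y)\asymp e^{-\lambda_1 t}\varphi_1(x)\varphi_1(y)$ for $t\ge T$. This is the standard intrinsic-ultracontractivity argument, and essentially how \cite{CKS} itself obtains (ii). What it buys is the clear statement that the boundary behaviour in (i) is the only deep input. At $t=T$ the middle kernel in your Step 3 is a Dirac mass; either read the identity that way or apply (i) directly there.

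Your scaling remark is arguably more useful to this paper than the lemma as stated. The estimate for ${\bf H}^s_I$ on $(0,T]$ is equivalent, with the same constants, to the estimate for the unit interval on $(0,T/|I|^{2s}]$. Hence both $c_1$ and $c_2$ in (i) can be chosen uniformly over all intervals with $|I|\ge \ell_0>0$. That covers every use of Lemma~\ref{lem-I} in the paper, since all intervals involved there contain $\cO_0$.

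You can go further by combining the scaling with (ii) for the unit interval. There the kernel decays like $e^{-\lambda_1^s t/|I|^{2s}}$, while the comparison function in (i) decays only polynomially in $t/|I|^{2s}$. So $c_2$ is uniform over all bounded intervals. This gives a self-contained proof of the uniformity claimed in Remark~\ref{rm-hk}, which the paper instead attributes to \cite[Theorem 2.4]{CKS}.
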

	\begin{remark}\label{rm-hk}
	By Theorem 2.4 of \cite{CKS}, the constant
	  $c_2$ in Lemma \ref{lem-hk} part {\rm (i)} is independent  of the size of the interval $I$. This fact will be frequently used in this paper.	\end{remark}

	\begin{remark}\label{rm-hk-0}
If $\{(\lambda_{k},\phi_{k})\}$ is the complete normalised sequence  of  Dirichlet eigenvalues  and associated eigenfunctions  of $(-\Delta)^s$ over $I$,
 i.e. they are all the nontrivial solutions of
  \begin{equation}\label{eq 2.1-eig-r}
\left\{
\begin{array}{lll}
\!\! (-\Delta)^s  \phi =\lambda \phi   \ \  &{\rm in}\ \,  I,\\[2mm]
\quad\  \,   \phi=0  \ \ &{\rm in}\ \,   \R \setminus I,
\end{array}\right.
 \end{equation}
 with
 $$
\lambda_{1} <\lambda_{2}  \le \cdots\le \lambda_{k}  \le \lambda_{k+1} \le \cdots,\, \ \ \ \ \int_{I_0}\phi_i\phi_jdx=\begin{cases} 1,& i=j,\\
0, & i\not=j,\end{cases}
$$
then
	$${\bf H}_{I}^s(t,x,y)=\sum_{k=1}^{\infty}e^{-\lambda_{k}t}\phi_{k}(x)\phi_{k}(y). $$
	\end{remark}
	Using Lemma \ref{lem-hk}, we have the following result, which will play a central role in the proof of Proposition \ref{KG lm 1}.

\begin{lemma}\label{lem-I}
	Suppose that $T>0$,  \( s \in (0,1) \), $I=(l_1, l_2)$ is a bounded interval in $\R$, $w_0\in C_0(\bar I)$ and $f\in L^\infty([0,T]\times I)$. Then the solution of
	\begin{equation}\label{eq-I}
\left\{
\begin{array}{lll}
\partial_t w +d(-\Delta)^s  w =f   \quad &{\rm in}\ \,     (0, T]\times I,\\[3mm]
\qquad\quad\,   w(t,x)=0  \quad  &{\rm for}\ \,  (t,x)\in \big((0,T]\times (\R\setminus  \bar I\big) ,\\[3mm]
\qquad\quad\,   w(0,\cdot)=w_0  & {\rm in}\ \,  I
\end{array}\right.
 \end{equation}
satisfies $w\in C([0,T]\times \bar I)$ and
\[
w(t,x)\to w_0(x) \ \mbox{ as } t\to 0^+ \mbox{ uniformly in } \bar I.
\]
Moreover,  there exists some $C_0>0$ depending only on $\|w_0\|_\infty$, $ \|f\|_\infty$, $s$ and $T$  such that
\[
 |w(t,x)|\leq  C_0 (1\land \frac{ d_I(x)^s}{\sqrt{t}}) \  \mbox{ for } t\in (0, T], \ x\in I. \]
 \end{lemma}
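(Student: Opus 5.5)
The plan is to write the solution through the Duhamel formula with the Dirichlet heat kernel of $d(-\Delta)^s$ on $I$. We obtain that kernel from ${\bf H}^s_I$ by rescaling time, ${\bf H}^{s,d}_I(t,x,y)={\bf H}^s_I(dt,x,y)$. The solution is then
\[
w(t,x)=\int_I {\bf H}^{s,d}_I(t,x,y)w_0(y)\,dy+\int_0^t\!\!\int_I {\bf H}^{s,d}_I(t-\tau,x,y)f(\tau,y)\,dy\,d\tau .
\]
Uniqueness within the class of bounded solutions follows from Lemma \ref{lm comp-1}, applied with $C\equiv0$ and $\bO_t\equiv I$ to $\pm(w_1-w_2)$. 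So it suffices to estimate the two terms separately. For both we use the upper bound in Lemma \ref{lem-hk}(i), whose constant $c_2$ does not depend on $|I|$ by Remark \ref{rm-hk}. The time variable is $dt\in(0,dT]$, so $c_2$ depends only on $s$, $d$ and $T$.

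\textbf{Initial-data term.} We drop the factor $1\land d_I(y)^s/\sqrt t$ and keep the factor in $x$, which gives
\[
|w_1(t,x)|\le c_2\|w_0\|_\infty\Big(1\land \frac{d_I(x)^s}{\sqrt{dt}}\Big)\int_{\R}\Big(t^{-\frac1{2s}}\land \frac{t}{|x-y|^{1+2s}}\Big)dy .
\]
The last integral equals a constant depending only on $s$, by the scaling $y-x=t^{1/(2s)}z$. This yields the required bound for $w_1$.

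\textbf{Forcing term.} In the same way,
\[
|w_2(t,x)|\le C\|f\|_\infty\int_0^t\Big(1\land\frac{d_I(x)^s}{\sqrt{t-\tau}}\Big)d\tau\le C\|f\|_\infty\,\min\{t,\;2d_I(x)^s\sqrt t\}.
\]
For $t\le T$ the right-hand side is bounded by $C(T)\big(1\land d_I(x)^s/\sqrt t\big)$: if $d_I(x)^s\ge\sqrt t$ we use $t\le T$, and otherwise $d_I(x)^s\sqrt t=t\cdot d_I(x)^s/\sqrt t\le T\,d_I(x)^s/\sqrt t$. Adding the two estimates gives the stated bound with $C_0$ depending only on $\|w_0\|_\infty$, $\|f\|_\infty$, $s$ and $T$, and in particular not on $I$.

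\textbf{Continuity and the initial trace.} The main obstacle is showing that $w\in C([0,T]\times\bar I)$ and that $w(t,\cdot)\to w_0$ uniformly. First, the bound on $w_2$ already gives $|w_2|\le Ct\to0$ uniformly. Second, interior continuity for $t>0$ comes from the continuity of the heat kernel, which we can see from its eigenfunction expansion in Remark \ref{rm-hk-0}, combined with dominated convergence. Third, continuity up to $\partial I$ with value $0$ follows from the boundary factor $d_I(x)^s/\sqrt t$.

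For the uniform convergence $w_1(t,\cdot)\to w_0$, I would use that $w_0\in C_0(\bar I)$ is uniformly continuous on $\R$. Fix $\epsilon>0$ and choose $\delta$ so that $|w_0(x)-w_0(y)|<\epsilon$ whenever $|x-y|<\delta$. We split into two cases.

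\emph{Case $d_I(x)\le\delta$.} Here $|w_0(x)|\le\epsilon$. Also $|w_1(t,x)|$ is small uniformly in $t$ by a local argument: we compare with $\|w_0\|_{L^\infty(\{d_I<2\delta\})}$ plus the contribution of far points, using the kernel's off-diagonal decay.

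\emph{Case $d_I(x)>\delta$.} We write
\[
w_1(t,x)-w_0(x)=\int_I {\bf H}(w_0(y)-w_0(x))\,dy-w_0(x)\Big(1-\int_I{\bf H}\,dy\Big).
\]
The first integral is small: on $|x-y|<\delta$ by uniform continuity, and on $|x-y|\ge\delta$ by the tail estimate $\int_{|x-y|\ge\delta}t|x-y|^{-1-2s}dy\le Ct\delta^{-2s}$. The survival probability satisfies $1-\int_I{\bf H}\,dy\to0$ uniformly on $\{d_I\ge\delta\}$. To prove this I would compare with the whole-line kernel, for which the probability of exiting within time $t$ from distance $\delta$ is $O(t\delta^{-2s})$. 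Alternatively, I would use the lower bound in Lemma \ref{lem-hk}(i) together with the Lévy-system formula.

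If this probabilistic step proves awkward, a fallback is to approximate $w_0$ uniformly by smooth $\psi\in C_c^\infty(I)$. For such $\psi$, $(-\Delta)^s\psi$ is bounded on $I$, so $\psi\pm Ct$ serve as barriers via Lemma \ref{lm comp-1}. The resulting error is then controlled by $\|w_0-\psi\|_\infty$, using the $L^\infty$ contraction of the semigroup, which also follows from Lemma \ref{lm comp-1}.
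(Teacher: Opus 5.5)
Your estimates follow the paper's route: the Duhamel formula \eqref{hk-sol} plus the $|I|$-independent upper bound of Lemma \ref{lem-hk}(i). Your forcing-term bound is sharper than the paper's. The paper bounds that term by $C_2t^{1/2}d_I(x)^s$, dropping the ``$1\land$'', and then absorbs it into $C_0(1\land d_I(x)^s/\sqrt t)$. When $d_I(x)^s\ge\sqrt t$ this needs $d_I(x)$ bounded, so it quietly makes $C_0$ depend on $|I|$. Your $\min\{t,2d_I(x)^s\sqrt t\}$ avoids this. (After your time rescaling, the last kernel factor should read $(dt)^{-1/(2s)}\land dt\,|x-y|^{-1-2s}$; this is harmless.)

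The genuine difference is the initial trace. The paper proves only that $w(t,x)\to w_0(x)$ for each fixed $x$, using the ``well known'' fact $\int_I{\bf H}^s_I(t,x,y)\,dy\to1$. It then infers joint continuity on $[0,T]\times\bar I$, and from that uniform convergence. Pointwise convergence does not justify that inference. Your split into $d_I(x)\le\delta$ and $d_I(x)>\delta$ gives uniform convergence directly, and joint continuity follows. The first case needs only the upper bound, $\int_I{\bf H}^s_I\le1$, and $w_0=0$ on $\partial I$. So once the survival estimate below is in place, your argument is more complete than the paper's at this point.

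That survival estimate, $\inf_{d_I(x)\ge\delta}\int_I{\bf H}^s_I(t,x,y)\,dy\to1$ as $t\to0^+$, is the one step you leave open.
\begin{itemize}
\item The lower bound in Lemma \ref{lem-hk}(i) does not give it; it only yields a positive constant, roughly $2c_1$.
\item Your exit-probability bound $O(t\delta^{-2s})$ is correct, e.g.\ by L\'evy's maximal inequality.
\item An analytic alternative is domain monotonicity. Since $B_\delta(x)\subset I$, one has $\int_I{\bf H}^s_I(t,x,y)\,dy\ge\int_{B_\delta(0)}{\bf H}^s_{B_\delta(0)}(t,0,y)\,dy$, a function of $t$ alone. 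So uniformity reduces to the one-point fact the paper already uses.
\end{itemize}

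Your fallback, by contrast, is circular as stated. Lemma \ref{lm comp-1} assumes $u\in C(\overline{\cQ_T})$ including $t=0$, which is the very continuity you are proving. So it cannot be used to compare the solution with $\psi\pm Ct$. You can replace it with the semigroup identity $P_t\psi-\psi=-d\int_0^tP_r\big((-\Delta)^s\psi\big)\,dr$, where $P_t$ has kernel ${\bf H}^s_I(dt,\cdot,\cdot)$, combined with $0\le\int_I{\bf H}^s_I\le1$. This identity is valid because $\psi\in C_c^\infty(I)$ lies in the domain of the Dirichlet generator.

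Likewise, your uniqueness remark needs classical solutions, which Lemma \ref{lm comp-1} requires but $f\in L^\infty$ does not provide. It is unnecessary anyway, since ``the solution'' in the lemma is the Duhamel one.
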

\begin{proof}
Let ${\bf H}^s$ denote the heat kernel of $(-\Delta)^s$ over $\R$, and for convenience we always assume
\[
\mbox{${\bf H}^s_I(t,x,y)=0$ \ for $(x,y)\not\in I\times I$,\ \quad  $w_0(x)=0$ for $x\not\in I$.}
\]
 It is well known that
\[
0\leq {\bf H}^s_I(t,x,y)\leq {\bf H}^s(t,x,y) \ \mbox{ for } t>0,\ x,\ y\in\R.
\]
Moreover,
\[
\int_{\R} {\bf H}^s(t,x,y)dy=1 \ \mbox{ for all } t>0,\ x\in\R,
\]
and for any $x\in I$ and $\epsilon>0$,
\[
\lim_{t\to 0^+}\int_{I} {\bf H}^s_I(t,x,y)dy=\lim_{t\to 0^+}\int_{B_\epsilon(x)\cap I} {\bf H}^s_I(t,x,y)dy=1,
\]
where $B_\epsilon(x)=(x-\epsilon, x+\epsilon)$.

By \eqref{hk-est1} we obtain, for $x\in I$,
\begin{align*}
 0\leq \int_{I\setminus B_\epsilon(x)}  {\bf H}_I^s(t,x,y)dy&\leq c_2  \int_{I\setminus B_\epsilon(x)}  (t^{\frac{-1}{2s}}\land \frac{t}{|x-y|^{1+2s}})dy\\
&\leq c_2\int_{I\setminus B_{\epsilon}(x)}\frac{t}{|x-y|^{1+2s}}dy\\[2mm]
&\leq t\, c_2\, |I| \epsilon^{-1-2s}\to 0 \ \mbox{ as } t\to 0^+.
\end{align*}
Hence
\[
\lim_{t\to 0^+}\int_{I\setminus B_\epsilon(x)} {\bf H}^s_I(t,x,y)dy=0.
\]

We will use these properties and the  following formula to obtain the desired estimates of the solution:
\begin{equation}\label{hk-sol}
    w(t,x)
    = \int_{I}  {\bf H}_I^s(t,x,y)w_0(y)dy
    + \int_0^t\int_{I} {\bf H}_I^s(t-\tau,x,y) f(\tau,y) \,dyd\tau.
\end{equation}

For any given $\delta>0$ small  there exists $\epsilon>0$  such that $|w_0(z)-w_0(y)|\leq \delta$ for all $y\in B_\epsilon(z)$ and $z\in\R$. Then for  any $x\in I$ we have, by the above estimates,
\begin{align*}
\lim_{t\to 0^+} |w(t,x)-w_0(x)|&=\lim_{t\to 0^+}\left |\int_{I}  {\bf H}_I^s(t,x,y)[w_0(y) -w_0(x)]dy\right|
 \\
 &=\lim_{t\to 0^+}\left |\int_{B_\epsilon(x)\cap I}  {\bf H}_I^s(t,x,y)[w_0(y) -w_0(x)]dy\right|\\
&\leq \limsup_{t\to 0^+} \int_{B_\epsilon(x)\cap I}  {\bf H}_I^s(t,x,y)|w_0(y)-w_0(x)|dy\\
&\leq \delta \limsup_{t\to 0^+} \int_{B_\epsilon(x)\cap I} {\bf H}_I^s(t,x,y)=\delta.
\end{align*}
Since $\delta>0$ can be arbitrarily small, it follows that
\begin{equation}\label{t-0}
\lim_{t\to 0^+} w(t,x)=w_0(x).
\end{equation}
Since $w(t,x)=0=w_0(x)$ for $x\in \partial I$, the above limit holds trivially for  $x\in\partial  I$.

By \eqref{hk-est2} and \eqref{hk-sol} we see that $w(t,x)$ is continuous over $(0, T]\times \bar I$. This, together with \eqref{t-0} and the continuity of $w_0(x)$ implies that $w(t,x)$ is continuous over $[0,T]\times \bar I$ and hence uniformly continuous over this compact set. It follows that $w(t,x)$ is equicontinuous for $(t,x)\in [0, T]\times \bar I$. In particular,
$w(t,x)\to w_0(x)$ uniformly for $x\in\bar I$ as $t\to 0^+$.

By \eqref{hk-est1} we obtain
\begin{align*}
 \int_{I}  {\bf H}_I^s(t,x,y)dy&\leq c_2 (1\land \frac{ d_I(x)^s}{\sqrt{t}}) \int_I  (t^{\frac{-1}{2s}}\land \frac{t}{|x-y|^{1+2s}})dy\ \,
\mbox{ for } x\in I,\ t\in (0, T].
\end{align*}
Since
\begin{align*}
\int_I  (t^{\frac{-1}{2s}}\land \frac{t}{|x-y|^{1+2s}})dy&\leq \int_{I\cap B_{t^{\frac 1{2s}}}(x)}  t^{\frac{-1}{2s}}dy+\int_{I\setminus B_{t^{\frac 1{2s}}}(x)}\frac{t}{|x-y|^{1+2s}})dy\\
&\leq 2+2t\int_{t^{\frac{-1}{2s}}}^\infty r^{-1-2s}dr=2+\frac 1 s,
\end{align*}
we have
\begin{align*}
 \int_{I}  {\bf H}_I^s(t,x,y)dy&\leq c_2 (2+\frac 1 s)(1\land \frac{ d_I(x)^s}{\sqrt{t}})\ \, \mbox{ for } x\in I,\ t\in (0, T],
\end{align*}
and hence
\[
 \left|\int_{I}  {\bf H}_I^s(t,x,y)w_0(y)dy\right|\leq C_1(1\land \frac{ d_I(x)^s}{\sqrt{t}})\ \, \mbox{ for } x\in I,\ t\in (0, T],
 \]
 with $C_1>0$ depending only on $\|w_0\|_\infty$, $s$ and $T$.
 Moreover,
\[
\int_0^t\int_{I} {\bf H}_I^s(t-\tau,x,y) \,dyd\tau\leq c_2 (2+\frac 1 s)d_I(x)^s\int_0^t \tau^{-1/2}d\tau=C_2t^{1/2} d_I(x)^s\ \, \mbox{ for } x\in I,\ t\in (0, T].
\]
 It now follows by \eqref{hk-sol} that
\[
|w(t,x)|\leq C_1 (1\land \frac{ d_I(x)^s}{\sqrt{t}})+C_2\|f\|_\infty t^{1/2} d_I(x)^s\leq C_0 (1\land \frac{ d_I(x)^s}{\sqrt{t}})
\]
for all $t\in (0, T]$, $x\in I$ and some $C_0>C_1$, depending only on  $\|w_0\|_\infty,\  \|f\|_\infty$, $s$ and $T$.
\end{proof}

We will use an approximation process to prove the existence of a solution to (\ref{eq 2.1-h}). For this purpose, for any given $T>0,\ (g,h)\in\bG_{h_0, T}\times \bH_{h_0, T}$ and  $n\in \N$ (the  set of positive integers), we define the following sequences:
\begin{align*}
&t_{n,i}:=\frac{i}{2^n} T, \qquad \cO_{{n,i}} :=(g(t_{n,i}), h(t_{n,i})),\ i=0, ..., 2^n,
\\
&\cQ_{{n,t}}:=\cO_{{n,i}}\ {\rm for }\ t\in (t_{n,i},t_{n,i+1}],\quad
\cQ_n:=\bigg\{(t,x): t\in (0,T],\ x\in \cQ_{{n,t}}\bigg\}.
\end{align*}

Note that each $\cQ_n$ is the union of $2^n$ rectangular regions in the $(x,t)$-plane, stacked on top of each other, with the one above no smaller in size than the one below, due to the  monotonicity of $g$ and $h$. Moreover,
$$\cQ_n\subset \cQ_{n+1}\subset   \Omega_{g,h}\quad \text{ for every $n\in\N$ \ \ and}\quad \Omega_{g,h}= \bigcup^{\infty}_{n=1} \cQ_n.$$
Therefore, $\{\cQ_n\}$ is an approximation sequence of $\Omega_{g,h}$.

Next we show that for any fixed $n\geq 1$,  the following approximation problem of \eqref{eq 2.1-h},
\begin{equation}\label{eq 2.1-h-n}
\left\{
\begin{array}{lll}
\partial_t w +d(-\Delta)^s  w =\eta   \quad &{\rm in}\ \,      \cQ_n,\\[3mm]
\qquad\quad\,   w(t,x)=0  \quad  &{\rm in}\ \,  \big((0,T]\times \R\big) \setminus \cQ_n,\\[3mm]
\qquad\quad\,   w(0,\cdot)=w_0  & {\rm in}\ \,  \R
\end{array}\right.
 \end{equation}
 admits a unique classical solution $u_n$ for suitably given $\eta$ and $w_0$. Let us note that in Proposition \ref{KG lm 1}, both $\eta$ and $w_0$ are assumed nonnegative.
 In Lemma \ref{lem-I} above, as well as in Lemma \ref{lemma2.5} below, these non-negativity assumptions are not required.

\begin{lemma}\label{lemma2.5}
	Let \( \alpha \in (0,1) \) be such that \( \frac{\alpha}{2s} \in (0,1) \) and \( \alpha + 2s \) is not an integer. Assume that
	$ \eta \in C^{\frac{\alpha}{2s},\alpha}(  \Omega_{g, h})   \cap L^\infty(\Omega_{g, h})$ and $w_0\in C_0(\bar \cO_0)$. Then the following conclusions are valid:    	
	\begin{itemize}
		\item[{\rm (i)}] 	Problem \eqref{eq 2.1-h-n} admits a unique solution \( u_n \in C\big([0, T] \times \mathbb{R}  \big) \cap C^{1+\frac{\alpha}{2s},2s+\alpha}_{\loc} ( Q_n) \).		
		\item[{\rm (ii)}] 	Let
		\begin{align*}
			\Omega_{g,h}^{\delta} = \{(t,x) : t \in (\delta,T],\ x \in (g(t) + \delta, h(t) - \delta)\}.
		\end{align*}
		Then, for $\delta>0$ small enough and all sufficiently large \( n \),  there exists a constant \( C=C(\delta, \eta) \)  independent of \( n \),  such that
		\begin{align}
			  \|u_n\|_{C^{1+\frac{\alpha}{2s}}_t(	\Omega_{g,h}^{\delta})} + \|u_n\|_{C^{2s+\alpha}_x(	\Omega_{g,h}^{\delta})} \leq C. \label{2.16a}
			\end{align}
	Moreover, there exists a constant $C_0>0$ depending on $\|\eta\|_\infty$, $\|w_0\|_\infty$, $s$ and $T$ but independent of $n$, such that
	\begin{align}		
			 |u_n(t,x)| \leq C_0(1\land\frac{ \rho_n^s (t,x)}{\sqrt{t}}) \quad \text{for } x \in \cQ_{n,t},\label{2.17a}
		\end{align}
	 where \[ \rho_n(t,x) :=d(x,\partial \cQ_{n,t})= {\rm min}\big(|x-g(t_{n,i})|, |x-h(t_{n,i})|\big) \] with $i$ determined by  $t\in (t_{n,i},t_{n,i+1}];$
	 in particular, there holds	
		\begin{align}\label{un-rho}
	 	 |u_n(t,x)| \leq C_0(1\land\frac{  \rho^s (t,x)}{\sqrt{t}})\quad \text{in }  (0,T]\times \R,
		\end{align}
		 where 		 \[
		  \rho(t,x)=\begin{cases} %d(x,\partial (g(t), h(t)))=
		  \min\big\{h(t)-x,\, x-g(t)\big\} & \mbox{ if } x\in (g(t), h(t)),\\[2mm]
		  0 & \mbox{ if } x\not\in (g(t), h(t)).
		  \end{cases}
		  \]

	\end{itemize}
\end{lemma}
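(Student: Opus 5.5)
The plan is to build $u_n$ step by step in time, using Lemma \ref{lem-I} on each rectangle of $\cQ_n$. On $(t_{n,0},t_{n,1}]$ we solve \eqref{eq-I} with $I=\cO_{n,0}$, $f=\eta$ and initial datum $w_0$, and call the solution $u_n^{(0)}$. Lemma \ref{lem-I} gives $u_n^{(0)}\in C([t_{n,0},t_{n,1}]\times\R)$, and it vanishes outside $\cO_{n,0}$. Since $\cO_{n,0}\subset\cO_{n,1}$, we have $u_n^{(0)}(t_{n,1},\cdot)\in C_0(\bar\cO_{n,1})$. This serves as initial datum on the next rectangle $(t_{n,1},t_{n,2}]\times\cO_{n,1}$, after shifting time, and we iterate $2^n$ times.

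Continuity across each $t=t_{n,i}$ follows from the uniform convergence to the initial datum in Lemma \ref{lem-I}. So the concatenation $u_n$ lies in $C([0,T]\times\R)$. Interior regularity $C^{1+\frac{\alpha}{2s},2s+\alpha}_{\loc}$ inside each open rectangle comes from Lemma \ref{lemmajfa}, applied on small balls in $x$ and time intervals away from $t_{n,i}$. We need $u_n$ bounded there, which the heat kernel representation \eqref{hk-sol} gives. Regularity across an interior time $t_{n,i}$ at a point $x\in\cO_{n,i-1}$ follows the same way, by applying Lemma \ref{lemmajfa} to a time window straddling $t_{n,i}$. Uniqueness follows from Lemma \ref{lm comp-1} (with $C\equiv 0$) applied to $\pm(u_n-\tilde u_n)$ on $\cQ_n$, whose cross-sections are nested.

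For \eqref{2.17a}, the naive route of iterating Lemma \ref{lem-I} would make the constant grow with the number of steps. Instead we express $u_n$ globally through the time-dependent Dirichlet kernel of the nested domains and compare by the maximum principle:
\[
|u_n(t,x)|\le \int_{\cO_{n,i}}{\bf H}^s_{\cO_{n,i}}(t,x,y)|w_0(y)|dy+\|\eta\|_\infty\int_0^t\!\int_{\cO_{n,i}}{\bf H}^s_{\cO_{n,i}}(t-\tau,x,y)dyd\tau
\]
for $t\in(t_{n,i},t_{n,i+1}]$. To justify this, let $W$ solve \eqref{eq-I} on the fixed interval $I=\cO_{n,i}$ over the whole time range $(0,t]$, with datum $|w_0|$ and forcing $\|\eta\|_\infty$. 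Then $W\pm u_n$ is a supersolution on $\cQ_n\cap\{\tau\le t\}$. Indeed, $W\ge0$ everywhere, and outside $\cQ_n$ we have $u_n=0$, while on $(0,t]\times I\supset\cQ_n\cap\{\tau\le t\}$ the equation for $W$ holds. Lemma \ref{lm comp-1} (on the region $\cQ_n$, exterior condition $W\ge0$) gives $|u_n|\le W$. The key point is that $\cO_{n,i}$ is the largest interval up to time $t$ and is the current domain at time $t$. Lemma \ref{lem-I} then yields $|u_n(t,x)|\le C_0(1\land d_{\cO_{n,i}}(x)^s/\sqrt t)$. By Remark \ref{rm-hk}, $C_0$ depends only on $\|w_0\|_\infty,\|\eta\|_\infty,s,T$, not on $|\cO_{n,i}|$, hence not on $n$ or $i$. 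This is \eqref{2.17a}.

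Since $\cO_{n,i}\subset\cO_t$, we get $\rho_n\le\rho$, which gives \eqref{un-rho}.

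For \eqref{2.16a}, fix $\delta$. For $n$ large, by uniform continuity of $g,h$, we have $\cQ_{n,t}\supset(g(t)+\delta/2,h(t)-\delta/2)$ for all $t\in(\delta/2,T]$. Covering $\Omega^\delta_{g,h}$ by finitely many cylinders $(t_0-\delta/2,t_0+\delta]\times B_{\delta/4}(x_0)$ contained in $\cQ_n$, and using the uniform bound $\|u_n\|_\infty\le C_0$ together with $\eta\in C^{\frac{\alpha}{2s},\alpha}$ on $\Omega^{\delta/4}_{g,h}$, Lemma \ref{lemmajfa} gives a bound independent of $n$.

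The main obstacle is making the comparison step rigorous. Lemma \ref{lm comp-1} needs $\partial_t u_n$ and $(-\Delta)^s u_n$ continuous in $\cQ_n$, including near the switching times. These are exactly the times where $\cQ_n$ gains new boundary pieces: the jumps of $\cO_{n,i}$ at $t_{n,i}$ sit at the corner points of the rectangles. I would handle this by applying the maximum principle on each time slab $(t_{n,j},t_{n,j+1}]$ successively, using the already obtained inequality at $t_{n,j}$ as initial data. This matters only at the corners and in the initial layer, where continuity is supplied by Lemma \ref{lem-I}.
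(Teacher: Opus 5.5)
Your proposal is correct and follows essentially the same route as the paper. You build $u_n$ slab by slab with heat kernels on the rectangles of $\cQ_n$, and you use Lemma \ref{lm comp-1} for uniqueness and sign control. You get interior bounds from Lemma \ref{lemmajfa} on finitely many small cylinders inside $\cQ_n$. You obtain \eqref{2.17a} by comparing $\pm u_n$ with the solution on the single rectangle $(0,t]\times\cQ_{n,t}$ and invoking Lemma \ref{lem-I} together with Remark \ref{rm-hk}.

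Two small remarks. Your constant forcing $\|\eta\|_\infty$ in that comparison is a clean choice; the paper's $|\eta|$ is not defined on the part of the rectangle lying outside $\Omega_{g,h}$. There is one slip: the backward time length of your covering cylinders must be chosen from the modulus of continuity of $g,h$ rather than fixed at $\delta/2$, because $\cO_\tau$ for $\tau<t_0$ can be much smaller than $\cO_{t_0}$.
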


\begin{proof}	(i)
  Denote by  ${\bf H}_i$ the Dirichlet heat kernel of $(-\Delta)^s$ on $ (g(t_{n,i}), h(t_{n,i}))$, namely
  \[
  {\bf H}_i={\bf H}^s_{(g(t_{n,i}), h(t_{n,i}))}.
  \]
 For $ (t,x)\in (0,t_{n,1}]\times \R$,  define
\begin{equation}\label{eq 1.1-int-s}
    w_1(t,x)
    := \int_{\cO_{{n,0}}}  {\bf H}_1(t,x,y)w_0(y)dy
    +
    \int_0^t\int_{\cO_{{n,0}}} {\bf H}_1(t-\tau,x,y) \eta(s,y)  \,dyd\tau.
\end{equation}
Iteratively,   for $ (t,x)\in (t_{n,i-1},t_{n,i}]\times \R$ with $i=2,\cdots, 2^n$, define
\begin{equation}\label{eq 1.1-int-s}\begin{aligned}
    w_i(t,x)
    := &\int_{\cO_{{n,i-1}}}  {\bf H}_i(t-t_{n, i-1},x,y)w_{i-1}(t_{n,i-1},y)dy\\
    & \ +
    \int_{t_{n, i-1}}^t\int_{\cO_{{n,i-1}}} {\bf H}_i(t-\tau ,x,y) \eta(\tau,y)   \,dyd\tau.
\end{aligned}\end{equation}
 Now  set
 $$ u_n(t,x):=w_i(t,x)\quad {\rm if} \ \, t\in (t_{n,i-1},t_{n,i}], \ \ x\in\R, \ i=1,..., 2^n. $$
 By Lemma \ref{lem-I} we easily see that $u_n$ is a weak solution of \eqref{eq 2.1-h-n}. Moreover, using Lemma \ref{lemmajfa}, we further see that
 \eqref{eq 2.1-h-n} is satisfied point-wisely in $\cQ_n$, and  so $u_n$ is a classical solution.
 The uniqueness of $u_n$ now follows from the maximum principle (see Lemma \ref{lm comp-1}).

 \smallskip
	
	(ii)
	Consider the function
	\[\mbox{\( v(t,x) := e^{\eta_* t} K_1 \), where \( K_1 =\|w_0\|_\infty+1\) and \( \eta_* = \|\eta\|_\infty \).}
	\]
	 This function satisfies
	\[
	\partial_t v + d(-\Delta)^s v = \eta_* v \geq \eta(t,x) \quad \text{in } (t,x) \in \cQ_n.
	\]
	By inductively applying the maximum principle Lemma \ref{lm comp-1} over the domains \( \{(t,x) : t \in (t_{i-1}, t_i],\, x \in \cQ_{n,t}\} \), we deduce that \( u_n \leq e^{\eta_* t} K_1 \) for \( (t,x) \in \bar{\cQ}_n \). Similarly we can show \( u_n \geq -e^{\eta_* t} K_1 \) for \( (t,x) \in \bar{\cQ}_n \).
	Hence,
	\[
	\mbox{\( |u_n(t,x)|\leq C_1 = e^{\eta_* T} K_1 \) for \( (t,x) \in \bar{\cQ}_n \).}
	\]
	
	 Since \( 	\Omega_{g,h}^{\delta} \) and $\cQ_n$ converge to \( \Omega_{g,h} \) as $\delta\to 0$ and $n\to \infty$, respectively,  we have for sufficiently small \( \delta \) and large $n$,
	\[
	\Omega_{g,h}^{\delta} \subset \Omega_{g,h}^{\delta/2} \subset \cQ_n.
	\]
	Clearly, for small \( \epsilon > 0 \) depending on \( \delta \),
	%\( \inf_{t \in [0,T]} |g'(t)| \), and \( \inf_{t \in [0,T]} |h'(t)| \),
	there exist finitely many points \( \{(t_i, x_i)\}_{i=1}^N \subset \Omega_{g,h}^{\delta/2} \) such that
	\[
	\Omega_{g,h}^{\delta} \subset \bigcup_{i=1}^N (t_i + \epsilon, t_i + 2\epsilon) \times B_{\epsilon}(x_i)  \subset \bigcup_{i=1}^N (t_i , t_i + 2\epsilon) \times B_{2\epsilon}(x_i) \subset \Omega_{g,h}^{\delta/2}.
	\]
	Given that \( |u_n| \leq C_1 \) and for some $C_2>0$
	$$ \|\eta\|_{C^{\frac{\alpha}{2s},\alpha}(\Omega_{g,h}^{\delta/2} )}\leq C_2, $$
 applying Lemma \ref{lemmajfa} to each cylinder \( (t_i, t_i + 2\epsilon) \times B_{2\epsilon}(x_i) \), we obtain the interior estimates of \( u_n \) as stated in \eqref{2.16a},
 with $C$ depending on $\delta$ but not on $n$.
	
	It remains to prove \eqref{2.17a}.
	For any \( t' \in (0,T] \),  consider the following problem over the rectangle domain $[0,t']\times \cQ_{n, t'}$:
	\[
	\begin{cases}
		\partial_t w + d(-\Delta)^s w = |\eta| & \text{for } (t,x) \in (0,t'] \times \cQ_{n,t'}, \\[3mm]
		w(t,x) = 0 & \text{for } (t,x) \in (0,t'] \times (\mathbb{R} \setminus \cQ_{n,t'}), \\[3mm]
		w(0,\cdot) = |w_0| & \text{in } \cQ_{n, t'}.
	\end{cases}
	\]
	By the maximum principle we know that $w\geq 0$. By Lemma \ref{lem-I}, there exists
 some constant $C_0>0$ depending on $\|\eta\|_\infty$, $\|w_0\|_\infty$, $s$ and $T$,  but independent of $t'$ and $n$, such that
	\[
	0 \leq  w(t,x) \leq C_0(1\land\frac{ d^s(x, \partial{\cQ_{n,t'}})}{\sqrt{t}}) \mbox{ for } \ (t,x) \in (0,t'] \times \cQ_{n,t'}.
	\]
	 By the maximum principle  Lemma  \ref{lm comp-1} we easily deduce that
	\[
	|u_n(t,x)| \leq w(t,x) \quad \text{for } (t,x) \in (0,t'] \times \cQ_{n,t'}.
	\]
Hence
	\[
	| u_n(t',x)| \leq C_0(1\land\frac{ d^s(x, \partial{\cQ_{n,t'}}}{\sqrt{t'}}) \quad \text{for } x \in \cQ_{n,t'},
	\]
	which gives \eqref{2.17a}. The proof is finished.
%This provides {\color{red}the boundary regularity \( u_{n}(t,\cdot) \in C^{s}_0(\cQ_{n,t}) \)}. The proof
\end{proof}

\begin{proof}[\bf Proof of Proposition \ref{KG lm 1}.] Since $\cQ_n\subset \cQ_{n+1}$ and $\eta\geq 0$, $w_0\geq 0$, each $u_{n+1}$ is nonnegative and is a super solution of
 (\ref{eq 2.1-h-n}) over $\cQ_n$. So the maximum principle Lemma \ref{lm comp-1} gives
 $$u_n\leq u_{n+1}\quad {\rm in}\ \, [0,T]\times \R$$
 indicating that $u_n$ is nondecreasing with respect to  $n$.
By Lemma \ref{lemma2.5}, the  sequence $u_n$ is uniformly bounded from above by a constant.
Hence, there is a unique function $u_{g,h}\in L^\infty$ such that
$$u_{g,h}(t,x)=\lim_{n\to\infty} u_n(t,x)\quad {\rm for}\ \,(t,x)\in [0,T]\times \R,$$
and
$$u_{g,h}(t,\cdot)=\lim_{n\to\infty} u_n(t,\cdot)\quad {\rm in}\ \, L^1(\R).$$

In view of  \eqref{2.16a}, and the compact  embeddings
$C^{1+\frac{\alpha}{2s}}_t(	\Omega_{g,h}^{\delta})\hookrightarrow C^{1+\frac{\alpha'}{2s}}_t(	\Omega_{g,h}^{\delta})$
and $C^{2s+\alpha}_x(	\Omega_{g,h}^{\delta})\hookrightarrow C^{2s+\alpha'}_x(	\Omega_{g,h}^{\delta})$ for any fixed $\alpha'\in (0,\alpha)$,
 we conclude that
$u_n\to u_{g,h}\quad {\rm in}\ C^{1+\frac{\alpha'}{2s},\,2s+\alpha'}(	\Omega_{g,h}^{\delta})$ and
\begin{align}\label{2.18}
\|u_{g,h}\|_{C^{1+\frac{\alpha'}{2s}}_t(	\Omega_{g,h}^{\delta})} + \|u_{g,h}\|_{C^{2s+\alpha'}_x(	\Omega_{g,h}^{\delta})} \leq C_3
\end{align}
for some $C_3=C_3(\delta)>0$.
 Therefore, we have that
$$\partial_tu_{g,h} + d(-\Delta)^s u_{g,h} = \eta  \quad  \text{for } (t,x) \in  \Omega_{g,h}. $$

By \eqref{un-rho}, we obtain
\begin{align}\label{2.19}
\quad |u_{g,h}(t,x)| \leq C_0(1\land\frac{ \rho^s (t,x)}{\sqrt{t}}) \quad \text{for } t\in (0, T],\ x \in (g(t),h(t)).
\end{align}

 Recall that $\cO_T=(g(T), h(T))$. Let $\bar w$ be the solution of
 \[
\begin{cases}
	\partial_t u + d(-\Delta)^s u =\eta & \text{for } (t,x) \in  (0,T]\times \cO_T, \\[3mm]
	u(t,x) =  0 & \text{for } (t,x) \in (0,T]\times (\R \setminus \cO_T), \\[3mm]
	u(0,\cdot) = w_0 & \text{in }\,  \cO_T.
\end{cases}
\]
Then  the maximum principle yields
 $$u_n \leq  \bar w \quad {\rm in}\ \,  (0,T]\times \R\ \,  \mbox{ for every } n\geq 1. $$
 It follows that $u_{g,h}\leq \bar w$ in $(0,T]\times \R$.
 On the other hand, for fixed $n\geq 1$, we have
 $$u_{g,h} \geq  u_n \quad {\rm in}\ \,  (0,T]\times \R. $$
 Since $\bar w$ solves the equation over a rectangle, we can use Lemma \ref{lem-I}  to see that $\bar w (t,x)\to w_0(x)$ uniformly in $x\in\R$ as $t\to 0$.
 For the same reason, we see that
  $u_n (t,x)\to w_0(x)$ uniformly in $x\in\R$ as $t\to 0$.
These imply
$$\lim_{t\to0^+} u_{g,h}(t,x)=w_0(x)\quad {\rm uniformly\  for}\ \, x\in\R. $$
Hence  $u_{g,h}$ satisfies \eqref{eq 2.1-h} in the classical sense. The uniqueness conclusion follows from the maximum principle Lemma \ref{lm comp-1}.
\end{proof}

  \setcounter{equation}{0}
 \section{Semilinear  problem with given curved boundaries}

 In this section, we consider  the following semilinear problem with curved boundaries:
  \begin{equation}\label{eq-u}
\left\{
\begin{array}{lll}
\partial_t  u +d(-\Delta)^s  u =f(t,x,u)    \quad &{\rm for}\ \,   (t,x)\in \Omega_{g,h,T},\\[3mm]
  u(t,x)=0  \quad  &{\rm for}\ \, (t,x)\in \big((0,T]\times \R\big) \setminus \Omega_{g,h,T},\\[3mm]
u(0,\cdot)=u_0  &{\rm in}\ \    \cO_0,
\end{array}\right.
 \end{equation}
 where $ \Omega_{g,h,T}$ is a fixed domain in $[0,+\infty) \times \R$, defined in the previous section.

 When $f$ satisfies ${\bf (f1)}$, we can always shrink the value of $\alpha\in (0,1)$ slightly when needed such that
 \[
  \frac{\alpha}{2s} \in (0,1) \mbox{ and \( \alpha + 2s \) is not an integer.}
 \]
 We will make this convention from now on.

 \begin{theorem}\label{thm-semi}
 	 Assume that  $f$ verifies {\bf(f1)}, {\bf(f2)},
	  $(g,h)\in\bG_{h_0, T}\times \bH_{h_0, T}$ and  $u_0\in  \mathcal I_0 (\cO_0)$.
 	Then
 	problem \eqref{eq-u}
 	admits a classical  nonnegative solution $u\in C\big([0, T]\times \R  \big)\cap  C_{\loc}^{1+\frac{\alpha'}{2s} ,2s+\alpha'}(\Omega_{g,h})$, and
		\begin{equation}\label{ab-2}
 		 u(t,x) \leq C_0(1\land \frac{\rho^s (t,x)}{\sqrt{t}}) \quad \text{for } t\in (0,T],\ x \in (g(t),h(t)),
 	\end{equation}
 	where   \(  \rho(t,x):=\min\big\{h(t)-x,\, x+g(t)\big\} \), $\alpha'\in (0,\alpha)$,  and $C_0$ is a positive constant depending only on $\|w_0\|_\infty$, $f$, $s$ and $T$.
\end{theorem}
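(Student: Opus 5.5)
The plan is to build the solution by monotone iteration, using Proposition \ref{KG lm 1} as the linear solver at each step. The only obstacle to applying it directly is that Proposition \ref{KG lm 1} needs a nonnegative right-hand side, and I would remove this with an exponential change of variables.

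\textbf{Setup.} Set $M:=\max\{K_0,\|u_0\|_\infty\}$ and let $L=L(M)$ be the Lipschitz constant of $f$ in $\tau\in[0,M]$ from {\bf (f1)}. Put
\[
F(t,x,\tau):=f(t,x,\tau)+L\tau .
\]
Then $F$ is nondecreasing in $\tau\in[0,M]$ and $F(t,x,\tau)\ge 0$ there, since $f(t,x,0)=0$. Writing $u=e^{-Lt}v$, problem \eqref{eq-u} becomes
\[
\partial_t v+d(-\Delta)^s v=\tilde F(t,x,v):=e^{Lt}F\big(t,x,e^{-Lt}v\big),
\]
with the same exterior and initial data. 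The map $\tilde F$ is nonnegative and nondecreasing in $v$ on the relevant range $0\le v\le e^{LT}M$.

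\textbf{Iteration.} Define $v^0\equiv0$, and for $k\ge0$ let $v^{k+1}$ be the classical solution given by Proposition \ref{KG lm 1} with $\eta=\eta_k:=\tilde F(t,x,v^k)\ge0$ and $w_0=u_0$.
\begin{itemize}
\item \emph{Hypotheses of Proposition \ref{KG lm 1}.} We need $\eta_k\in C^{\frac{\alpha}{2s},\alpha}_{\rm loc}(\Omega_{g,h})\cap L^\infty$. This follows from {\bf (f1)} (Hölder in $(t,x)$, Lipschitz in $\tau$) together with the local regularity $v^k\in C^{1+\frac{\alpha}{2s},2s+\alpha}_{\rm loc}$ already supplied by Proposition \ref{KG lm 1} at the previous step. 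If necessary I will lower $\alpha$ to $\alpha'$ at this point.
\item \emph{Monotonicity.} The differences $v^{k+1}-v^k$ satisfy the linear equation with right-hand side $\eta_k-\eta_{k-1}$ and zero initial and exterior data. Since $\tilde F$ is nondecreasing, induction with Lemma \ref{lm comp-1} gives $0\le v^1\le v^2\le\cdots$.
\item \emph{Upper bound.} The function $\bar v:=e^{Lt}M$ is a supersolution, because $\partial_t\bar v+d(-\Delta)^s\bar v=Le^{Lt}M\ge \tilde F(t,x,\bar v)$ by {\bf (f2)}. Lemma \ref{lm comp-1} then gives $v^k\le e^{Lt}M$.
\end{itemize}
Consequently $\|\eta_k\|_\infty\le e^{LT}\big(\sup_{[0,M]}|f|+LM\big)$ uniformly in $k$. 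The boundary estimate \eqref{ab-2} of Proposition \ref{KG lm 1} therefore holds for every $v^k$ with one constant $C_0$ depending only on $\|u_0\|_\infty$, $f$, $s$ and $T$.

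\textbf{Passage to the limit.} Let $v:=\lim_k v^k$, a monotone and bounded limit, and set $u=e^{-Lt}v$.
\begin{itemize}
\item \emph{Boundary estimate.} It passes to the limit pointwise, giving \eqref{ab-2} for $u$.
\item \emph{Initial trace.} From $v^1\le v\le \bar w$, where $\bar w$ solves the linear problem on the rectangle $(0,T]\times\cO_T$ with constant right-hand side $\sup_k\|\eta_k\|_\infty$, Lemma \ref{lem-I} gives $u(t,\cdot)\to u_0$ uniformly as $t\to0^+$.
\item \emph{Continuity.} The boundary estimate gives continuity at the lateral boundary, and the interior convergence below gives continuity inside. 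Together with the initial trace, this yields $u\in C([0,T]\times\R)$.
\end{itemize}

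\textbf{Main obstacle: uniform interior estimates.} For the local convergence I need $k$-independent bounds in $C^{1+\frac{\alpha'}{2s},2s+\alpha'}$ on each $\Omega^\delta_{g,h}$. Lemma \ref{lemmajfa} would give these, but it requires a uniform bound on $\|\eta_k\|_{C^{\frac{\alpha}{2s},\alpha}}$. By {\bf (f1)}, that norm is controlled by $1+$ the Hölder seminorm of $v^k$, so the estimate looks circular.

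I would break the circularity with a two-step bootstrap.
\begin{enumerate}
\item Use the uniform $L^\infty$ bound on $\eta_k$ together with the interior estimate for bounded right-hand sides. This is the $L^\infty$-version of the Fernández-Real--Ros-Oton estimates \cite{FR2017}; alternatively it can be derived from the Duhamel representation with the kernel bounds of Lemma \ref{lem-hk} on small rectangles inside $\Omega^{\delta/2}_{g,h}$. It yields a uniform $C^{\beta/2s,\beta}$ bound on $v^k$ on $\Omega^{\delta/2}_{g,h}$ for some $\beta>0$.
\item Since $\tilde F$ is Lipschitz in $v$, this gives a uniform $C^{\alpha'/2s,\alpha'}$ bound on $\eta_k$ on $\Omega^{\delta/2}_{g,h}$ with $\alpha'\le\min\{\alpha,\beta\}$. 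Lemma \ref{lemmajfa}, applied on a finite cover by cylinders exactly as in Lemma \ref{lemma2.5}(ii), then gives the desired uniform $C^{1+\frac{\alpha'}{2s},2s+\alpha'}$ bounds.
\end{enumerate}
This bootstrap is the reason the theorem is stated with $\alpha'<\alpha$.

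\textbf{Conclusion.} With the uniform bounds in hand, compact embedding lets me pass to the limit in the equation locally, so $v$, and hence $u$, is a classical solution. Nonnegativity is inherited from $v^k\ge0$.
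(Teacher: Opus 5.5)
Your proposal is correct and is essentially the paper's own proof. The shared steps are: the exponential change of variables that makes the nonlinearity nonnegative and nondecreasing; monotone iteration through Proposition~\ref{KG lm 1}, capped by the supersolution $e^{Lt}M$; the uniform boundary estimate and initial trace obtained by comparison with rectangle problems via Lemma~\ref{lem-I}; and the two-step interior bootstrap, first \cite[Theorem 1.3]{FR2017} for bounded right-hand sides and then \cite[Theorem 1.1]{FR2017}. One small correction: \cite[Theorem 1.3]{FR2017} gives $C^{1-\frac{\epsilon}{2s},\,2s-\epsilon}$ bounds with $\epsilon$ chosen so that $2s-\epsilon\ge\alpha$, so the bootstrap already yields $k$-independent bounds at the full exponent $\alpha$; the drop to $\alpha'<\alpha$ in the paper comes from the compact embedding used to extract the convergent limit, not from the bootstrap.
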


\noindent{\bf Proof. }
 We claim that any classical  solution of (\ref{eq-u}) is uniformly bounded, namely,
 \begin{equation}\label{bound-1}
  0\leq u(t, x) \leq \max\big\{\|u_0\|_{L^\infty(\cO_0)}+1, K_0+1\big\}=:K_1 \ \ \mbox{ for } t\in (0, T],\ x\in\R,
   \end{equation}
  where $K_0$ satisfies  $f(t,x,K_0)<0$ for all $t\geq 0$ and $x\in \mathbb{R}$.

     In fact,  for any $K\geq K_1$, the constant functions $\bar u\equiv K$ and $\underline u\equiv 0$ form a pair of upper and lower solutions of (\ref{eq-u}). It then follows from the maximum principle that \eqref{bound-1} holds.

By ${\bf (f1)}$, there exists $L_0>0$ depending on $K_1$ such that
\[
|f(t,x,\tau_1)-f(t,x,\tau_2)|\leq L_0|\tau_1-\tau_2| \ \mbox{ for } (t,x)\in[0,T]\times \R,\ \tau_1,\tau_2\in [0, e^{L_0T}K_1].
\]
 Define
 \begin{equation}\label{eq lam-0}
 \ F_0(t,x,\tau):=L_0 \tau+   e^{L_0 t}f(t,x, e^{-L_0t} \tau).
 \end{equation}
 Then  $\tau\mapsto F_0(t,x,\tau)$ is nonnegative and increasing for any $(t,x)\in[0,T]\times \R$, $\tau\in [0, e^{L_0T}K_1]$, since
 \begin{align*}
  F_0(t,x,\tau_2)-F_0(t,x,\tau_1) &= L_0(\tau_2-\tau_1)+ e^{L_0 t}f(t,x, e^{-L_0 t} \tau_2)-f(t,x, e^{-L_0 t} \tau_1)\\
  &\geq L_0(\tau_2-\tau_1)-L_0|\tau_2-\tau_1|\geq 0
  {\rm\ \ for}\ \, 0\leq \tau_1\leq\tau_2\leq K_1,\ (t,x)\in[0,T]\times \R.
  \end{align*}

If $(g,h,u)$ is a solution of (\ref{eq 1.1})
and
$
\bar u(t,x)=e^{L_0 t} u(t,x)$. Then $(g,h,\bar u)$ is a solution of
 \begin{equation}\label{eq 2.1}
\left\{
\begin{array}{rll}
\partial_t  \bar u  +d(-\Delta)^s  \bar u=F_0(t,x,\bar u)  \quad &{\rm for}\ \,   (t,x)\in \Omega_{g,h},\\[3mm]
 \ \bar u(t,x)=0  \quad  &{\rm for}\ \, t\in(0,T],\ \, x\in \R \setminus\big(g(t),h(t)\big),\\[4mm]
 \   \bar u(0,x)=u_0(x) \quad &{\rm for}\ \,  x\in\cO_0.
 \end{array}\right.
 \end{equation}

{\bf Step 1:} We show the existence of a weak solution.

 We do this by an iteration procedure.
 By Proposition \ref{KG lm 1}, the     problem
 \begin{equation}\label{eq 3.1-fix-0}
 \left\{
\begin{array}{rll}
\partial_t  u  +d(-\Delta)^s  u=0  &{\rm for}\ \,   t\in \Omega_{g,h,T},\\[3mm]
 \, u(t,x)=0    &{\rm for}\ \, t\in(0,T],\ \, x\in \R \setminus (g(t),h(t)),\\[3mm]
 u(0,x)=u_0(x)  &{\rm for}\ \,  x\in[-h_0,h_0]
\end{array}\right.
 \end{equation}
 has a unique positive solution 
 $\bar u_{0} \in C\big([0, T]\times \R  \big)\cap  C_{\rm loc}^{1+\frac{\alpha}{2s} ,2s+\alpha}(\Omega_{g,h,T})$.

 It is clear that the function $U_0(t,x)=e^{L_0 t} K_1$ satisfies $U_0(0,x)=K_1\geq u_0(x)$  and
$$ \partial_t  U_0  +d(-\Delta)^s U_0=L_0 U_0\geq F_0(t,x, U_0)\geq 0\quad{\rm in}\ (0,T]\times \R.   $$
Then the maximum principle Lemma \ref{lm comp-1}  indicates that
$$\bar u_{0}\leq  U_0\leq e^{L_0T}K_1\quad{\rm in}\ \, [0,T]\times\R,$$
and hence $F(\cdot,\cdot, \bar u_0)\geq 0$.

 Repeatedly  using  Proposition \ref{KG lm 1} and the maximum principle, together with the fact that $\tau\to F_0(t,x,\tau)$ is nonnegative and increasing for $\tau\in [0, r^{L_0T}K_1]$,
 we could define $\bar u_{i}\in C\big([0, T]\times \R  \big)\cap  C_{\rm loc}^{1+\frac{\alpha}{2s} ,2s+\alpha}(\Omega_{g,h,T})$ as the unique classical solution of
 \begin{equation}\label{eq 3.1-fix-i}
 \left\{
\begin{array}{rll}
\partial_t  u  +d(-\Delta)^s  u= F_0(t,x, \bar u_{i-1})        &{\rm for}\ \,   (t,x)\in\Omega_{g,h,T},\\[3mm]
 u(t,x)=0    &{\rm for}\ \, t\in(0,T],\ \, x\in \R \setminus\big(g (t),h (t)\big),\\[3mm]
 u(0,x)=u_0(x)  &{\rm for}\ \,  x\in[-h_0,h_0],
\end{array}\right.
 \end{equation}
 and obtain
   \begin{align*}
U_0\geq  u_{i+1}\geq u_{i}\geq0\ \mbox{ for } i=0,1,2,...
 \end{align*}
 Hence $\{u_{i}\}_{i=0}^{\infty}$ is an increasing sequence of nonnegative functions, and 
  $\bar u:=\lim_{i\to+\infty}\bar u_{i}$
 is well-defined in $[0,T]\times\R$.

  Moreover,  for any $\varphi \in  C^{1, 2s}\big(\overline \Omega_{g,h, T}\big)$ satisfying $\varphi (t,\cdot)\in C_c^{2s}(\cO_t)$,
  \begin{equation}\label{weak identity-2-i}
\begin{aligned}
 &\int_{0}^{T} \int_{\R^N}\bar u_{\lambda_0,i}\Big(- \partial_t \varphi  +   (-\Delta)^s
\varphi \Big)- \varphi F_0(t,x, u_{\lambda_0, i-1})      d x\,  d t \\
&=\int_{\R^N} \varphi(0, x) w_0(x)\,d x-\int_{\R^N} u_{i}\left(T, x \right)
\varphi\left(T,x \right) d x.
\end{aligned}
\end{equation}
Passing to the limit $i\to\infty$ in (\ref{weak identity-2-i}), we see that $\bar u$  is a weak solution of  (\ref{eq 2.1}).
 Moreover,  $\bar u$ is nonnegative  and bounded.

 \smallskip

 {\bf Step 2:}  We show that $\bar u$ is a classical solution.

 For any $(T_1,T_2)\times B_r(x_0)\subset \Omega_{g,h,T}$, it follows by \cite[Theorem 1.3]{FR2017}  that
for $T_1<T_1'<T_2'<T_2$ and $\epsilon\in(0,s)$,
there exists $C>0$ such that
 \begin{align*}
&\quad\  \| \bar u_{i}\|_{C_t^{1-\frac{\epsilon}{2s}}((T_1',T_2')\times B_{\frac r2}(x_0)) } + \| \bar u_{i}\|_{C_x^{2s-\epsilon}((T_1',T_2')\times B_{\frac r2}(x_0))}
\\[1mm]&\leq C\Big(\| \bar u_{i}\|_{L^\infty((T_1,T_2)\times B_r(x_0)) }  +\|F_0(\cdot,\cdot,\bar u_{i-1}) \|_{L^\infty((T_1,T_2)\times B_r(x_0)) } \Big)
\\[1mm]&\leq CK_1,
  \end{align*}
where $C>0$ is independent of $i\geq 1$. Thus, for every $i\geq 1$ we have
 \begin{align*}
 \| \bar u_{i}\|_{C^{1-\frac{\epsilon}{2s},2s-\epsilon}_{t,x}((T_1',T_2')\times B_{\frac r4}(x_0)) } &  \leq CK_1.
  \end{align*}
These estimates and the properties of $F_0$ allow us to use \cite[Theorem 1.1]{FR2017} to conclude that
 for $T_1'<T_1''<T_2''<T_2'$ and $\alpha\in(0,1)$,  there exists $C>0$ such that, for every $i\geq 2$,
 \begin{align*}
& \| \bar u_{i}\|_{C^{1+\frac{\alpha}{2s},2s+\alpha}_{t,x}((T_1'',T_2'')\times B_{\frac r4}(x_0)) }
\\[1mm] &\leq   \| \bar u_{i}\|_{C_t^{1+\frac{\alpha}{2s}}((T_1'',T_2'')\times B_{\frac r4}(x_0)) } + \| \bar u_{i}\|_{C_x^{2s+\alpha}((T_1'',T_2'')\times B_{\frac r4}(x_0))}
\\[1mm]&\leq C\Big(\| \bar u_{i}\|_{C^{\frac{\alpha}{2s},\alpha}_{t,x}((T_1',T_2')\times B_{\frac r4}(x_0)) }  +\|F_0(\cdot,\cdot,\bar u_{i-1})\|_{C^{\frac{\alpha}{2s},\alpha}_{t,x}((T_1',T_2')\times B_{\frac r4}(x_0)) } \Big)
\\[1mm]&\leq C\Big[\| \bar u_{i}\|_{C^{\frac{\alpha}{2s},\alpha}_{t,x}((T_1',T_2')\times B_{\frac r4}(x_0)) }
   +C_{F_0}  \| \bar u_{i-1}\|_{C^{\frac{\alpha}{2s},\alpha}_{t,x}((T_1',T_2')\times B_{\frac r4}(x_0)) } \Big]
\\[1mm]&\leq C'K_1,
  \end{align*}
where $C, C'>0$ are independent of $i$.
This implies that $ \bar u \in C_{t,x}^{1+\frac{\alpha}{2s}, 2s+\alpha}(Q)$ for every $Q\Subset\Omega_{g,h,T}$.

For $\bar t\in(0,T]$,  and $\bar K:=\sup_{t\in[0,T],x\in\R,\tau\in[0,K_1]} F_0(t,x,\tau)$, let $w_{*}$  be the solution of  the problem
$$
\left\{
\begin{array}{lll}
\partial_t  u +d(-\Delta)^s  u =\bar K    \quad &{\rm for}\ \,    (t,x)\in   (0,\bar t]\times \cO_{\bar t} ,\\[3mm]
  u(t,x)=0  \quad  &{\rm for}\ \,  (t,x)\in  (0,\bar t]\times \big(\R\setminus  \cO_{\bar t} \big),\\[3mm]
u(0, x)=u_0(x)  & {\rm for}\ \,   x\in\cO_{\bar t}.
\end{array}\right.
$$
By Lemma \ref{lem-I} there exists $C_0>0$ depending only on $\|w_0\|_\infty$, $\bar K$ (which is completely determined by $f$), $s$ and $T$, such that
$$0\leq w_{*}(t,x) \leq C_0(1\land \frac{d^s(x, \partial\cO_{\bar t})}{\sqrt{\bar t}})  \quad{\rm for }\ \,  (t,x)\in(0,t']\times \cO_{\bar t}.  $$
 By the maximum principle, we have
$$\bar u_0(t,x)\leq \bar u(t,x)\leq w_{*}(t,x)   \quad{\rm for\ any}\ \,  (t,x)\in(0,\bar t]\times \cO_{\bar t}. $$
It follows that
$$\bar u(t,x)  \leq C_0(1\land \frac{d^s(x, \partial\cO_{ t})}{\sqrt{t}})  \ \mbox{ for } (t,x)\in\Omega_{g,h,T},  $$
and
\[
\bar u(t,x)\to u_0(x) \mbox{ as $t\to 0^+$  uniformly for $x\in \R$.}
\]

Therefore, we can conclude that, for some fixed $\alpha'\in (0, \alpha)$,
$$\bar u_{i}\to \bar u \quad {\rm in}\ \, C_{\loc}^{1+\frac{\alpha'}{2s}, 2s+\alpha'}(\Omega_{g,h})\cap C(\bar \Omega_{g,h}), $$
which,  combined with the fact that $\bar u_{i}=0$ in $\big((0,T]\times \R\big)\setminus \Omega_{g,h}$, allow us to pass to the limit  as $i\to+\infty$ in (\ref{eq 3.1-fix-i})  to see that
 $  \bar u$ indeed is a classical solution of \eqref{eq 2.1}. Moreover,  (\ref{ab-2}) holds.

   \smallskip

{\bf Step 3:}   We prove the uniqueness.

  To this end,  we assume that $u_1, u_2$ are classical solutions of (\ref{eq-u}).  Let
$$w(t,x)=e^{-L_0 t}\big(u_1(t,x)-u_2(t,x)\big)\quad \mbox{for } (t,x)\in(0,T]\times \R, $$
where  $L_0$ is the Lipschitz constant of $f$ introduced at the beginning of the proof.
 Then $w$ satisfies
 \begin{equation}\label{eq 2.1-sh-u}
\left\{
\begin{array}{cll}
w_t +d(-\Delta)^s  w =-L_0  w+ e^{-L_0 t_0}[f (t,x,u_1)- f (t,x,u_2) ], & (t,x)\in \Omega_{g,h,T},\\[2mm]
 w(t,x)=0,  & (t,x)\in \big((0,T]\times \R\big) \setminus \Omega_{g,h,T},\\[2mm]
 w (0,x)=0,  & x\in \cO_0.
\end{array}\right.
 \end{equation}
  Now we  show $w\equiv 0$ in $\Omega_{g,h}$.
 If  there is a point $(t_0,x_0)\in \Omega_{g,h}$ such that
 $$w(t_0,x_0)=\max_{(t,x)\in  \Omega_{g,h,T}} w(t,x)>0,$$
 then $\partial_t w(t_0,x_0)\geq0$ and
 $$(-\Delta)^sw(t_0,x_0)=c_{1,s}\int_{\R}\frac{w(t_0,x_0)-w(t,y)}{|x_0-y|^{1+2s}}>0, $$
 and hence
$$\partial_t w(t_0,x_0)+(-\Delta)^sw(t_0,x_0) >0,  $$
which, however, is a contradiction to
 \begin{align*}
 \partial_t w(t_0,x_0)+(-\Delta)^sw(t_0,x_0)&=-L_0  w(t_0,x_0)+ e^{-L_0t}\big[f_0(t_0,x_0,u_1(t_0,x_0))- f_0(t_0,x_0,u_2(t_0,x_0))\big]
\\[1mm]& \leq  -L_0  w(t_0,x_0)+L_0|w(t_0,x_0)|=0.
  \end{align*}
 Similarly $\min_{(t,x)\in  \Omega_{g,h,T}} w(t,x)<0$ also leads to a contradiction.
 As a consequence, we have  $w\equiv 0$, as desired.
\hfill$\Box$\medskip

  \setcounter{equation}{0}
\section{Free boundary problem}

 Our aim in this section is to establish the well-posedness of  (\ref{eq 1.1}) by proving Theorem \ref{teo 1}. Throughout this section, the conditions of Theorem \ref{teo 1} are always assumed.

 \subsection{Existence}
We prove the existence part of Theorem \ref{teo 1} in this subsection, which will need the following result.

\begin{lemma}\label{u-c} Suppose $(g_*,h_*), (g_n, h_n)\in \bG_{h_0, T}\times \bH_{h_0, T}$ for $n=1,2,...$.  Let $u_*$ and $u_n$ be the solution of \eqref{eq-u} with $(g, h)=(g_*, h_*)$ and $(g,h)=(g_n, h_n)$, respectively.  If $(g_n, h_n)\to (g_*, h_*)$ in $C([0, T])\times C([0, T])$ as $n\to\infty$, then
$u_n\to u_*$ in $L^\infty([0, T]\times \R)$.
  \end{lemma}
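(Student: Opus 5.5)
The strategy is to compare $u_n$ and $u_*$ on a common domain, where the comparison principle applies, and to control the mismatch of the domains with the boundary estimate \eqref{ab-2}. The key fact is that the constant $C_0$ in \eqref{ab-2} depends only on $\|u_0\|_\infty$, $f$, $s$ and $T$, and not on $(g,h)$. Hence all the $u_n$ and $u_*$ satisfy
\[
0\le u(t,x)\le C_0\Big(1\land \frac{\rho^s(t,x)}{\sqrt t}\Big)
\]
with the same $C_0$. Set $\epsilon_n:=\|g_n-g_*\|_\infty+\|h_n-h_*\|_\infty\to0$.

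First we bound the mismatch at the boundary and outside the domains. By \eqref{bound-1}, all solutions are bounded by $K_1$. On the set $\{x:\ x\in \cO^n_t\setminus \cO^*_t\}$, the distance to $\partial\cO^n_t$ is at most $\epsilon_n$. So $u_n\le C_0\epsilon_n^s/\sqrt t$ there, and symmetrically $u_*\le C_0\epsilon_n^s/\sqrt t$ on $\cO^*_t\setminus\cO^n_t$. The factor $1/\sqrt t$ is a nuisance for small $t$, so we handle small times separately.

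For small times $t\in(0,\tau]$, we use the continuity of $u_n$ and $u_*$ at $t=0$, uniform in $n$. Both are squeezed between the solution $\bar u_0$ on the fixed smallest domain, which contains $\cO_0$, and the supersolution $w_*$ from Step 2 of Theorem \ref{thm-semi}. Here $w_*$ solves the problem on a fixed large rectangle $(0,T]\times(-H,H)$ containing all $\cO^n_t$, with constant forcing $\bar K$ (after the exponential transformation). Both $\bar u_0$ and $w_*$ are independent of $n$ and converge to $u_0$ uniformly by Lemma \ref{lem-I}. For the lower barrier we may take the solution with $f$ replaced by $-L_0u$ on $(0,T]\times\cO_0$, which is a subsolution for every $n$ since $f(t,x,u)\ge -L_0u$ on $[0,K_1]$. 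Hence, given $\delta>0$, there is $\tau>0$ with $|u_n-u_*|\le\delta$ on $[0,\tau]\times\R$ for all $n$.

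For $t\in[\tau,T]$, we use comparison via shifted domains. Let $\Omega^n_{\wedge}$ be the intersection domain, with $g^\vee=\max(g_n,g_*)$ and $h^\wedge=\min(h_n,h_*)$. These are monotone, so Lemma \ref{lm comp-1} applies. Set $w=e^{-L_0t}(u_n-u_*)$ on $\Omega^n_\wedge\cap\{t\ge\tau\}$. Outside $\cO^\wedge_t$, for $t\ge\tau$, we have $|w|\le C_0\epsilon_n^s/\sqrt\tau$ from the first step, and $|w(\tau,\cdot)|\le\delta$. As in Step 3 of Theorem \ref{thm-semi}, $w$ satisfies $w_t+d(-\Delta)^sw\le -L_0w+L_0|w|$. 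Applying Lemma \ref{lm comp-1} to $\max\{\delta, C_0\epsilon_n^s\tau^{-1/2}\}-w$ and to its counterpart for $-w$ (constants satisfy the equation with $C(t,x)$ built from the Lipschitz quotient, which is bounded), we get $|w|\le \max\{\delta,C_0\epsilon_n^s\tau^{-1/2}\}$ in the intersection domain. The region outside the intersection is already covered by the first step. Therefore
\[
\limsup_n\|u_n-u_*\|_{L^\infty}\le e^{L_0T}\delta,
\]
and letting $\delta\to0$ proves the lemma.

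The main obstacle is the time shift in this last step. Lemma \ref{lm comp-1} is stated with initial time $0$ and with exterior data compared pointwise. I will apply it after translating time by $\tau$, using the linearisation $f(u_n)-f(u_*)=c(t,x)(u_n-u_*)$ with $|c|\le L_0$. The uniform smallness at $t=0$ also needs care. If the barrier route is awkward, the fallback is a direct heat-kernel argument: Duhamel on the largest rectangle with the $\sqrt t$ bound gives $|u-u_0|\le \|u_0-\mathbf H*u_0\|+Ct$, uniformly in $n$.
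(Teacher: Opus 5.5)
Your proof is correct, but it reaches the conclusion by a different route from the paper. The ingredients you share with the paper are two. First, the squeeze between $n$-independent barriers (heat flow on $(-h_0,h_0)$ below, constant forcing $\bar K$ above) gives $u_n\to u_0$ as $t\to0^+$ uniformly in $n$. Second, the $(g,h)$-independent constant $C_0$ in \eqref{ab-2} makes everything small near the lateral boundaries for $t\ge\delta$. The paper places its upper barrier on the curved domain $(g_*(t)-1,h_*(t)+1)$; your fixed rectangle lets Lemma \ref{lem-I} apply directly. The real difference is the main step. The paper uses the interior estimates of Lemma \ref{lemmajfa} to extract a subsequence converging in $C^{1+\frac{\alpha}{2s},2s+\alpha}_{\mathrm{loc}}(\Omega_{g_*,h_*,T})$ to some $\tilde u$. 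It then upgrades this to $L^\infty$ convergence with the boundary estimate, checks that $\tilde u$ is a classical solution on $\Omega_{g_*,h_*,T}$, and invokes uniqueness for \eqref{eq-u} to get $\tilde u=u_*$ and hence convergence of the whole sequence. You instead compare $u_n$ and $u_*$ directly on the intersection domain $\{\max(g_n,g_*)<x<\min(h_n,h_*)\}$, which is nondecreasing in $t$, so Lemma \ref{lm comp-1} applies after the time shift to $t=\tau$. The zeroth-order coefficient there is $L_0-c\in[0,2L_0]$, with $c$ the Lipschitz quotient, and the exterior and initial data are controlled by the first two steps. This is in effect a quantitative stability version of the uniqueness argument in Step 3 of Theorem \ref{thm-semi}. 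It needs no H\"older estimates, no Arzel\`a--Ascoli and no identification of a limit, and it handles the full sequence at once. It also gives the explicit bound $\|u_n-u_*\|_{L^\infty}\le e^{L_0T}\max\{\delta,\,C_0\epsilon_n^s\tau(\delta)^{-1/2}\}$. What the paper's route buys in addition is local $C^{1+\frac{\alpha'}{2s},2s+\alpha'}$ convergence inside $\Omega_{g_*,h_*,T}$. That is more than the lemma asserts, and more than the continuity of $\mathcal A$ in Theorem \ref{thm-ex} requires, since only $L^\infty$ convergence is used there.
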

  \begin{proof} Firstly by \eqref{bound-1}, we have that
  \[
  0\leq u_n\leq K_1 \mbox{ for all } n\geq 1.
  \]
  Secondly, much as in the proof of Theorem \ref{thm-semi}, where the sequence $\{\bar u_i\}$ was shown to have a convergent subsequence by making use of the interior regularity of the solutions, we can similarly show that the sequence $\{u_n\}$ here has a convergent subsequence, which for simplicity of notation is assumed to be $\{u_n\}$ itself, such that $u_n\to \tilde u$ in $C_{\loc}^{1+\frac{\alpha}{2s}, 2s+\alpha}(\Omega_{g_*,h_*, T})$ for some $\tilde u$ and $\alpha\in (0, 1)$.

  We next use the boundary estimate for $u_n$ and $(g_n, h_n)\to (g_*, h_*)$ to show that $u_n\to\tilde u$ in $L^\infty([0, T]\times \R)$ and $\tilde u$ solves \eqref{eq-u} with $(g,h)$ replaced by $(g_*, h_*)$. This would imply $\tilde u=u_*$ by the uniqueness of the solution, which in turn implies that the entire original sequence $\{u_n\}$ converges to $u_*$ in $C_{\loc}^{1+\frac{\alpha}{2s}, 2s+\alpha}(\Omega_{g_*,h_*, T})\cap L^\infty([0, T]\times \R)$.

  We now set to prove $u_n\to\tilde u$ in $L^\infty([0, T]\times \R)$. Let $\bar u_n:=e^{L_0t}u_n$. Then $\bar u_n$ solves \eqref{eq 2.1} with $(g,h)$ replaced by $(g_n, h_n)$. Let $\bar K:=\sup_{t\in[0,T],x\in\R,\tau\in[0,K_1]} F_0(t,x,\tau)$. Since  $(g_n, h_n)\to (g_*, h_*)$ in $C([0, T])\times C([0, T])$, without loss of generality we may assume $[g_n(t), h_n(t)]\subset (g_*(t)-1, h_*(t)+1) $ for all $t\in [0, T]$ and $n\geq 1$.

  Let $w_{*}$  be the solution of
$$
\left\{
\begin{array}{lll}
\partial_t  u +d(-\Delta)^s  u =\bar K    \quad &{\rm for}\ \,    t\in (0, T], \ x\in (g_*(t)-1, h_*(t)+1),\\[3mm]
  u(t,x)=0  \quad  &{\rm for}\ \,  t\in (0, T],\ x\in \R\setminus  (g_*(t)-1, h_*(t)+1),\\[3mm]
u(0, x)=u_0(x)  & {\rm for}\ \, x\in\R,
\end{array}\right.
$$
and $v_*$ be the solution of
$$
\left\{
\begin{array}{lll}
\partial_t  u +d(-\Delta)^s  u =0    \quad &{\rm for}\ \,    t\in (0, T], \ x\in (-h_0, h_0),\\[3mm]
  u(t,x)=0  \quad  &{\rm for}\ \,  t\in (0, T],\ x\in \R\setminus  (-h_0, h_0),\\[3mm]
u(0, x)=u_0(x)  & {\rm for}\ \, x\in\R,
\end{array}\right.
$$

Then it follows from the maximum principle that $v_*\leq \bar u_n\leq w_*$ for all $n\geq 1$, which implies $v_*\leq \tilde u\leq w_*$. Since $v_*(t,x)\to u_0(x)$ and $w_*(t,x)\to u_0(x)$ in $L^\infty(\R)$ as $t\to 0^+$,
we deduce
\begin{equation}\label{t-to-0}
u_n(t,x)=e^{-L_0 t}\bar u_n(t,x)\to u_0(x) \mbox{ and $\tilde u(t,x)\to u_0(x)$ in $L^\infty(\R)$ as $t\to 0^+$, uniformly in $n$.}
\end{equation}
Therefore, for any given small $\epsilon>0$, we can find $\delta>0$, depending on $\epsilon$, so that
\begin{equation}\label{near-t=0}
|u_n(t,x)-\tilde u(t,x)|\leq \epsilon\ \,  \mbox{ for } t\in (0,\delta],\ x\in\R,\ n\geq 1.
\end{equation}

Denote
\[
\rho_n(t, x):=\begin{cases}\min\{h_n(t)-x, x-g_n(t)\} & \mbox{ if } x\in (g_n(t), h_n(t)),\\
0 &\mbox{ otherwise}, \end{cases} \]
\[
 \rho_*(t,x):=\begin{cases} \min\{h_*(t)-x, x-g_*(t)\} & \mbox{ if } x\in (g_*(t), h_*(t)),\\
0 &\mbox{ otherwise}. \end{cases}
\]
By \eqref{ab-2}, we have
\[
\displaystyle 0\leq u_n(t,x)\leq \frac{C_0}{\sqrt{\delta}}\rho_n^s(t, x)\ \,  \mbox{ for } t\in [\delta, T],\ x\in \R.
\]
Letting $n\to\infty$, we obtain
\begin{equation}\label{tilde-u-bdy}
\displaystyle 0\leq \tilde u(t, x)\leq \frac{C_0}{\sqrt{\delta}}\rho_*^s(t, x)\ \,  \mbox{ for } t\in [\delta, T],\ x\in \R.
\end{equation}
For any small $\hat \epsilon>0$, we can find $N=N(\hat \epsilon)>0$ large, depending on $\hat\epsilon$, so that
\[
|g_n(t)-g_*(t)|+|h_n(t)-h_*(t)|\leq \hat\epsilon\ \,  \mbox{ for } t\in [0, T],\ n\geq N.
\]
It follows that
\[
\rho_n(t,x)\leq \rho_*(t,x)+\hat\epsilon\ \, \mbox{ for } t\in [0, T],\  x\in\R,\ n\geq N.
\]
Therefore, for $t\in [\delta, T]$ and $x\in\R\setminus (g_*(t)+\hat\epsilon, h_*(t)-\hat\epsilon)$, we have
\[
\begin{cases}
\displaystyle 0\leq u_n(t,x)\leq \frac{C_0}{\sqrt{\delta}}\rho_n^s(t, x)\leq \frac{C_0}{\sqrt{\delta}}(2\hat\epsilon)^s\\[3mm]
\displaystyle 0\leq \tilde u(t, x)\leq \frac{C_0}{\sqrt{\delta}}\rho_*^s(t, x) \leq \frac{C_0}{\sqrt{\delta}} (\hat\epsilon)^s.
\end{cases}
\]
  By choosing $\hat\epsilon>0$ small enough, we deduce from the above estimates that
  \begin{equation}\label{near-bdy}
  |u_n(t,x)-\tilde u(t,x)|\leq\epsilon \mbox{ for  $t\in [\delta, T]$,\  $x\in\R\setminus (g_*(t)+\hat\epsilon, h_*(t)-\hat\epsilon)$ and $n\geq N(\hat\epsilon)$.}
  \end{equation}
  For $t\in [\delta, T]$ and $x\in [g_*(t)+\hat\epsilon, h_*(t)-\hat\epsilon]$, due to $u_n\to \tilde u$ in $C_{\loc}^{1+\frac{\alpha'}{2s}, 2s+\alpha'}(\Omega_{g_*,h_*, T})$, we can find $\tilde N=\tilde N(\delta,\hat\epsilon)>0$ large so that, for such $t$ and $x$,
  \[
  |u_n(t,x)-\tilde u(t,x)|\leq\epsilon\ \,  \mbox{ for } n\geq \tilde N.
  \]
  This, combined with \eqref{near-t=0} and \eqref{near-bdy}, shows that
  \[
  |u_n(t,x)-\tilde u(t,x)|\leq\epsilon\ \, \mbox{ for } t\in [0, T],\ x\in\R,
  \]
  provided that $n\geq \max\{N(\hat\epsilon), \tilde N(\delta, \hat\epsilon)\}$. We have thus proved $u_n\to \tilde u$ in $L^\infty([0,T]\times \R)$.
  From $u_n\to \tilde u$ in $C_{\loc}^{1+\frac{\alpha'}{2s}, 2s+\alpha'}(\Omega_{g_*,h_*, T})$ we know that $\tilde u$ satisfies \eqref{eq-u} with $(g,h)$ replaced by $(g_*, h_*)$ point-wisely in $\Omega_{g_*,h_*, T}$. By \eqref{t-to-0} we know that $\tilde u$ satisfies the initial condition in the classical sense, and it follows from \eqref{tilde-u-bdy}  that $\tilde u$ also satisfies the boundary condition in the classical sense. Therefore $\tilde u=u_*$ by uniqueness of the solution to \eqref{eq-u}. The proof is now complete.
   \end{proof}

 \begin{theorem}[Existence]\label{thm-ex}
 Problem \eqref{eq 1.1} has at least one classical solution.
 \end{theorem}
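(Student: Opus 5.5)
The plan is to apply the Schauder fixed point theorem to the free boundaries. Fix $T>0$. For $(g,h)\in\bG_{h_0,T}\times\bH_{h_0,T}$, let $u=u_{g,h}$ be the unique classical solution of \eqref{eq-u} given by Theorem \ref{thm-semi}. Define $\Gamma(g,h):=(\tilde g,\tilde h)$ by
\[
\tilde h(t)=h_0+\frac{\mu}{2s}\int_0^t\!\int_{g(\tau)}^{h(\tau)}u(\tau,x)\,[h(\tau)-x]^{-2s}dx\,d\tau,\qquad
\tilde g(t)=-h_0-\frac{\mu}{2s}\int_0^t\!\int_{g(\tau)}^{h(\tau)}u(\tau,x)\,[x-g(\tau)]^{-2s}dx\,d\tau .
\]
Here I use the identity $\int_h^\infty|x-y|^{-1-2s}dy=\frac1{2s}(h-x)^{-2s}$. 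A fixed point of $\Gamma$ is exactly a solution of \eqref{eq 1.1} on $[0,T]$. Since $u\ge0$, $\tilde h$ is nondecreasing and $\tilde g$ is nonincreasing.

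\textbf{A priori bound on the flux.} The key estimate comes from the boundary estimate \eqref{ab-2}, whose constant $C_0$ depends only on $\|u_0\|_\infty$, $f$, $s$, $T$ and not on $(g,h)$. Split the $x$-integral according to whether $\rho<t^{1/(2s)}$ or not. On the part where $\rho<t^{1/(2s)}$, use $u\le C_0\rho^s/\sqrt t$. On the remaining part, use $u\le C_0$. Both pieces are bounded by
\[
C\big(1+t^{\frac{1-2s}{2s}}\big)\qquad\text{if } s\neq \tfrac12,
\]
with an extra logarithmic factor $\log(1/t)$ when $s=\tfrac12$. In all cases the bound is $\le C(1+t^{-\gamma})$ for some $\gamma<1$, with $C$ depending also on an a priori upper bound for $h(T)-g(T)$ (the bounded part of the domain enters only through its length). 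Hence
\[
0\le \tilde h(t_2)-\tilde h(t_1)\le M\int_{t_1}^{t_2}(1+\tau^{-\gamma})\,d\tau=:\omega(t_1,t_2),
\]
and similarly for $\tilde g$. Choose $\Sigma_T$ to be the set of pairs $(g,h)\in\bG_{h_0,T}\times\bH_{h_0,T}$ whose increments satisfy this modulus bound. Then $h(T)-g(T)$ is bounded on $\Sigma_T$, so $M$ can be fixed consistently, and $\Gamma(\Sigma_T)\subset\Sigma_T$. The set $\Sigma_T$ is convex and closed in $C([0,T])^2$, and it is compact by Arzel\`a--Ascoli.

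\textbf{Continuity of $\Gamma$.} This is the main obstacle. Suppose $(g_n,h_n)\to(g_*,h_*)$ in $\Sigma_T$. Lemma \ref{u-c} gives $u_n\to u_*$ uniformly on $[0,T]\times\R$. The flux integrand $u_n(h_n-x)^{-2s}$ is singular, so uniform convergence of $u_n$ alone is not enough. I would handle it as follows.
\begin{itemize}
\item Uniform integrability: the bound $C_0\min\{1,\rho_n^s/\sqrt t\}\rho_n^{-2s}$, with $\rho_n$ measured from $g_n$ or $h_n$, is uniform in $n$. So the contribution from $x$ within $\epsilon'$ of the boundaries, together with that from $t<\delta$, is small uniformly in $n$.
\item Interior convergence: away from the boundaries, uniform convergence of $u_n$ and of the endpoints gives convergence of the integrals.
\end{itemize}
Together these show $\tilde h_n\to\tilde h_*$ and $\tilde g_n\to\tilde g_*$ uniformly on $[0,T]$.

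\textbf{Conclusion.} Schauder's theorem yields a fixed point $(g,h)\in\Sigma_T$. To see that $h'(t)$ equals the flux classically for $t>0$, I need the flux to be continuous in $t\in(0,T]$. This follows from continuity of $u$ and of $g,h$, combined with the same dominated convergence bound as above. This gives a classical solution on $[0,T]$ for every $T>0$. For a solution defined for all $t>0$, the cleanest route is to invoke the uniqueness proved later in Section 4.2, which makes the solutions for different $T$ consistent. Alternatively, a diagonal argument over $T=k\to\infty$ works: the estimates above are uniform on each $[0,k]$, so one extracts a locally uniformly convergent subsequence of the fixed points.
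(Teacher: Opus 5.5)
Your proposal is correct and follows essentially the paper's route: Schauder's theorem for the boundary map built from Theorem \ref{thm-semi}, flux bounds from the $(g,h)$-independent boundary estimate in that theorem, and continuity via Lemma \ref{u-c} together with uniformly small boundary-strip and small-time contributions. The differences are cosmetic. You build equicontinuity into a compact invariant set rather than proving the map completely continuous on a bounded set, and you use the sharper bound $\min\{1,\rho^s/\sqrt t\}$ where the paper's cruder $\rho^s/\sqrt t$ already gives the integrable rate $t^{-1/2}$. One step should be made explicit: choosing $M$ consistently relies on the flux constant growing sublinearly in $h(T)-g(T)$ (like $L^{1-2s}$, $\log L$, or not at all), which is the analogue of the paper's choice $R=2h_0+2\hat C_0\sqrt T R^{1-s}$.
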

 \begin{proof}  It suffices to show that for any given $T>0$, there is a solution for $t\in (0, T]$.

 Fix $T>0$, we are going to reduce the question to a fixed point problem.
 For $R>0$  to be determined, we define
 \[\mathcal D_T(R):=\{(g,h)\in\bG_{h_0,T}\times \bH_{h_0, T}: \|g\|_\infty+\|h\|_\infty \leq R\}.
 \]
 Clearly $\mathcal D_T(R)$ is a bounded closed convex set in $C([0,T])\times C([0,T])$.
 For each $(g,h)\in\mathcal D_T(R)$, by Theorem \ref{thm-semi}, the problem \eqref{eq-u} has a unique solution $u=u^{g,h}$. We then define, for $t\in [0, T]$,
 \[
 \mathcal A(g,h)(t)=(\bar g(t), \bar h(t))
 \]
 with
\[\begin{cases}
\displaystyle  \bar h(t):= h_0+\mu \int_0^t \int_{g(\tau)}^{h(\tau)} u^{g,h}(\tau,x)[h(\tau )-x]^{-2s} dxd\tau,\\[4mm]
\displaystyle \bar g(t):= -h_0-\mu \int_0^t\int_{g(\tau)}^{h(\tau)} u^{g,h}(\tau,x)[x-g(\tau )]^{-2s} dxd\tau.
\end{cases}
\]
Clearly $\mathcal A(g, h)\in \bG_{h_0, T}\times \bH_{h_0, T}$. We are going to show that for suitably chosen  $R$, the operator $\mathcal A$ maps $\mathcal D_T(R)$ into itself and is completely continuous, which would guarantee the existence of a fixed point $(g^*, h^*)$ by Schauder's fixed point theorem, giving rise to a solution $(u,g,h)=(u^{g^*\!\!, h^*}, g^*, h^*)$ of \eqref{eq 1.1}.

By \eqref{ab-2},
\[
u^{g,h}(t,x)\leq \frac{C_0}{\sqrt{t}}\, \rho^s(t,x) \mbox{ for } t\in (0, T],\ x\in (g(t), h(t)).
\]
It follows that
\begin{align*}
\bar h(t)&\leq h_0+\mu \int_0^t \int_{g(\tau)}^{h(\tau)} \frac{C_0}{\sqrt{\tau}}[h(\tau)-x]^{-s} dxd\tau\\
&=h_0+\mu \int_0^t \frac{C_0}{\sqrt{\tau}}\frac{[h(\tau)-g(\tau)]^{1-s}}{1-s} d\tau\\
&\leq h_0+\mu \frac{[h(t)-g(t)]^{1-s}}{1-s}\int_0^t  \frac{C_0}{\sqrt{\tau}}d\tau  \\
&=h_0+\hat C_0 \sqrt{t}\, [h(t)-g(t)]^{1-s} \mbox{ for } t\in (0, T],
\end{align*}
with $\hat C_0>0$ depending only on $\mu$, $\|u_0\|_\infty$, $f$, $s$ and $T$.

Similarly,
\[
\bar g(t)\geq -h_0-\hat C_0 \sqrt{t}\, [h(t)-g(t)]^{1-s} \mbox{ for } t\in (0, T].
\]
Therefore,
\[
\bar h(t)-\bar g(t)\leq 2h_0+2\hat C_0 \sqrt{T}\, [h(t)-g(t)]^{1-s} \mbox{ for } t\in (0, T].
\]

We now  let $R=R_T>0$ be the unique solution of
\[
R= 2h_0+2\hat C_0 \sqrt{T} R^{1-s}.
\]
Then  $h(t)-g(t)\leq R$ implies $\bar h(t)-\bar g(t)\leq R$. Since $\|g\|_\infty+\|h\|_\infty=h(T)-g(T)$,
we thus see $\mathcal A$ maps $\mathcal D_T(R)$ into itself with the above chosen $R$.

We next show that $\mathcal A$ is continuous in $\mathcal D_T(R)$. Suppose $(g_*,h_*), (g_n, h_n)\in \mathcal D_T(R)$ for $n=1,2,...$, and $(g_n, h_n)\to (g_*, h_*)$ in $C([0, T])\times C([0, T])$ as $n\to\infty$.  We need to show that $(\bar h_n, \bar g_n):=\mathcal A(g_n, h_n)\to (\bar g_*, \bar h_*):=\mathcal A(g_*, h_*)$ in $C([0, T])\times C([0, T])$.

Let $u_*$ and $u_n$ be the solution of \eqref{eq-u} with $(g, h)=(g_*, h_*)$ and $(g,h)=(g_n, h_n)$, respectively.  By Lemma \ref{u-c}, we have $u_n\to u_*$ in $L^\infty([0,T]\times\R)$ as $n\to\infty$. It follows that, for any $\delta>0$ small,
\begin{equation}\label{delta-in}
\begin{cases}
\displaystyle \mu \int_0^t \int_{g_n(\tau)+\delta}^{h_n(\tau)-\delta } u_n(\tau,x)[h_n(\tau )-x]^{-2s} dxd\tau\to \mu \int_0^t \int_{g_*(\tau)+\delta}^{h_*(\tau)-\delta } u_*(\tau,x)[h_*(\tau )-x]^{-2s} dxd\tau,\\[4mm]
\displaystyle \mu \int_0^t \int_{g_n(\tau)+\delta}^{h_n(\tau)-\delta } u_n(\tau,x)[x-g_n(\tau )]^{-2s} dxd\tau\to \mu \int_0^t \int_{g_*(\tau)+\delta}^{h_*(\tau)-\delta } u_*(\tau,x)[x-g_*(\tau )]^{-2s} dxd\tau,
\end{cases}
\end{equation}
uniformly for $t\in [0, T]$ as $n\to+\infty$.

Define $\rho_n(t,x)$ and $\rho_*(t,x)$ as in the proof of Lemma \ref{u-c}.
By \eqref{ab-2},
\[
u_n(t,x)\leq \frac{C_0}{\sqrt{t}}\, \rho_n^s(t,x) \mbox{ for } t\in (0, T],\ x\in (g_n(t), h_n(t)).
\]
It follows that
\begin{align*}
&\quad \mu \int_0^t \int^{h_n(\tau)}_{h_n(\tau)-\delta } u_n(\tau,x)[h_n(\tau )-x]^{-2s} dxd\tau\\
&\leq \mu \int_0^t \int_{h_n(\tau)-\delta}^{h_n(\tau)} \frac{C_0}{\sqrt{\tau}}[h_n(\tau)-x]^{-s} dxd\tau\\
&=\frac{2\mu C_0}{1-s}\sqrt{t} \delta^{1-s}\leq \frac{2\mu C_0}{1-s}\sqrt{T} \delta^{1-s}\ \,   \mbox{ for all $t\in [0, T]$.}
\end{align*}
 Similarly,
\begin{align*}
&\quad \mu \int_0^t \int_{g_n(\tau)}^{g_n(\tau)+\delta } u_n(\tau,x)[x-g_n(\tau )]^{-2s} dxd\tau\\
&\leq \mu \int_0^t \int^{g_n(\tau)+\delta}_{g_n(\tau)} \frac{C_0}{\sqrt{\tau}}[x-g_n(\tau)]^{-s} dxd\tau\\
&=\frac{2\mu C_0}{1-s}\sqrt{t} \delta^{1-s}\leq \frac{2\mu C_0}{1-s}\sqrt{T} \delta^{1-s}\ \,    \mbox{ for all $t\in [0, T]$,}
\end{align*}
and
\[
\begin{cases}
\displaystyle\mu \int_0^t \int^{h_*(\tau)}_{h_*(\tau)-\delta } u_*(\tau,x)[h_*(\tau )-x]^{-2s} dxd\tau\leq \frac{2\mu C_0}{1-s}\sqrt{T} \delta^{1-s},\\[4mm]
\displaystyle\mu \int_0^t \int_{g_*(\tau)}^{g_*(\tau)+\delta } u_*(\tau,x)[x-g_*(\tau )]^{-2s} dxd\tau\leq \frac{2\mu C_0}{1-s}\sqrt{T} \delta^{1-s}
\end{cases}  \mbox{ for } t\in [0, T].
\]
These estimates and \eqref{delta-in} imply
\[\begin{cases}
\displaystyle \limsup_{n\to+\infty} \bar h_n(t)\leq \bar h_*(t)+2\frac{2\mu C_0}{1-s}\sqrt{T} \delta^{1-s},\\[2mm]
\displaystyle  \liminf_{n\to+\infty} \bar h_n(t)\geq \bar h_*(t)-2\frac{2\mu C_0}{1-s}\sqrt{T} \delta^{1-s},
\end{cases}\]
and the limits are uniform in $t\in [0, T]$.
Since $\delta>0$ can be arbitrarily small, this implies
\[
\lim_{n\to+\infty} \bar h_n(t)= \bar h_*(t) \mbox{ uniformly for } t\in [0, T].
\]
Analogously we have
\[
\lim_{n\to+\infty} \bar g_n(t)= \bar g_*(t) \mbox{ uniformly for } t\in [0, T].
\]
The continuity of $\mathcal A$ is now proved.

It remains to show that $\mathcal A$ is compact. Let $\{(g_n, h_n)\}\subset \mathcal D_T(R)$ be any sequence. We want to show that  $(\bar h_n, \bar g_n):=\mathcal A (g_n, h_n)$ has a convergent subsequence in $C([0, T])\times C([0, T])$. We have
\[
\bar h_n'(t)=\mu  \int_{g_n(t)}^{h_n(t)} u_n(t,x)[h_n(t)-x]^{-2s} dx>0 \mbox{ for } t\in (0, T],
\]
where $u_n:=u^{g_n, h_n}$. Since by \eqref{ab-2},
\[
u_n(t,x)\leq \frac{C_0}{\sqrt{t}}\, \rho_n^s(t,x) \mbox{ for } t\in (0, T],\ x\in (g_n(t), h_n(t)),
\]
we have
\[
0<\bar h'_n(t)\leq \mu  \int_{g_n(t)}^{h_n(t)} \frac{C_0}{\sqrt{t}}[h_n(t)-x]^{-s} dx=\frac{\mu C_0}{1-s}[h_n(t)-g_n(t)]^{1-s}/\sqrt{t}\leq \hat C_0/\sqrt{t}
\]
for $t\in (0, T]$, where $\hat C_0:=\frac{\mu C_0}{1-s} R^{1-s}$. Hence
\[
0\leq \bar h_n(t_1)-\bar h_n(t_2)\leq 2\hat C_0(\sqrt{t_1}-\sqrt{t_2}) \mbox{ for } 0\leq t_2\leq t_1\leq T.
\]
This implies that $\{\bar h_n(t)\}$ is equicontinuous over $[0, T]$. Similarly $\{\bar g_n(t)\}$ is equicontinuous in $[0, T]$.
By the Arzela-Ascoli  theorem, we easily see that $\{(\bar g_n, \bar h_n)\}$ has a convergent subsequence in $C([0, T])\times C([0, T])$, as desired.
 \end{proof}

 \subsection{Uniqueness} In this subsection, we prove the uniqueness part of Theorem \ref{teo 1}.
 We will need two lemmas. While the first is a rather natural comparison result,  the second, however, introduces a novel entropy (conservation) function, which will enable us to give a very special uniqueness proof, strikingly different from all the existing ones for related problems (where the approach is  based on the contraction mapping theorem).

 \begin{lemma}\label{c-p} Suppose that $(u_1,g_1,h_1)$ and $(u_2, g_2, h_2)$ are solutions of \eqref{eq 1.1} with $(u_0, h_0)=(u_0^1, h_0^1)$ and
 $(u_0, h_0)=(u_0^2, h_0^2)$, respectively, where $u_0^i\in \mathcal I_0((-h_0^i, h_0^i))$, $i=1,2$. If $h_0^1<h_0^2$ and $u_0^1\leq u_0^2$, then
 \[
 \mbox{$[g_1(t), h_1(t)]\subset (g_2(t), h_2(t))$ and
 $u_1(t,x)<u_2(t,x)$ for $t\in (0, T]$, $x\in [g_1(t), h_1(t)]$.}
 \]
  \end{lemma}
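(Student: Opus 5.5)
We follow the classical "first touching time" argument used for \eqref{A} and \eqref{nonlocal}, with Lemma \ref{lm comp-1} doing the work of the strong maximum principle. Since $h_0^1<h_0^2$, continuity of the free boundaries gives a maximal $t^*\in(0,T]$ such that
\[
[g_1(t),h_1(t)]\subset (g_2(t),h_2(t)) \quad\text{for } t\in[0,t^*).
\]
The strategy is to show that $u_1<u_2$ holds on this interval. We then show that the boundaries cannot touch at $t^*$ unless $t^*=T$, and that in the latter case the strict inclusion also holds at $T$.

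\textbf{Step 1: comparison on $[0,t^*)$.} Fix $\bar t<t^*$ and set $w=u_2-u_1$ on $(0,\bar t]\times\R$. Take the domain $\bO_t=(g_2(t),h_2(t))$, which is nondecreasing in $t$. Outside $\bO_t$ both functions vanish, because $u_1=0$ outside $(g_1,h_1)\subset(g_2,h_2)$. Write
\[
f(t,x,u_2)-f(t,x,u_1)=-C(t,x)\,w .
\]
Then $w$ satisfies $\partial_t w+d(-\Delta)^s w+C w=0$ in the interior of the region, with $C$ bounded by the local Lipschitz constant from {\bf (f1)}; the bound on $C$ uses the uniform $L^\infty$ bound \eqref{bound-1}. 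There is a technical point on the set $\{x\in(g_2,h_2)\setminus[g_1,h_1]\}$: there $w=u_2$ satisfies its own equation, and $u_1\equiv 0$ satisfies
\[
\partial_t u_1+d(-\Delta)^s u_1\le 0 = f(t,x,0),
\]
since $(-\Delta)^s u_1=-c\int u_1/|x-y|^{1+2s}\le 0$ there. Hence $w$ is a supersolution of the linearized problem pointwise in all of $\cQ_{\bar t}$. The initial datum $w(0,\cdot)=u_0^2-u_0^1\ge 0$, and $u_0^2\gneqq0$ on $(-h_0^2,h_0^2)\setminus[-h_0^1,h_0^1]$, where $u_0^1=0$. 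So the strict part of Lemma \ref{lm comp-1}, case \eqref{t=0}, yields $w>0$ in $\cQ_{\bar t}$. We also need $w>0$ on the closed set $x\in[g_1(t),h_1(t)]$. At endpoints of $[g_1,h_1]$ we have $u_1=0<u_2$, since they lie strictly inside $(g_2,h_2)$ and $u_2>0$ there by Lemma \ref{lm comp-1}.

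\textbf{Step 2: ruling out touching.} Suppose $t^*<T$, or $t^*=T$ with touching at $T$. Then, say, $h_1(t^*)=h_2(t^*)$ and $h_1(t)<h_2(t)$ for $t<t^*$; the case of $g$ is symmetric. It follows that $h_2'(t^*)\le h_1'(t^*)$ in the sense of the integrated free-boundary identity; we use the integral form because $h_i$ are absolutely continuous and $h_i'$ is continuous for $t>0$. On the other hand,
\[
h_i'(t^*)=\frac{\mu}{2s}\int_{g_i(t^*)}^{h_i(t^*)} u_i(t^*,x)\,[h_i(t^*)-x]^{-2s}\,dx .
\]
With $h_1(t^*)=h_2(t^*)=:H$ and $g_2(t^*)\le g_1(t^*)$, we compare the integrands. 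From Step 1 and continuity, $u_2(t^*,\cdot)\ge u_1(t^*,\cdot)$, and we apply the strict part of Lemma \ref{lm comp-1} on a slightly earlier interval to get strict inequality on an open set. Hence $h_2'(t^*)>h_1'(t^*)$, which is a contradiction.

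To make this rigorous without differentiability at $t^*$, we instead compare integrated forms:
\[
h_2(t^*)-h_2(t^*-\epsilon)>h_1(t^*)-h_1(t^*-\epsilon)
\]
for small $\epsilon$. The argument needs the integrands to be ordered on $[t^*-\epsilon,t^*]$, but for $t<t^*$ we have $h_1(t)<h_2(t)$, so the kernels $[h_i-x]^{-2s}$ differ. This is the main obstacle. We expect to handle it via continuity of $t\mapsto h_i'(t)$ for $t>0$, proved by dominated convergence using \eqref{ab-2} together with $u_i\in C$. With that continuity in hand, the contradiction follows at $t=t^*$ directly, since both sides are evaluated with the common endpoint $H$. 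Once touching is excluded, $t^*=T$ and the strict inclusion holds on $(0,T]$, with $u_1<u_2$ on $[g_1(t),h_1(t)]$ by Step 1.
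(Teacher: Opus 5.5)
Your overall route is the paper's: take a first touching time, compare $w=u_2-u_1$ before it, and derive a contradiction by comparing the two front speeds at the touching time, using the common endpoint $H$ and $g_2\le g_1$.

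\textbf{The strict positivity in Step 1 rests on a false claim.} Nothing in the hypotheses gives $u_0^2\gneqq 0$ on $(-h_0^2,h_0^2)\setminus[-h_0^1,h_0^1]$. For instance, $u_0^2=u_0^1$ is allowed, since $u_0^1\in\mathcal I_0((-h_0^2,h_0^2))$ as well.
\begin{itemize}
\item With your choice $\bO_t=(g_2(t),h_2(t))$, $w$ vanishes outside $\bO_t$, so \eqref{side} is unavailable. And \eqref{t=0} fails exactly when $u_0^2\equiv u_0^1$. So your argument for $w>0$ breaks down in that case.
\item That domain also does not fit the hypotheses $\partial_t w,(-\Delta)^s w\in C(\cQ_T)$ of Lemma \ref{lm comp-1}. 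At the moving points $x=g_1(t),h_1(t)$, which lie inside $\bO_t$, $u_1$ is in general neither differentiable in $t$ nor has a continuous fractional Laplacian.
\end{itemize}
The fix is the paper's choice $\bO_t=(g_1(t),h_1(t))$. There $u_1$ solves its own equation and $w\ge0$ outside $\bO_t$. Moreover, $u_2(t,\cdot)>0$ on $(g_2(t),h_2(t))\supsetneq[g_1(t),h_1(t)]$, a fact you already use at the endpoints. Hence $w(t,\cdot)=u_2(t,\cdot)\gneqq0$ outside $\bO_t$, and \eqref{side} gives $w>0$ whatever the initial data.

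\textbf{The ``main obstacle'' in Step 2 is not one.} The free-boundary equations hold pointwise for every $t>0$, so each $h_i$ is differentiable at $t^*$. Since $h_2-h_1>0$ on $(0,t^*)$ with equality at $t^*$, the left difference quotient already gives $h_2'(t^*)\le h_1'(t^*)$. This is all the paper uses; no continuity of $h_i'$ and no integrated comparison is needed.

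For the strict inequality $h_2'(t^*)>h_1'(t^*)$ you need $w(t^*,\cdot)\gneqq0$ at time $t^*$ itself. This matters only when both ends touch at once; otherwise the extra term $\int_{g_2(t^*)}^{g_1(t^*)}u_2(t^*,x)[H-x]^{-2s}\,dx>0$ suffices. Inequality on $[0,t^*)$ plus continuity only gives $w(t^*,\cdot)\ge0$. Instead, run the comparison on $\Omega_{g_1,h_1,t^*}$ up to and including $t^*$. Restart the strict part from a slightly earlier time where $w>0$, as in the last paragraph of the proof of Lemma \ref{lm comp-1}.
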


 \begin{proof}
 Since $h_0^1<h_0^2$, clearly $[g_1(t), h_1(t)]\subset (g_2(t), h_2(t))$ for all small $t>0$. We claim that
 \begin{equation}\label{g-h}
 [g_1(t), h_1(t)]\subset (g_2(t), h_2(t)) \mbox{  for all $t\in (0, T]$.}
 \end{equation}
  Otherwise, there exists $t_0\in (0, T]$ such that $[g_1(t), h_1(t)]\subset (g_2(t), h_2(t))$ for $t\in (0, t_0)$, and $g_1(t_0)=g_2(t_0)$ or $h_1(t_0)=h_2(t_0)$. For definiteness, we assume that $h_1(t_0)=h_2(t_0)$ and then
 \begin{equation}\label{t0}
 h_1'(t_0)\geq h_2'(t_0).
 \end{equation}
 Since $(g_1(t), h_1(t))\subset (g_2(t), h_2(t))$ for $t\in [0, t_0]$, and $u_0^1\leq u_0^2$, by applying the maximum principle to $u_2-u_1$ over $\Omega_{g_1, h_1, t_0}$ we deduce
 \[ u_1(t,x)<u_2(t,x) \mbox{ for } t\in (0, t_0],\ x\in (g_1(t), h_1(t)).
 \]
 It follows that
 \begin{align*}
 \displaystyle  h_2'(t_0)&=\mu  \int_{g_2(t_0)}^{h_2(t_0)} u_2(t_0,x)[h_2(t_0)-x]^{-2s} dx\\
 &=\mu  \int_{g_2(t_0)}^{h_1(t_0)} u_2(t_0,x)[h_1(t_0)-x]^{-2s} dx\\
 &\geq \mu  \int_{g_1(t_0)}^{h_1(t_0)} u_2(t_0,x)[h_1(t_0)-x]^{-2s} dx\\
 &>\mu  \int_{g_1(t_0)}^{h_1(t_0)} u_1(t_0,x)[h_1(t_0)-x]^{-2s} dx=h_1'(t_0),
 \end{align*}
 which is a contradiction to \eqref{t0}.
 This proves \eqref{g-h}, which allows us to apply the maximum principle to $u_2-u_1$ over $\Omega_{g_1, h_1, T}$ to deduce
 \[ u_1(t,x)<u_2(t,x) \mbox{ for } t\in (0, T],\ x\in (g_1(t), h_1(t)).
 \]
The proof is complete.
 \end{proof}
 \begin{remark}\label{rm-cp}
  From the above proof, we easily see that the assumptions that $(u_1,g_1,h_1)$ and $(u_2, g_2, h_2)$ are solutions of \eqref{eq 1.1} in Lemma \ref{c-p} can be relaxed to
 \[
\left\{
\begin{array}{lll}
\partial_t   u_1  +d(-\Delta)^s  u_1\leq  f(t,x, u_1)  \quad &{\rm for}\ \,   t\in(0,T],\ x\in\big(g_1(t),h_1(t)\big),\\[3mm]
 u_1(t,x)=0  \quad  &{\rm for}\ \, t\in(0,T],\ \, x\in \R \setminus\big(g_1(t),h_1(t)\big),\\[4mm]
\displaystyle  h_1'(t)\leq \mu \int_{g_1(t)}^{h_1(t)} u_1(t,x)[h_1(t)-x]^{-2s} dx  \ \ &{\rm for}\ \,t\in(0,T],\\[4mm]
\displaystyle g_1'(t)\geq -\mu \int_{g_1(t)}^{h_1(t)} u_1(t,x)[x-g_1(t)]^{-2s} dx   \ \  &{\rm for}\ \,t\in(0,T],\\[4mm]
 u_1(0,x)=u^1_0(x)  &{\rm for}\ \,  x\in [-h_0^1, h_0^1],\\[4mm]
 h_1(0)=-g_1(0)=h^1_0,
\end{array}\right.
 \]
 and
 \[
\left\{
\begin{array}{lll}
\partial_t   u_2  +d(-\Delta)^s  u_2\geq  f(t,x, u_2)  \quad &{\rm for}\ \,   t\in(0,T],\ x\in\big(g_2(t),h_2(t)\big),\\[3mm]
 u_2(t,x)\geq 0  \quad  &{\rm for}\ \, t\in(0,T],\ \, x\in \R \setminus\big(g_1(t),h_1(t)\big),\\[4mm]
\displaystyle  h_2'(t)\geq \mu  \int_{g_2(t)}^{h_2(t)} u_2(t,x)[h_2(t)-x]^{-2s} dx  \ \ &{\rm for}\ \,t\in(0,T],\\[4mm]
\displaystyle g_2'(t)\leq -\mu \int_{g_2(t)}^{h_2(t)} u_2(t,x)[x-g_2(t)]^{-2s} dx   \ \  &{\rm for}\ \,t\in(0,T],\\[4mm]
 u_2(0,x)=u^2_0(x)  &{\rm for}\ \,  x\in [-h_0^2, h_0^2],\\[4mm]
 h_2(0)=-g_2(0)=h^2_0,
\end{array}\right.
 \]
 provided that all the terms are well-defined. Moreover, the assumption $(g_i(0), h_i(0))=(-h_0^i, h_0^i)$ can be replaced by $(g_i(0), h_i(0))=\cO_i$ for any open intervals $\cO_i$ $(i=1,2)$,  as long as
 $\overline \cO_1\subset \cO_2$ and $u_0^1\leq u_0^2$ in $\cO_1$.
  \end{remark}

 Let $L_0$, $K_1$ be determined by ${\bf (f1)}$ as in \eqref{eq lam-0}.   Then
\begin{equation}\label{F1}
 \ F_1(t,x,\xi):=-L_0 \xi+   e^{-L_0 t}f(t,x, e^{L_0t} \xi)
 \end{equation}
  is decreasing in $\xi\in [0, e^{L_0T}K_1]$  for any fixed $(t,x)\in[0,T]\times \R$, and $F_1(t,x, 0)\equiv 0$.

  We now define, for any $(g,h)\in \bG_{h_0, T}\times \bH_{h_0, T}$ and
 any continuous function $u$ over $\overline \Omega_{g,h, T}$, the entropy function
 \begin{align*}
 \widehat\Psi_{(u,g,h)}(t):= &\int_{g(t)}^{h(t)}\!\!u(t,x)dx + \frac{d e^{-L_0t}}{\mu} \big(h(t)\!-\!g(t)\big)+\frac{dL_0}{\mu}\!\int_0^{t}\!\big(h(\tau)-g(\tau)\big) e^{-L_0\tau} d\tau\\
 & \ \ \
 -\int_0^{t}\!\!\int_{g(\tau)}^{h(\tau)} \!\! F_1(\tau,x,e^{-L_0\tau} u) dx d\tau.
 \end{align*}

 \begin{lemma}\label{lem-ent} The entropy function $\widehat \Psi_{(u,g,h)}(\cdot)$ is increasing in $u, h$ and $-g$ provided that $u\leq K_1$. Moreover, if
  $(u, g, h)$ is a solution of \eqref{eq 1.1}, then $\widehat \Psi_{(u,g,h)}(t)\equiv \widehat \Psi_{(u,g,h)}(0) $ for $t\in (0, T]$.
 \end{lemma}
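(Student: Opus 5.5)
The plan is to handle the two assertions separately: monotonicity by inspecting the sign of each term, and conservation by differentiating $\widehat\Psi$ in $t$ along a solution and checking that the terms cancel. The normalisations in $\widehat\Psi$ must be checked during that computation, as noted below.

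\emph{Monotonicity.} By construction $F_1(t,x,0)\equiv 0$ and $\xi\mapsto F_1(t,x,\xi)$ is decreasing on $[0,e^{L_0T}K_1]$. Hence $F_1(t,x,\xi)\le 0$ for $\xi$ in this range, and $-F_1(\tau,x,e^{-L_0\tau}u)$ is nonnegative and nondecreasing in $u$ as long as $0\le u\le K_1$. So both $\int_{g}^{h}u\,dx$ and $-\int_0^t\!\int_g^h F_1\,dx\,d\tau$ increase when $u$ increases pointwise. They also increase when the interval $[g(\tau),h(\tau)]$ is enlarged, since the added integrands $u\ge0$ and $-F_1\ge0$ are nonnegative; here $u$ is regarded as extended by $0$, as in \eqref{eq 1.1}. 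The terms $\frac{d}{\mu}e^{-L_0t}(h(t)-g(t))$ and $\frac{dL_0}{\mu}\int_0^t(h-g)e^{-L_0\tau}d\tau$ are clearly increasing in $h$ and $-g$. This gives the first claim.

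\emph{Conservation.} Let $(u,g,h)$ solve \eqref{eq 1.1}. I would compute $\frac{d}{dt}\int_{g(t)}^{h(t)}u(t,x)\,dx$. The boundary contributions $u(t,h(t))h'(t)$ and $u(t,g(t))g'(t)$ vanish, so the derivative equals $\int_{g}^{h}\partial_tu\,dx=\int_g^h\big[-d(-\Delta)^su+f(t,x,u)\big]dx$. The key identity is
\[
\int_{g(t)}^{h(t)}(-\Delta)^su(t,x)\,dx=c_{1,s}\int_{g(t)}^{h(t)}\!\!\Big(\int_{\R\setminus(g(t),h(t))}\frac{dy}{|x-y|^{1+2s}}\Big)u(t,x)\,dx .
\]
It holds because the double integral $\iint_{\cO_t\times\cO_t}\frac{u(x)-u(y)}{|x-y|^{1+2s}}\,dx\,dy$ is antisymmetric and hence zero, and only the interaction of $\cO_t$ with its complement survives. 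By the free boundary conditions, the right-hand side is a multiple of $(h'(t)-g'(t))/\mu$. Differentiating the remaining terms, the derivative of the third term cancels the $-L_0$ part of the derivative of the second, leaving $\frac d\mu e^{-L_0t}(h'-g')$. The last term contributes $-\int_g^h F_1\,dx$, and $F_1=e^{-L_0t}\big(f-L_0u\big)$ evaluated at $u$. Summing, the flux terms from $-d\int(-\Delta)^su$ and from $\frac{d}{\mu}e^{-L_0t}(h-g)$ must cancel, and likewise the reaction terms $\int f$ and $-\int F_1$.

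Carrying out this bookkeeping, I expect the weights not to match literally as written. In particular, the flux from $-d\int(-\Delta)^su$ carries no factor $e^{-L_0t}$ and a factor $c_{1,s}$ relative to $h'-g'$, whereas the boundary term carries $e^{-L_0t}$; similarly $\int f$ and $-\int F_1$ differ by a factor $e^{-L_0t}$ and a term $L_0u$. So I would first fix the consistent normalisation, most likely by reading the first term as $\int e^{-L_0t}u\,dx$ and absorbing $c_{1,s}$ (and the factor $2s$ from $\frac1{2s}[h-x]^{-2s}$) into $\mu$. With that convention, $\frac{d}{dt}\big(e^{-L_0 t}\int u\big)=e^{-L_0t}\int(u_t-L_0u)$, and every term cancels exactly.

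\emph{Main obstacle.} The main difficulty is rigor: $u$ is only $C^{1+\frac{\alpha}{2s},2s+\alpha}_{\rm loc}$ inside $\Omega_{g,h}$, so neither differentiation under the integral nor the Fubini/antisymmetry step is immediate near $\partial\cO_t$. I would first prove the integrated-in-time identity $\widehat\Psi(t_2)=\widehat\Psi(t_1)$ on the truncated intervals $(g(t)+\delta,h(t)-\delta)$, using the weak formulation \eqref{def w1} with test functions $\varphi_\delta(t,\cdot)\in C^{2s}_c(\cO_t)$ that equal $1$ on those intervals. I would then let $\delta\to0$. The boundary estimate \eqref{ab-2}, $u\le C_0\rho^s/\sqrt t$, makes $u[h-x]^{-2s}\le C\rho^{-s}$ integrable and controls the error terms $\int u(-\Delta)^s\varphi_\delta$ near the boundary, exactly as in the $\delta^{1-s}$ estimates in the proof of Theorem \ref{thm-existence}. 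Finally, continuity of $\widehat\Psi$ at $t=0^+$ follows from $u(t,\cdot)\to u_0$ uniformly.
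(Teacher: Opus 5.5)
Your proposal is correct and takes the paper's route: monotonicity follows from $F_1\le 0$ being nonincreasing in its third argument, and conservation follows by differentiating along the solution, using antisymmetry on $\cO_t\times\cO_t$ and the free boundary conditions to convert $d\int_{g}^{h}(-\Delta)^s\tilde u\,dx$ into $\frac{d}{\mu}e^{-L_0t}(h'-g')$. Your normalisation fix is what the paper's proof does implicitly: it works with $\Psi(t)=\int_{g(t)}^{h(t)}e^{-L_0t}u\,dx$ and silently omits $c_{1,s}$, so your reading of the definition is right, and your truncation argument (cut-off test functions plus the bound $u\le C_0\rho^s/\sqrt{t}$) supplies rigor that the paper's formal differentiation leaves out.
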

 \begin{proof}
 The monotonicity of $\widehat \Psi_{(u,g,h)}(\cdot)$  in $u, h$ and $-g$ follows directly from its definition and the monotonicity of the nonpositive function $F_1$ in its third argument.

 It remains to prove the second assertion.
 Let $( u, g,h)$ be any solution of \eqref{eq 1.1}. Then
  \[
  \tilde u(t,x):=e^{-L_0t}u(t,x)
  \]
  satisfies
  \begin{equation}\label{eq-F1}
\left\{
\begin{array}{lll}
\partial_t  \tilde u  +d(-\Delta)^s  \tilde u= F_1(t,x,\tilde u)  \quad &{\rm for}\ \,   t\in(0,T],\ x\in\big(g(t),h(t)\big),\\[3mm]
 \tilde u(t,x)=0  \quad  &{\rm for}\ \, t\in(0,T],\ \, x\in \R \setminus\big(g(t),h(t)\big),\\[2mm]
\displaystyle  h'(t)=\mu e^{L_0 t} \int_{g(t)}^{h(t)}\tilde u(t,x)[h(t)-x]^{-2s} dx  \ \ &{\rm for}\ \,t\in(0,T],\\[4mm]
\displaystyle g'(t)=-\mu e^{L_0 t}\int_{g(t)}^{h(t)}\tilde u(t,x)[x-g(t)]^{-2s} dx   \ \  &{\rm for}\ \,t\in(0,T],\\[4mm]
 h(0)=-g(0)=h_0,\ \tilde u(0,x)=u_0(x)  &{\rm for}\ \,  x\in\cO_0.
\end{array}\right.
 \end{equation}

Define
$$\Psi(t):=\int_{g(t)}^{h(t)} \tilde u(t,x) dx=\int_{g(t)}^{h(t)} e^{-L_0t} u(t,x) dx.  $$
Then
  \begin{align*}
\Psi'(t)&=h'(t)\tilde u(t,h(t))-g'(t) \tilde u(t,g(t))+\int_{g(t)}^{h(t)}\partial_t \tilde u(t,x) dx
\\& =-d  \int_{g(t)}^{h(t)} (-\Delta)^s  \tilde u(t,x) dx +  \int_{g(t)}^{h(t)} F_1(t,x,\tilde u) dx.
\end{align*}
A direct computation shows that, with $\cO_t:=(g(t), h(t))$,
 \begin{align*}
  \int_{g(t)}^{h(t)} (-\Delta)^s   u(t,x) dx &= \lim_{\epsilon\to 0^+}\int_{\cO_t} \int_{\R\setminus B_\epsilon(x)} \frac{  u(t,x)-  u(t,y)}{|x-y|^{1+2s}} dy dx
 \\&=\lim_{\epsilon\to 0^+}\int_{\cO_t} \int_{\cO_t\setminus B_\epsilon(x)}  \frac{  u(t,x)-  u(t,y)}{|x-y|^{1+2s}} dy dx\\
 &\ \ \ \ +\int_{g(t)}^{h(t)} \int_{h(t)}^{+\infty} \frac{   u(t,x)-  u(t,y)}{|x-y|^{1+2s}} dy dx
 +\int_{g(t)}^{h(t)} \int_{-\infty}^{g(t)} \frac{  u(t,x)-  u(t,y)}{|x-y|^{1+2s}} dy dx
\\&=\int_{g(t)}^{h(t)}\int_{h(t)}^{+\infty} \frac{  u(t,x) }{|x-y|^{1+2s}} dy dx +\int_{g(t)}^{h(t)} \int_{-\infty}^{g(t)} \frac{   u(t,x) }{|x-y|^{1+2s}} dy dx
\\&= \frac{1}{\mu}\big(h'(t)-g'(t)\big),
\end{align*}
 since the anti-symmetry  in $(x,y)$ of the integrand  function below  gives
 \[
\int_{\cO_t} \int_{\cO_t\setminus B_\epsilon(x)}  \frac{  \ u(t,x)-  u(t,y)}{|x-y|^{1+2s}} dy dx
=\int\!\!\!\int_{(\cO_t\times \cO_t)\setminus\Delta_\epsilon(t)}   \frac{ u(t, x)-  u(t, y)}{|x-y|^{1+2s}} dxdy=0,
 \]
 where $\Delta_\epsilon(t):=\{(x,y)\in \cO_t\times \cO_t: |x-y|<\epsilon\}$.

 Thus
 \[
 d  \int_{g(t)}^{h(t)} (-\Delta)^s  \tilde u(t,x) dx=\frac{d e^{-L_0t}}{\mu}\big(h'(t)-g'(t)\big)
 \]
 and
 \begin{align*}
\Psi'(t) + \frac{d e^{-L_0t}}{\mu}\big(h'(t)-g'(t)\big)- \int_{g(t)}^{h(t)} F_1(t,x, e^{-L_0t} u) dx=0\quad {\rm for}\ \, t\in (0,T].
\end{align*}
It follows that $\widehat \Psi'(t)=0$ for any $t\in (0, T]$,
where
{\small
\begin{align*}\widehat \Psi (t):&= \Psi (t) \! +\! \frac{d e^{-L_0t}}{\mu} \big(h(t)\!-\!g(t)\big)+\frac{dL_0}{\mu}\int_0^{t}\big(h(\tau)-g(\tau)\big) e^{-L_0\tau} d\tau
 -\int_0^{t}\int_{g(\tau)}^{h(\tau)} F_1(\tau,x,e^{-L_0\tau} u) dx d\tau\\
 &=\widehat \Psi_{(u,g,h)}(t).
 \end{align*}
 }
 Hence
 \begin{align}\label{identity-0}
\widehat \Psi_{(u,g,h)}(t) = \widehat \Psi_{(u,g,h)}(0) =\int_{-h_0}^{h_0} u_0(x)dx+\frac d\mu (2h_0) \mbox{ for } t\in (0, T].
 \end{align}
 The proof is complete.
 \end{proof}

 \begin{theorem}[Uniqueness]\label{thm-uniq}
 Problem \eqref{eq 1.1} has a unique solution.
 \end{theorem}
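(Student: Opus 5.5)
The plan is to compare an arbitrary solution with a family of perturbed solutions, using Lemma \ref{c-p} (with Remark \ref{rm-cp}), and then use the conserved entropy of Lemma \ref{lem-ent} to close the gap. Let $(u_1,g_1,h_1)$ and $(u_2,g_2,h_2)$ be two solutions of \eqref{eq 1.1} on $[0,T]$ with the same data $(u_0,h_0)$.

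\textbf{Step 1: squeeze each solution between strictly ordered approximating solutions.} For $\epsilon>0$ small, choose data $(u_0^{\epsilon,+},h_0+\epsilon)$ with $u_0^{\epsilon,+}\in\mathcal I_0((-h_0-\epsilon,h_0+\epsilon))$, $u_0^{\epsilon,+}\ge u_0$ and $u_0^{\epsilon,+}\to u_0$ uniformly. Choose data $(u_0^{\epsilon,-},h_0-\epsilon)$ with $u_0^{\epsilon,-}\le u_0$ supported in $[-h_0+\epsilon,h_0-\epsilon]$; one can take $u_0^{\epsilon,-}=(u_0-\epsilon)^+$ slightly cut off, so that $u_0^{\epsilon,-}\to u_0$ uniformly. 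By Theorem \ref{thm-ex}, each set of data has a solution; fix one, call them $(u^{\epsilon,\pm},g^{\epsilon,\pm},h^{\epsilon,\pm})$. Since the initial intervals are strictly nested, Lemma \ref{c-p} applies for $i=1,2$ and gives
\[
[g^{\epsilon,-},h^{\epsilon,-}]\subset(g_i,h_i),\quad [g_i,h_i]\subset(g^{\epsilon,+},h^{\epsilon,+}),\quad u^{\epsilon,-}\le u_i\le u^{\epsilon,+}.
\]
This uses Remark \ref{rm-cp} with general initial intervals $\overline\cO_1\subset\cO_2$; a trimming of $u_0$ near $\pm h_0$ keeps the supports compatible.

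\textbf{Step 2: the gap between the outer and inner solutions vanishes.} Apply Lemma \ref{lem-ent} to $(u^{\epsilon,+},g^{\epsilon,+},h^{\epsilon,+})$ and to $(u^{\epsilon,-},g^{\epsilon,-},h^{\epsilon,-})$. By \eqref{identity-0},
\[
\widehat\Psi^{\epsilon,+}(t)-\widehat\Psi^{\epsilon,-}(t)=\int(u_0^{\epsilon,+}-u_0^{\epsilon,-})\,dx+\frac{4d\epsilon}{\mu}\to0 .
\]
Now look at the difference $\widehat\Psi^{\epsilon,+}-\widehat\Psi^{\epsilon,-}$ term by term. Because of the ordering from Step 1 and the monotonicity in Lemma \ref{lem-ent}, every piece of this difference is nonnegative:
\begin{itemize}
\item the mass difference;
\item $\frac{d}{\mu}e^{-L_0t}$ times the difference of the lengths $h-g$;
\item the time integral of the length difference;
\item the $F_1$ term, which is nonnegative because $F_1$ is decreasing in $\xi$ and nonpositive, the domains are nested, and $u^{\epsilon,+}\le K_1$ by \eqref{bound-1}.
\end{itemize}
Hence each piece tends to $0$. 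In particular,
\[
(h^{\epsilon,+}-g^{\epsilon,+})-(h^{\epsilon,-}-g^{\epsilon,-})\to0\quad\text{uniformly in }t,
\]
and by the nesting this gives $h^{\epsilon,+}-h^{\epsilon,-}\to0$ and $g^{\epsilon,+}-g^{\epsilon,-}\to0$. Likewise
\[
\int_{\R}(u^{\epsilon,+}-u^{\epsilon,-})(t,x)\,dx\to0 .
\]

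\textbf{Step 3: conclude.} Both $h_1$ and $h_2$ lie between $h^{\epsilon,-}$ and $h^{\epsilon,+}$, so $|h_1-h_2|\le h^{\epsilon,+}-h^{\epsilon,-}\to0$, and the same holds for $g$. Hence $(g_1,h_1)=(g_2,h_2)$. Then $u_1=u_2$ follows from uniqueness for the semilinear problem with fixed boundaries (Theorem \ref{thm-semi}, Step 3); the $L^1$ convergence above also gives it directly.

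\textbf{Main obstacle.} The delicate point is Step 1. One must build admissible ordered data in $\mathcal I_0$ with strictly nested supports that converge to $u_0$. One must also check that Lemma \ref{c-p} covers the comparison $u^{\epsilon,-}\le u_i$, where the smaller initial interval $(-h_0+\epsilon,h_0-\epsilon)$ sits strictly inside $(-h_0,h_0)$ while $u_0^{\epsilon,-}\le u_0$ there. This should follow from Remark \ref{rm-cp}.

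A secondary check is needed in Step 2: the monotonicity in Lemma \ref{lem-ent} is stated for a single triple, and I need it comparatively across nested domains. Here one uses that the integrand of the $F_1$ term is $\le0$ and vanishes where $u=0$, so enlarging the domain or increasing $u$ can only make $-\int\!\!\int F_1$ larger.
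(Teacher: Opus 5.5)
Your argument is correct, but it takes a different route from the paper.

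\textbf{The paper's route.} The paper argues by contradiction with a one-sided comparison. It forms the envelope $(u_*,g_*,h_*)=(\max\{u_1,u_2\},\min\{g_1,g_2\},\max\{h_1,h_2\})$, which is not itself a solution. It uses strict monotonicity of $\widehat\Psi$ to get $\widehat\Psi_1(T)<\widehat\Psi_*(T)$, for an index with $(g_1,h_1)\not\equiv(g_*,h_*)$. It then uses only outer approximations $(u^*_n,g^*_n,h^*_n)$, with data $h_0^n\downarrow h_0$ and $u_0^n\ge u_0$; these dominate both solutions and hence the envelope. Finally it invokes conservation of $\widehat\Psi$ both for these approximations and for $(u_1,g_1,h_1)$ itself, giving $\widehat\Psi_*(T)\le\lim_n\widehat\Psi_{n,*}(0)=\widehat\Psi_1(T)$.

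\textbf{Your route.} You trap each $(u_i,g_i,h_i)$ between an inner and an outer solution. You use conservation only for the approximating solutions, together with non-strict termwise monotonicity. So you need neither the envelope nor a strictness argument.

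\textbf{What it costs.} You need inner data with strictly smaller support. This is easy, e.g.\ $\chi_\epsilon u_0$ with a cutoff $\chi_\epsilon$, using $u_0(\pm h_0)=0$ and uniform continuity. The inner comparison is then covered directly by Lemma~\ref{c-p} with symmetric intervals, so the step you flagged as the main obstacle is not one, and Remark~\ref{rm-cp} is not needed. You may also simply take $u_0^{\epsilon,+}=u_0$.

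\textbf{What it buys.} The squeeze gives a quantitative, monotone continuous dependence on the data. The quantities $h^{\epsilon,+}-h^{\epsilon,-}$, $g^{\epsilon,-}-g^{\epsilon,+}$ and $\|u^{\epsilon,+}(t,\cdot)-u^{\epsilon,-}(t,\cdot)\|_{L^1}$ are all $O\big(\|u_0^{\epsilon,+}-u_0^{\epsilon,-}\|_{L^1}+\epsilon\big)$ uniformly in $t\in[0,T]$. For example, this yields the convergence of the $h_0+1/n$ approximations used in Step~3 of the proof of Theorem~\ref{thm-criteria} without the compactness argument used there.

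\textbf{One point to make explicit.} All the triples you compare must use the same $L_0$ in $\widehat\Psi$. Choose it as a Lipschitz constant of $f$ on an interval containing the ranges of all solutions involved; such a bound is available from \eqref{bound-1}, uniformly in $\epsilon$. The conservation identity of Lemma~\ref{lem-ent} holds for any $L_0$. The sign and monotonicity of $F_1$ only require $L_0$ to dominate that Lipschitz constant.
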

 \begin{proof}
 We will prove the desired uniqueness result by a contradiction argument, where the entropy function $\widehat \Psi_{(u,g,h)}$ in Lemma \ref{lem-ent} will play a crucial role.

So suppose that $(u_1, g_1, h_1)$ and $(u_2, g_2, h_2)$ are two different solutions of \eqref{eq 1.1} defined for $t\in (0, T]$. By the uniqueness conclusion for \eqref{eq-u}, necessarily $(g_1, h_1)\not\equiv (g_2, h_2)$, and therefore, there exists $i\in \{1,2\}$ such that
\[
(u_i, g_i, h_i)\not\equiv (u_*, g_*, h_*), \ \mbox{ where } \begin{cases}g_*(t):=\min\{g_1(t), g_2(t)\},\ \\
h_*(t):=\max\{h_1(t), h_2(t)\},\\
 u_*(t,x):=\max\{u_1(t,x), u_2(t,x)\}.\end{cases}
\]
For definiteness, we assume $i=1$.
We now let $\widehat \Psi_i(t)$ denote $\widehat\Psi_{(u,g,h)}(t)$ with $(u,g,h)=(u_i, g_i, h_i)$, $i=1,2$, and similarly define $\widehat \Psi_*(t)$. By \eqref{bound-1}, we have $0\leq u_i\leq u_*\leq K_1$ for $i=1,2$.
Then from the  monotonicity property of the entropy function, and our assumption above that $(u_1, g_1, h_1)\not\equiv (u_*, g_*, h_*)$, we obtain
\begin{equation}\label{1<*}
\widehat \Psi_1(T)<\widehat \Psi_*(T).
\end{equation}

Next we choose a sequence $h_0^n>h_0$ decreasing to $h_0$ as $n\to\infty$, and a sequence $u_0^n\in\mathcal I_0((-h_0^n, h_0^n))$ such that
\[
u_0^n\geq u_0,\ \lim_{n\to+\infty} u_0^n= u_0 \mbox{ in } L^\infty(\R) .
\]
By Theorem \ref{thm-ex}, problem \eqref{eq 1.1} with initial data $(u_0^n, h_0^n)$ has a solution $( u^*_n, g^*_n, h^*_n)$  for each $n\geq 1$. By Lemma \ref{c-p} and \eqref{bound-1} we have
\[
[g_i(t), h_i(t)]\subset (g^*_n(t), h^*_n(t)),\ \  u_i(t,x)\leq u^*_n(t,x)\leq K_1 \mbox{ for } t\in (0, T],\ x\in \R, \ i=1,2,\ n\geq 1.
\]
It follows that
\[
[g_*(t), h_*(t)]\subset (g^*_n(t), h^*_n(t)),\ \  u_*(t,x)\leq u^*_n(t,x) \leq K_1 \mbox{ for } t\in (0, T],\ x\in \R.
\]
Hence
\[
\widehat \Psi_*(T)\leq \widehat \Psi_{n,*}(T) \mbox{ for  all } n\geq 1,
\]
where
$\widehat\Psi_{n,*}$ denotes $\widehat \Psi_{(u,g,h)}$ with $(u,g,h)=(u^*_n, g^*_n, h^*_n)$.
Since
\[\begin{aligned}
\widehat \Psi_{n,*}(T)=\widehat \Psi_{n,*}(0)&=\int_{-h^n_0}^{h^n_0} u^n_0(x)dx+\frac d\mu (2h^n_0)\\
& \to \int_{-h_0}^{h_0} u_0(x)dx+\frac d\mu (2h_0)=\widehat\Psi_1(0)=\widehat\Psi_1(T)
\mbox{ as } n\to\infty,
\end{aligned}
\]
it follows that $\widehat \Psi_*(T)\leq \widehat\Psi_1(T)$, which is a contradiction to \eqref{1<*}, and the proof is complete.
\end{proof}

   \setcounter{equation}{0}
\section{Spreading-vanishing dichotomy}

We examine the long-time dynamics of \eqref{eq 1.1} in this section.
\subsection{Fixed domain problem}

We start this section with the long time behavior of fractional parabolic equations in a cylinder domain
 \begin{equation}\label{eq 4.1}
\left\{ \arraycolsep=1pt
\begin{array}{cll}
\displaystyle \partial_t u+ d (-\Delta)^s  u=f(u)\quad \  &{\rm in}\ \   (0,+\infty)\times \bI,\\[2mm]
\displaystyle   u  = 0  \ &{{\rm in}}\  \    (0,+\infty)\times \big(\R \setminus \bI\big), \\[2mm]

  u(0,\cdot) =u_0   \ &{{\rm in}}\  \ \bI,
\end{array}
\right.
\end{equation}
where $\bI$ is a finite open interval in $\R$.  By Theorem \ref{thm-semi},  problem \eqref{eq 4.1} admits a unique positive solution $u_{\bI}$.

A function $v$ is called a steady state of \eqref{eq 4.1} if  $v\in C(\R)$ is a classical solution of
\begin{equation}\label{eeq 4.1}
	\left\{ \arraycolsep=1pt
	\begin{array}{cll}
		\displaystyle   d (-\Delta)^s v = f(v ) \quad \  &{\rm in}\ \    \bI,
		\\[2mm]
		\displaystyle  v=0  \ &{{\rm in}}\  \    \R \setminus \bI.
	\end{array}
	\right.
\end{equation}

Let $\lambda_{s,1}(\bI)$ be the principal eigenvalue of $(-\Delta)^s$ over $\bI$ with Dirichlet boundary conditions,
and $\phi_{s,1}=\phi_{s,1,\bI}$ an associated principal eigenfunction that is positive in $\bI$.

\begin{lemma}\label{lm 4.1-0}
Assume that  $f$ verifies ${\bf (f1)-(f4)}$ and $\bI$ is a bounded interval. Then the
problem \eqref{eeq 4.1} has only the trivial nonnegative solution $v\equiv 0$ if $d \lambda_{s,1}(\bI )\geq  f'(0)$,
and \eqref{eeq 4.1} admits a unique positive  solution if $d \lambda_{s,1}(\bI )<  f'(0)$.
\end{lemma}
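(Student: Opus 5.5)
Three tasks: nonexistence when $d\lambda_{s,1}(\bI)\ge f'(0)$, existence when $d\lambda_{s,1}(\bI)<f'(0)$, and uniqueness in that case. Throughout I use the principal eigenpair $(\lambda_{s,1},\phi_{s,1})$ with $\phi_{s,1}>0$ in $\bI$. I also use the symmetry of $(-\Delta)^s$ on functions vanishing outside $\bI$: $\int_{\bI}\phi(-\Delta)^s v=\int_{\bI}v(-\Delta)^s\phi$.

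\emph{Nonexistence.} Let $v\gneqq0$ solve \eqref{eeq 4.1}. By the strong maximum principle argument of Lemma \ref{lm comp-1} (stationary version), $v>0$ in $\bI$. Testing the equation with $\phi_{s,1}$ and using symmetry gives
\[
d\lambda_{s,1}\int_{\bI}v\phi_{s,1}\,dx=\int_{\bI}\frac{f(v)}{v}\,v\phi_{s,1}\,dx<f'(0)\int_{\bI}v\phi_{s,1}\,dx,
\]
where the strict inequality comes from {\bf (f4)}. This contradicts $d\lambda_{s,1}\ge f'(0)$. To justify the symmetry I will use the boundary behaviour $v\le C d_{\bI}^s$, which follows from Lemma \ref{lem-I}, together with the known $d^s$ behaviour of $\phi_{s,1}$ (consistent with \eqref{hk-est2}). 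These make the integrals converge, and $v$ is regular enough in the interior by Lemma \ref{lemmajfa}.

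\emph{Existence.} I will use the parabolic problem \eqref{eq 4.1} with initial datum $\epsilon\phi_{s,1}$.
\begin{itemize}
\item Since $f(u)=f'(0)u+o(u)$ and $d\lambda_{s,1}<f'(0)$, for small $\epsilon$ we have $f(\epsilon\phi_{s,1})\ge d\lambda_{s,1}\epsilon\phi_{s,1}$. So $\epsilon\phi_{s,1}$ is a stationary subsolution.
\item The constant $M=\max\{K_0,\epsilon\|\phi_{s,1}\|_\infty\}$ is a supersolution, by {\bf (f2)}.
\item By Lemma \ref{lm comp-1}, the solution starting from $\epsilon\phi_{s,1}$ is nondecreasing in $t$: compare $u(t+\tau,\cdot)$ with $u(t,\cdot)$, using the subsolution property at $t=0$. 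It is also bounded by $M$.
\item Hence it converges pointwise to some $V$ with $\epsilon\phi_{s,1}\le V\le M$.
\item The interior estimates of Lemma \ref{lemmajfa}, applied on time windows $[n,n+1]$, give local $C^{1+\alpha/2s,2s+\alpha}$ convergence. The boundary estimate of Lemma \ref{lem-I} gives $u\le C d_{\bI}^s$ uniformly for $t\ge1$, so $V$ is continuous on $\R$ and vanishes outside $\bI$.
\item Passing to the limit, $V$ is a positive steady state.
\end{itemize}

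\emph{Uniqueness.} Let $v_1,v_2$ be positive solutions. I expect this to be the main obstacle, because integration by parts with the nonlocal operator needs care near $\partial\bI$. Two routes are available.
\begin{itemize}
\item \emph{Brezis--Oswald type identity.} Test the equation for $v_1$ with $(v_1^2-v_2^2)/v_1$ and the equation for $v_2$ with $(v_1^2-v_2^2)/v_2$, then subtract. The nonlocal Picone inequality makes the diffusion part nonnegative, while {\bf (f4)} makes the reaction part $\int(f(v_1)/v_1-f(v_2)/v_2)(v_1^2-v_2^2)\le0$. This forces $v_1=v_2$ when $f(u)/u$ is strictly decreasing.
\item \emph{Sweeping argument (what I try first), since {\bf (f4)} only gives non-increasing.} Both $v_i$ are comparable to $d_{\bI}^s$: the upper bound comes from Lemma \ref{lem-I}, and the lower bound $v_i\ge c\,\phi_{s,1}$ from a comparison with small multiples of $\phi_{s,1}$. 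So $k^*:=\inf\{k\ge1:kv_1\ge v_2\}$ is finite. Suppose $k^*>1$.
  \begin{itemize}
  \item By {\bf (f4)}, $f(k^*v_1)\le k^*f(v_1)$, so $k^*v_1$ is a supersolution.
  \item Then $w=k^*v_1-v_2\ge0$ satisfies $d(-\Delta)^sw+c(x)w\ge0$ with bounded $c$.
  \item If $w\equiv0$, then $k^*f(v_1)=f(k^*v_1)$. The condition $f(u)/u<f'(0)$ combined with monotonicity only gives $f(u)/u$ constant on $[v_1,k^*v_1]$ pointwise. To get a contradiction I would use that $v_1\to0$ at $\partial\bI$, so $f(u)/u$ would be constant on $(0,\sup k^*v_1]$, hence equal to $f'(0)$, contradicting {\bf (f4)}.
  \item Otherwise $w>0$ by the strong maximum principle. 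A Hopf-type bound $w\ge c\,d_{\bI}^s$, obtained by comparison with $\phi_{s,1}$ on a subinterval, lets us decrease $k^*$ slightly, contradicting its minimality.
  \end{itemize}
  So $k^*\le1$, i.e. $v_2\le v_1$, and by symmetry $v_1=v_2$.
\end{itemize}
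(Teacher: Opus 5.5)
Your proof is correct. The nonexistence part is the same as the paper's: test with $\phi_{s,1}$, use the symmetry of $(-\Delta)^s$, and use the strict inequality $f(u)<f'(0)u$. Your existence and uniqueness arguments take different routes.

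\emph{Existence.} The paper runs an elliptic monotone iteration $d(-\Delta)^s v_i+Kv_i=f(v_{i-1})+Kv_{i-1}$ starting from $\epsilon\phi_{s,1}$, bounded by $u_*$, and passes to the limit with an external stability theorem. You run the parabolic flow from $\epsilon\phi_{s,1}$ instead. This needs only Theorem~\ref{thm-semi}, Lemma~\ref{lm comp-1} and the estimates of Lemmas~\ref{lemmajfa} and~\ref{lem-I}, all already in the paper; it is essentially the argument the paper itself uses later in Proposition~\ref{pr cyl-1}(ii). The small extra cost is checking that the time-monotone limit is stationary, e.g.\ by passing to the limit in the translates $u(\cdot+n,\cdot)$.

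\emph{Uniqueness.} The paper builds a minimal solution $v_*\le\min\{v_1,v_2\}$ by iterating from $\epsilon_1\phi_{s,1}$. It then uses $\int_{\bI}v_*v_1\,[f(v_1)/v_1-f(v_*)/v_*]\,dx=0$, which rests on integrating $(-\Delta)^s$ by parts for the pair $v_*,v_1$. Your sweeping argument with $k^*v_1$ avoids both the integration by parts and the auxiliary minimal solution, using only the strong maximum principle and a Hopf bound. Your treatment of the case $w\equiv0$ is also worth noting. There $f(u)/u$ must be constant on $(0,k^*\max v_1]$ because $v_1\to0$ at $\partial\bI$, which contradicts $f(u)/u<f'(0)$. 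This makes explicit the step the paper's last sentence passes over when (f4) gives only monotonicity, not strict monotonicity.

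One phrasing to tighten: derive the lower bound $v_i\ge c\,\phi_{s,1}$ from a Hopf lemma for the linear inequality $d(-\Delta)^s v_i+Lv_i\ge0$. A suitable barrier is $\epsilon(\phi_{s,1}+C\chi_J)$ on $\bI\setminus\overline J$ with $J\Subset\bI$. Do not get it by comparing the nonlinear subsolution $\epsilon\phi_{s,1}$ with $v_i$ on all of $\bI$: when $d\lambda_{s,1}(\bI)<f'(0)$, no maximum principle for that comparison is available a priori. This Hopf-type input is the same one the paper invokes.
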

\begin{proof} Let $v$ be a nonnegative solution of \eqref{eeq 4.1}. If  $d \lambda_{s,1}(\bI )\geq  f'(0)$, then
 \begin{align*}
d\lambda_{s,1} \int_{\bI}\phi_{s,1}   vdx=d \int_{\bI}   (-\Delta)^s \phi_{s,1} vdx &=d \int_{\bI}    \phi_{s,1}(-\Delta)^s  vdx
  \\[1mm]&= \int_{\bI}  \phi_{s,1}f(v)dx
\leq f'(0)   \int_{\bI}  \phi_{s,1} vdx,
  \end{align*}
with the last inequality strict unless $v\equiv0$. Thus $v\gneqq 0$ implies
$$\int_{\bI}  \phi_{s,1} vdx>0\quad{\rm and}\quad
0\leq \big(d\lambda_{s,1}-f'(0)\big)  \int_{\bI}  \phi_{s,1} vdx<0, $$
which is impossible. Hence \eqref{eeq 4.1} has no nontrivial nonnegative solution in this case.

If  $d \lambda_{s,1}(\bI )<  f'(0)$, then there exists $\delta_0\in(0,u_*)$ such that
$$f(\tau)\geq d \lambda_{s,1}(\bI )  \tau  \quad {\rm for}\ \, \tau\in[0,\delta_0].  $$
Thanks to the boundedness of $\phi_{s,1}$,  there exists $\epsilon_0>0$ such that $\epsilon \phi_{s,1}\leq  \delta_0$ for $\epsilon\in(0,\epsilon_0]$.
Let $v_1=\epsilon \phi_{s,1}$; then
$$d(-\Delta)^s v_1=d   \lambda_{s,1}(\bI ) v_1  \leq        f(v_1).$$
Let $v_i$ be the unique classical solution of
\begin{equation*}
\begin{cases}
\displaystyle   d (-\Delta)^s v +Kv= f(v_{i-1} )+Kv_{i-1} \quad \  &{\rm in}\ \    \bI,\\[2mm]
\phantom{ (-\Delta)^s\    }
\displaystyle  v=0 \quad \ &{{\rm in}}\  \    \R \setminus \bI,
\end{cases}
\end{equation*}
where $K>0$ is some large constant such that $f(\tau)+K\tau$ is increasing for $\tau\in [0,u_*]$, and consider the  sequence $\{v_i\}_{i=1}^{\infty}$. The comparison principle of the fractional Laplacian implies that $v_i$ is  increasing in terms of $i$ and bounded from above by $u_*>0$. The stability results \cite[Theorem 2.4]{CFQ} imply that the limit of $v_i$ as $i\to+\infty$ is a positive solution of \eqref{eeq 4.1}.

 It remains to  show  the uniqueness of positive solutions to \eqref{eeq 4.1}.  Let $v_1$ and $v_2$ be two classical positive solutions of \eqref{eeq 4.1}.
 By boundary regularity (see \cite{RS}) and the Hopf boundary lemma (see \cite{GS}) for $(-\Delta)^s$, there is a small constant $\epsilon_1>0$ such that $\tilde v_1:=\epsilon_1 \phi_{s,1}\leq \min\{v_1,v_2\}$. Repeating the above iteration arguments with $u_*$ replaced by $\tilde u_*:=\max\{\|v_1\|_\infty, \|v_2\|_\infty\}$, we obtain an increasing sequence $\{\tilde v_i\}_{i=1}^{\infty}$, and $\displaystyle v_*=\lim_{i\to \infty} \tilde v_i$ is a classical solution of \eqref{eeq 4.1} satisfying $v_*\leq \min\{v_1,v_2\}$.
 Indeed, by enlarging $K$ suitably we may assume that $f(\tau)+K\tau$ is increasing for $\tau\in [0, \tilde u_*]$.
Clearly $\tilde v_1\leq \min\{v_1,v_2\}$. If $\tilde v_{i-1}\leq \min\{v_1,v_2\}$,  then
\begin{align*}
d(-\Delta)^s (\tilde v_{i}-v_1)+K(\tilde v_{i}-v_1)\leq 0,\ \ d(-\Delta)^s (\tilde v_{i}-v_2)+K(\tilde v_{i}-v_2)\leq 0,
\end{align*}
which imply $\tilde v_i\leq \min\{v_1,v_2\}$ by the maximum principle. Hence, $\tilde v_{i}\leq \min\{v_1,v_2\}$, and we have proved that this inequality holds for all $i\geq 1$. It follows that $v_*\leq \min\{v_1,v_2\}$.

Now using $\displaystyle \int_{\R} v_*(-\Delta)^{s/2} v_1 {\rm d}x=\int_{\R} v_1(-\Delta)^{s/2} v_* {\rm d}x$ and \eqref{eeq 4.1} we obatin
 \begin{align*}
 0=\int_{\bI}[v_*f(v_1)-v_1f(v_*)] {\rm d}x=\int_{\bI}v_*v_1[f(v_1)/v_1-f(v_*)/v_*] {\rm d}x.
 \end{align*}
 Since $v_*\leq v_1$, by the  maximum principle we have 
 \[
 \mbox{either $v_*\equiv v_1$ or $v_*<v_1$ in $\bI$.}
 \]
   By \textbf{(f4)}, we have $f(v_1)/v_1-f(v_*)/v_*\leq 0$. Hence,
 \begin{align*}
 0= \int_{\bI}v_*v_1[f(v_1)/v_1-f(v_*)/v_*] {\rm d}x\leq 0,
 \end{align*}
which implies $f(v_1)/v_1-f(v_*)/v_*\equiv 0$ in $\bI$. Due to ${\bf (f4)}$, this is possible only if the alternative $v_1\equiv v_*$ above holds.  Similarly,  one could show that $v_2\equiv v_*$. Hence $v_1\equiv v_2$.
\end{proof}

\begin{proposition} \label{pr cyl-1}
  Suppose that  $f$ verifies ${\bf (f1)-(f4)}$ and $u_0  \in  \mathcal I_0 (\bI)$. Then the unique positive solution of  \eqref{eq 4.1}, denoted by $u_{\bI}(t,x)$, has the following properties:

 \begin{itemize}
 	\item[{\rm (i)}] If $d \lambda_{s,1}(\bI )\geq  f'(0)$,   then $u_{\bI}(t,\cdot)\to 0$ in $C(\bar\bI)$ as $t\rightarrow +\infty$;
 	\item[{\rm (ii)}] If $d\lambda_{s,1}(\bI )< f'(0)$, then, as $t\rightarrow +\infty$, $u_{\bI}(t,\cdot)$ converges  to the unique positive   steady state $\hat u_{\bI}$ in $C^{{2s+\gamma}}_{\loc}(\bI)\cap C(\bar\bI)$ for some $\gamma\in(0,s)$.	

 	\item[{\rm (iii)}]  Let $\bI=(\ell_1,\ell_2)$, then
 	$$\lim_{\ell_1\to-\infty,\, \ell_2\to\infty}  \hat u_{\bI} =u_*\quad {\rm locally\ uniformly\ in\ }\, \R, $$
 	where $u_*>0$ is the positive zero point of $f$, i.e. $u_*>0,\ f(u_*)=0$.
 \end{itemize}
\end{proposition}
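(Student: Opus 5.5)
The plan is a sub/super-solution squeeze combined with monotone convergence, using Lemma \ref{lm comp-1}, the bound \eqref{bound-1}, the boundary estimate \eqref{ab-2}, and Lemma \ref{lm 4.1-0}.

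\emph{Upper solution.} Let $\bar U(t)$ solve the ODE $\bar U'=f(\bar U)$ with $\bar U(0)=\max\{\|u_0\|_\infty,u_*\}$. This function is a supersolution of \eqref{eq 4.1}, because $(-\Delta)^s$ of a constant, restricted to $\bI$ and set to zero outside, is nonnegative. Hence $u_{\bI}\le\bar U\to u_*$.

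\emph{Case (i).} First I show $\limsup u_{\bI}\le$ a solution that decreases in $t$. Let $\bar u$ be the solution of \eqref{eq 4.1} with initial datum a large constant $M\ge\max\{\|u_0\|_\infty,K_0\}$ on $\bI$ (zero outside). Then $\bar u(t+\tau,\cdot)\le \bar u(t,\cdot)$ for $\tau>0$, by comparison at $t=0$ and Lemma \ref{lm comp-1}. So $\bar u$ decreases in $t$ to a limit $V\ge 0$. Next I need compactness that is uniform in $t\ge1$:
\begin{itemize}
\item the interior estimates of Lemma \ref{lemmajfa} hold on unit time windows;
\item the boundary bound $\bar u(t,x)\le C_0 d_{\bI}(x)^s$ holds uniformly for $t\ge1$. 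I get it by applying \eqref{ab-2} on $[t-1,t]$ with initial datum bounded by $M$; the constant is independent of the starting time since $f$ is autonomous.
\end{itemize}
These give convergence in $C^{2s+\gamma}_{\loc}(\bI)\cap C(\bar\bI)$. Passing to the limit in the equation integrated over $[t,t+1]$ shows that $V$ is a nonnegative steady state. By Lemma \ref{lm 4.1-0}, $V\equiv0$ when $d\lambda_{s,1}\ge f'(0)$. Since $0\le u_{\bI}\le\bar u$, we get $u_{\bI}\to0$ uniformly.

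\emph{Case (ii).} The same upper argument gives $\limsup u_{\bI}\le V$, where $V$ is now a positive or trivial steady state. For the lower bound:
\begin{itemize}
\item By Lemma \ref{lm comp-1}, $u_{\bI}(1,\cdot)>0$ in $\bI$. Using \cite{RS} and \cite{GS} style boundary behaviour, or directly the heat kernel lower bound \eqref{hk-est1}, I get $u_{\bI}(1,x)\ge c\,d_{\bI}(x)^s\ge\epsilon\phi_{s,1}$. Here I use $\phi_{s,1}\le C d_{\bI}^s$.
\item As in the proof of Lemma \ref{lm 4.1-0}, $\epsilon\phi_{s,1}$ is a stationary subsolution for small $\epsilon$. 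So the solution $\underline u$ starting from it increases in $t$.
\item By the same compactness, $\underline u$ converges to a positive steady state.
\end{itemize}
Uniqueness in Lemma \ref{lm 4.1-0} forces both limits to equal $\hat u_{\bI}$. The upper limit $V$ is positive because $V\ge$ the lower limit. The squeeze then gives convergence in $C(\bar\bI)$, and the interior estimates upgrade it to $C^{2s+\gamma}_{\loc}$.

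\emph{Part (iii).} Monotonicity in the domain holds: if $\bI\subset\bI'$, then $\hat u_{\bI'}$ is a supersolution on $\bI$, so $\hat u_{\bI}\le\hat u_{\bI'}$ by the iteration argument of Lemma \ref{lm 4.1-0}. Also $\hat u_{\bI}\le u_*$. Hence, as $\bI\uparrow\R$, $\hat u_{\bI}$ increases to some $W\le u_*$. To deal with general $\ell_1\to-\infty$, $\ell_2\to\infty$, I sandwich between symmetric intervals. By interior estimates (Lemma \ref{lemmajfa} applied to the steady problem), the convergence is locally uniform and $W$ solves $d(-\Delta)^sW=f(W)$ in $\R$, with $W$ positive and bounded. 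To show $W\equiv u_*$, let $m=\inf W$, which is positive since $W\ge\hat u_{\bI_0}$ locally. Take $x_k$ with $W(x_k)\to m$. Translate, and pass to a limit $\tilde W$ that attains its minimum $m$ at $0$. There $(-\Delta)^s\tilde W(0)\le0$, so $f(m)\le0$, giving $m\ge u_*$. The positivity of the translated limit needs care. I would use the translation invariance of $\hat u_{(x_k-L,x_k+L)}\le W$, valid for large $k$ because the domain limit is monotone, which yields $m\ge \hat u_{(-L,L)}(0)>0$. Combined with $W\le u_*$, this gives $W\equiv u_*$.

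I expect the main obstacle to be the uniform-in-time boundary estimate $\bar u(t,x)\le C d_{\bI}^s$ needed for convergence in $C(\bar\bI)$. I plan to handle it by restarting Lemma \ref{lem-I} on each window $[t-1,t]$.
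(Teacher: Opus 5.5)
Your argument is correct. In part (i) it takes a different route from the paper; parts (ii) and (iii) are close to the paper's.

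\textbf{Part (i).} The paper uses the explicit supersolution $W_0=c_0e^{-(d\lambda_{s,1}(\bI)-f'(0))t}\phi_{s,1}$, with $c_0=\sup_{\bI}u_{\bI}(1,\cdot)/\phi_{s,1}$ finite by boundary regularity and the Hopf lemma. This gives exponential decay at once when $d\lambda_{s,1}(\bI)>f'(0)$. Only the critical case needs a monotone-in-time solution, started from $c_0\phi_{s,1}$. The paper then identifies its limit as a steady state using global H\"older estimates from \cite{FR2017,RV} and Barbalat's lemma.

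You treat both subcases with a single decreasing solution started from a large datum. You get compactness up to $\partial\bI$ by restarting Lemma \ref{lem-I} on unit time windows. This is legitimate, since its constant depends only on sup norms, $s$ and the window length, and not on the interval. You then identify the limit through the time-integrated equation rather than Barbalat. Your route is more self-contained: it uses only the Section 2 machinery and avoids \cite{RV}. The cost is that you get no decay rate, which the paper's $W_0$ supplies in the strict case.

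\textbf{Part (ii).} Your squeeze is the paper's. The difference is that the paper's upper datum is the torsion function $k\eta_{\bI}$, compared with $u_{\bI}(1,\cdot)$ rather than with $u_0$.

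\textbf{Part (iii).} Both arguments rest on domain monotonicity and translation invariance. Your inequality $W(x)\ge\hat u_{(x-L,x+L)}(x)=\hat u_{(-L,L)}(0)$, after letting $L\to\infty$, already gives $W(x)\ge W(0)$ for every $x$. Hence $W$ is constant, which is exactly the paper's argument. Your minimizing-sequence and translated-limit step is therefore unnecessary, though harmless.

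\textbf{A minor technical point.} Your upper datum $M\chi_{\bI}$ is discontinuous on $\partial\bI$, so it lies outside $\mathcal I_0(\bI)$. Lemma \ref{lm comp-1} assumes continuity on $\overline{\cQ_T}$, so it does not apply verbatim to $\bar u(\cdot+\tau)-\bar u$ or to $\bar u-u_{\bI}$. You can fix this in either of two ways:
\begin{itemize}
\item justify the comparison for bounded data through the heat-kernel representation; or
\item replace $M\chi_{\bI}$ by the continuous stationary supersolution $k\eta_{\bI}$ with $k>\sup_{\xi>0}f(\xi)$, and compare at $t=1$ using $u_{\bI}(1,\cdot)\le C d_{\bI}^s\le k\eta_{\bI}$ for $k$ large.
\end{itemize}
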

\begin{proof}     (i) Define
$$ W_0(t,x):=c_0 e^{-\big(d\lambda_{s,1}(\bI)-f'(0)\big) t } \phi_{s,1}(x),\quad (t,x)\in(0,+\infty)\times \R,    $$
where
$$c_0:=\sup_{x\in\bI}\frac{u_{\bI}(1,x)}{\phi_{s,1}(x)}$$
is a finite positive constant due to the boundary regularity for $u_{\bI}$ and the Hopf lemma for $\phi_{s,1}$.

Notice that for $(t,x)\in[1,+\infty)\times \bI,$
\begin{align}\label{e 4.1-1}
   \partial_t W_0(t,x)+ d (-\Delta)^s   W_0(t,x)-f( W_0(t,x))
   = f'(0) W_0(t,x)-f(  W_0(t,x))
   \geq 0,
\end{align}
since $f( W_0)\leq f'(0) W_0$. By the maximum principle we obtain $0\leq u_{\bI}(1+t,x) \leq W_0(t,x)$ for $(t,x)\in (0,+\infty)\times\R$.

 If $d \lambda_{s,1}(\bI )>  f'(0)$, we see from the definition of $W_0$ that $W_0(t,x)\to 0$ uniformly in $x\in\R$ as $t\to\infty$, and it follows that
$$\lim_{t\to+\infty}u_{\bI}(t,x)=0\quad{\rm uniformly\ in}\ \R. $$

We next consider the case $d \lambda_{s,1}(\bI )=  f'(0)$. In this case,
let $  U_0(t,x)$ be the solution of
\begin{equation}\label{eq 4.1-up}
\left\{ \arraycolsep=1pt
\begin{array}{lll}
\displaystyle \partial_t u+ d (-\Delta)^s  u=f(u)\quad \  &{\rm in}\ \   (0,+\infty)\times \bI,\\[2mm]
 \phantom{ (-\Delta)^s --\ \,   }
\displaystyle   u  = 0 \quad \ &{{\rm in}}\  \    ( 0,+\infty)\times \big(\R \setminus \bI\big), \\[2mm]
 \phantom{ (-\Delta)^s  \   }
  u(0,\cdot) = c_0\phi_{s,1}  \quad \ &{{\rm in}}\  \ \bI.
\end{array}
\right.
\end{equation}
 Since $W_0$ is a super solution of \eqref{eq 4.1-up}, we have, by the maximum principle,
$$ 0\leq U_0(t,x)\leq W_0(t,x)= c_0   \phi_{s,1}(x)=U_0(0,x)\quad {\rm for}\ \, t>0. $$
As a consequence,  for any $\epsilon>0$,
$$ U_0(t+\epsilon,x)\leq U_0(t,x)\quad {\rm for}\  t>0, $$
which implies that
the mapping $t\mapsto U_0(t,\cdot)$ is non-increasing and nonnegative.  This ensures that the limit of $U_0(t,\cdot)$ exists   as $t\to+\infty$. Denote
$$\hat{U}_0(x):=\lim_{t\to+\infty} U_0(t,x). $$

{\bf Claim:} $\hat{U}_0(x)\equiv 0$.

By \cite[Corollary 1.6]{FR2017} and \cite[Corollary 1.2]{RV}, for given $\gamma\in (0,s)$ and small $\epsilon>0$, there exists $C$ depending on $||U_0||_{L^\infty}$ and $||f(U_0)||_{L^\infty}$ such that
\begin{align}\label{4.7a}
&||\partial_{t} U_0||_{C_{t,x}^{\gamma/2s,\gamma}}((n,n+2)\times \bar \bI)+||U_0/d^s||_{C_{t,x}^{1/2+\gamma/2s,\gamma+s}}((n,n+2)\times \bar \bI)\nonumber\\
&+||U_0||_{C_{t,x}^{1-\epsilon,s}}((n,n+2)\times \bar \bI)\leq C
\end{align}
for all $ n=1,2,\cdots$.
Thus, $\partial_{t} U_0$ is uniformly continuous for $t\in [0,\infty)$. This, combined with Barbalat's lemma  and the convergence of $U_0$ as $t\to+\infty$, implies
\begin{align*}
\lim_{t\to +\infty}\partial_{t} U_0(t,x)=0,\ \ \forall x\in \bar \bI.
\end{align*}
Moreover, it follows from \eqref{4.7a} and the convergence of $U_0$ to $\hat U_0$  that $\hat{U}_0/d^s\in C^{\gamma+s-\epsilon_1}( \bar \bI)$, $\hat{U}_0\in C^{s-\epsilon_1}( \bar \bI)$ and
\begin{align*}
\lim_{t\to +\infty}(-\Delta)^s  U_0(t,x)=(-\Delta)^s \hat{U}_0(x),\ \  \ \lim_{t\to +\infty} f(U_0(t,x))=f(\hat{U}_0(x)), \ \ \ \ \forall\, x\in \bar\bI.
\end{align*}
Hence we can let $t\to\infty$ in \eqref{eq 4.1-up} to deduce that $\hat{U}_0(x)\geq 0$ is a solution of \eqref{eeq 4.1}.  By Lemma \ref{lm 4.1-0}, $\hat{U}_0(x)\equiv 0$ for all $x\in \R$, and the above claim is proved.
Moreover,
\begin{equation}\label{e 4.2}
\lim_{t\to+\infty}\|U_0(t,\cdot)\|_{L^\infty(\R)}=0.
\end{equation}
Since $0\leq u(1,x)\leq c_0\phi_{s,\bI,1}$, by the maximum principle we have $0\leq u(t+1, x)\leq U_0(t,x)$ for $t>0$ and $x\in\R$.
Thus (\ref{e 4.2}) implies that  $u_{\bI}(t,x)$ converges to  $0$ in $L^\infty(\R)$ as $t\rightarrow +\infty$.\medskip

 (ii) For $d \lambda_{s,1}(\bI )<  f'(0)$,  let $\bu  (t,x;  \tilde u_0)$ be the solution of
\begin{equation}\label{eq 4.1-up-ep}
\left\{ \arraycolsep=1pt
\begin{array}{lll}
\displaystyle \partial_t u+ d (-\Delta)^s  u=f(u)\quad \  &{\rm in}\ \   (0,+\infty)\times \bI,\\[2mm]
 \phantom{ (-\Delta)^s --\ \ \ \,  }
\displaystyle   u  = 0 \quad \ &{{\rm in}}\  \    ( 0,+\infty)\times \big(\R \setminus \bI\big), \\[2mm]
 \phantom{ (-\Delta)^s  \ \ \   }
  u(0,\cdot) = \tilde u_0  \quad \ &{{\rm in}}\  \ \bI
\end{array}
\right.
\end{equation}
for some $\tilde u_0\in \mathcal I_0(\bI)$. Taking $\tilde u_0=\epsilon \phi_{s,1}$,
direct computations show that for small $\epsilon>0$,
 \begin{align*}
 \partial_t \tilde u_0 + d (-\Delta)^s  \tilde u_0 -f( \tilde u_0)
  =    d\lambda_{s,1}(\bI) \phi_{s,1}-f(\epsilon   \phi_{s,1})
  \leq  0,
 \end{align*}
since $f'(0)>d\lambda_{s,1}(\bI)$ and $f(\tau)>\tau d\lambda_{s,1}(\bI) $ for small $\tau$.
Then the maximum principle implies  $\bu(t,x; \tilde u_0)\geq  \tilde u_0(x)=\bu(0,x;\tilde u_0)$ for $t>0$,
which implies that  the mapping $t\mapsto  \bu(t,\cdot; \tilde u_0)$ is non-decreasing. Clearly,  $u_*$ is a super solution for \eqref{eq 4.1-up-ep}, and so $\bu\leq u_*$. By a similar discussion as in (i) and using Lemma \ref{lm 4.1-0} again, we could show that there exists a  positive function  $\hat{u}_{\bI}\in C^{s-\epsilon_1}( \bar \bI)$ such that $\hat{u}_{\bI}(x)$ is the unique positive solution of \eqref{eeq 4.1}, and  $\hat{u}_{\bI}/d^s\in C^{\gamma+s-\epsilon_1}( \bar \bI)$,
 \begin{equation}\label{4.10}
	\lim_{t\to+\infty}
	\| \bu(t,\cdot; \tilde u_0)- \hat{u}_{\bI} \|_{C(\bar \bI)}=0.
\end{equation}

 Let $\eta_{\bI}$ be the solution of
\begin{equation*}
	\left\{ \arraycolsep=1pt
	\begin{array}{lll}
		\displaystyle   d (-\Delta)^s v=  1 \quad \  &{\rm in}\ \    \bI,\\[2mm]
		\phantom{ (-\Delta)^s\ \,   }
		\displaystyle  v=0 \quad \ &{{\rm in}}\  \    \R \setminus \bI.
	\end{array}
	\right.
\end{equation*}
For a given large constant $k$ such that $ k>\max_{\xi>0} f(\xi)$, we derive
\begin{align*}
d (-\Delta)^s  (k \eta_{\bI}(x)) -f   (k \eta_{\bI}(x))
	=   k-f(  k \eta_{\bI}(x)) \geq  0.
\end{align*}
Similarly to the above discussion, we can show that
the mapping $t\mapsto  \bu(t,\cdot\,; k \eta_{\bI})$ is decreasing and positive, and
\begin{align}\label{4.11}
\lim_{t\to+\infty}
\| \bu(t,\cdot\, ; k \eta_{\bI})- \hat{u}_{\bI} \|_{C(\bar \bI)}=0.
\end{align}

By taking $\epsilon>0$ small enough and $k>0$ large enough, we have
\[
k \eta_{\bI}\geq u(1,\cdot)\geq \epsilon\phi_{s,1}.
\]
It then follows from the comparison principle that
\[
\bu(t,\cdot\,; k \eta_{\bI})\geq u(t+1,\cdot) \geq \bu(t,\cdot\, ; \epsilon\phi_{s,1}) \mbox{ for } t>0.
\]
  The desired conclusion now follows immediately from  \eqref{4.10} and  \eqref{4.11}.
\smallskip

(iii)  We see from the proof of (ii) above  that
 \begin{align}\label{bon-1}
\hat{u}_{\bI}\leq u_*.
   \end{align}

If $\bI_1\subset \bI_2$, then clearly $\eta_{\bI_1}\leq \eta_{\bI_2}$ and
thus $\bu(t,x; k \eta_{\bI_1})\leq \bu(t,x; k \eta_{\bI_2})$ for any $(t,x)\in (0,+\infty)\times \R$,
which implies that
 \begin{align}\label{mon-1}
 \hat{u}_{\bI_1}\leq\hat{u}_{\bI_2}\leq u_*\quad {\rm in}\ \R.
   \end{align}
 This shows that the mapping $\bI\mapsto \hat{u}_{\bI}$ is non-decreasing. Taking $\bI_n=(-n,n)$, then
 \begin{align*}
 \hat{u}_{\R}(x):=\lim_{n\to \infty} \hat{u}_{\bI_n}(x)\ {\rm is\ well\ defined}.
 \end{align*}

\noindent{\bf Claim:}  $\hat{u}_{\R}(x)$ is a constant.

 If $x_2-1<x_1<x_2$ and
  \begin{align*}
  \bI_{2n+1}^*:= \bI_{2n+1}-x_1+x_2:=\{x+x_1-x_2:x\in \bI_{2n+1} \}.
 \end{align*}
 Then $\hat{u}_{\bI_{2n+1}}(x_2)= \hat{u}_{\bI_{2n+1}^*}(x_1)$. Noting that $\bI_{2n}\subset \bI_{2n+1}^*$, we have
 $ \hat{u}_{\bI_{2n}}(x)\leq \hat{u}_{\bI_{2n+1}^*}(x)$, and specifically,  $ \hat{u}_{\bI_{2n}}(x_1)\leq \hat{u}_{\bI_{2n+1}^*}(x_1)$. Thus,
 \begin{align*}
 \hat{u}_{\R}(x_1)=\lim_{n\to \infty} \hat{u}_{\bI_{2n}}(x_1)\leq \lim_{n\to \infty} \hat{u}_{\bI_{2n+1}^*}(x_1)=\lim_{n\to \infty} \hat{u}_{\bI_{2n+1}}(x_2)=\hat{u}_{\R}(x_2).
 \end{align*}
This implies that $\hat u_{\R}(x)$ is non-decreasing in $x$. We can similarly show that $\hat u_{\R}(x)$ is non-increasing in $x$. Hence it is a constant function, as claimed.

  We now show that $\hat{u}_{\R}(x)\equiv u_*$. By the regularity estimate \cite{RS},   there is a constant  $C>0$ independent of $n\geq 1$ such that  for the bounded interval $\bI_n=(-n,n)$,  we have
  $$\|\hat{u}_{\bI_{2n}}\|_{C^{2s+\delta}(\bI_{n})}\leq C.$$
 It then follows easily that $\displaystyle \hat{u}_{\R}(x)=\lim_{n\to \infty} \hat{u}_{\bI_n}(x)$ is a classical solution of
 \begin{equation}\label{eeq 4.1-whole}
 \left\{ \arraycolsep=1pt
\begin{array}{lll}
\displaystyle   d (-\Delta)^s v= f(v ) \quad \  &{\rm in}\ \    \R,\\[2mm]
 \phantom{ (-\Delta)^s\    }
 v\gneqq  0\quad \ &{{\rm in}}\  \   \R.
\end{array}
\right.
\end{equation}
Since $ \hat{u}_{\R}(x)$ is a positive constant, we must have $\hat{u}_{\R}\equiv u_*$,
and so, due to the monotonicity of $\bI\mapsto \hat{u}_{\bI}$, we obtain
  $\hat{u}_{\bI}\to u_*$  locally uniformly in $\R$ as $\bI\to \R$.  \end{proof}

\subsection{Spreading-Vanishing dichotomy}

 Let $\bI$ be a finite open interval in $\R$. For $l>0$, set  $l \bI=\{l x:x\in\bI\}$
and $\phi_{l,i}(x)=l^{-\frac12} \phi_{s,\bI,i} (l^{-1}x)$ in $\R$, $i\in\mathbb N$, where $\{(\lambda_{s, i}(\bI), \phi_{s,\bI, i})\}$ is the sequence of eigenvalues and corresponding eigenfunctions of $(-\Delta)^s$ over $\bI$ with Dirichlet boundary conditions. Then  for $x=ly\in l\bI$,
 \begin{align*}
 (-\Delta)^s \phi_{l,i}(x)=l^{-\frac12} (-\Delta_x)^s \big(\phi_{s,\bI,i}(l^{-1}x)\big)&=l^{-2s-\frac12}  \big((-\Delta_y)^s (\phi_{s,\bI,i}\big) (y)
 \\&\, =\,  l^{-2s-\frac12} \lambda_{s,i}(\bI) \phi_{s,\bI,i}  (y)\, =l^{-2s} \lambda_{s,i}(\bI) \phi_{l, i}(x) ,
 \end{align*}
which implies that for any $i\in\N$,
\begin{equation}\label{eq 2.1-eig}
\lambda_{s,i}(l\bI)=l^{-2s} \lambda_{s,i}(\bI).
\end{equation}
In particular, if we take $\hat\bI=(0,1)$, then $\lambda_{s, i}(l\hat\bI)=l^{-2s}\lambda_{s,i}(\hat\bI)$ is strictly decreasing in $l\in (0,\infty)$ and
\[
\lim_{l\to\infty}\lambda_{s,i}(l\hat\bI)=0,\  \ \ \ \lim_{l\to 0}\lambda_{s,i}(l\hat\bI)=\infty \mbox{ for every } i\in\mathbb N.
\]
Moreover, it is easily seen that for any $\alpha\in\R$, with $I_\alpha=I+\alpha:=\{x:  x=y+\alpha,\ y\in \bI\}$, it holds
\[
\lambda_{s,  i}(\bI_\alpha)=\lambda_{s, i}(\bI) \mbox{ for every } i\in\mathbb N.
\]
It follows that with $\hat\lambda_1:=\lambda_{s,1}(\hat \bI)$,  for any finite interval $\bI$, we have
\[
\lambda_{s,1}(\bI)=|\bI|^{-2s}\hat\lambda_1.
\]
Therefore there exists a unique $l_*>0$ such that
\begin{equation}\label{l*}
\lambda_{s,1}(\bI)\begin{cases} >f'(0)/d &\mbox{ if \ } |I|<l_*,\\
=f'(0)/d &\mbox{ if \ } |I|=l_*,\\
<f'(0)/d &\mbox{ if \ } |I|>l_*.
\end{cases}
\end{equation}

\begin{theorem}\label{thm-s-v}
Assume that $s\in(0,1)$,  $f$ verifies ${\bf (f1)-(f4)}$, $h_0>0$   and    
$u_0  \in  \mathcal I_0 ((-h_0, h_0))$.  Let  $(g ,h ,u)$  be the unique solution of  the problem \eqref{eq 1.1}   and  $\cO_t:=(g(t), h(t))$, $\cO_\infty=(g_\infty,h_\infty)=:\displaystyle \lim_{t\to+\infty}\cO_t$.
Then the following dichotomy holds, namely,

\hspace{0.3cm} {\rm either (i)} \underline{\rm Vanishing:}  $h_\infty-g_\infty< + \infty$ and
	\[
	|\cO_\infty|\leq l_*, \mbox{\  \ $u(t,x)\rightarrow 0$ uniformly in $x\in\R$ as
	$t\rightarrow+\infty$},\]
	
	\hspace{0.3cm} {\rm or (ii)} \underline{\rm Spreading:}  $h_\infty-g_\infty=+\infty$ and
	\[
	\cO_\infty= \R,
	\mbox{\  \ $u(t,x)\rightarrow u_*$ locally uniformly in $\R$  as
	$t\rightarrow+\infty$.}
	\]

\end{theorem}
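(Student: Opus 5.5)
Since $g$ is nonincreasing and $h$ nondecreasing, the limits $g_\infty\in[-\infty,-h_0)$ and $h_\infty\in(h_0,\infty]$ exist. We split according to whether $h_\infty-g_\infty$ is finite, and in each case prove the listed properties. A preliminary fact is that $|g_\infty|=\infty$ if and only if $h_\infty=\infty$, so that the only alternative to $h_\infty-g_\infty<\infty$ is $\cO_\infty=\R$. To get it, suppose $h_\infty<\infty$ but $g_\infty=-\infty$. Then for large $t$ the interval $\cO_t$ contains an interval $\bI$ with $|\bI|>l_*$. Comparing with the fixed-domain solution on $\bI$, via Lemma \ref{lm comp-1} and Proposition \ref{pr cyl-1}(ii), gives $\liminf_{t\to\infty} u\ge \hat u_{\bI}>0$ on compact subsets of $\bI$, uniformly in $t$. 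We choose $\bI$ at a fixed distance from $h_\infty$. The free boundary condition then gives
\[
h'(t)\ge \mu\frac1{2s}\int_{\bI'}u(t,x)[h(t)-x]^{-2s}dx\ge c>0
\]
for large $t$, and this contradicts $h_\infty<\infty$.

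\textbf{Vanishing case} ($h_\infty-g_\infty<\infty$). First I show $|\cO_\infty|\le l_*$. If $|\cO_\infty|>l_*$, there are $T_0$ and an interval $\bI\subset\cO_{T_0}$ with $|\bI|>l_*$. The comparison above gives $u(t,\cdot)\ge \tfrac12\hat u_{\bI}$ on a fixed compact $\bI'\subset\bI$ for all large $t$. Hence $h'(t)\ge c>0$ for all large $t$, which contradicts $h_\infty<\infty$. Next I show $u\to0$ uniformly. For every $t$ we have $u\le \bar U$, where $\bar U$ solves \eqref{eq 4.1} on the fixed interval $\cO_\infty$ with initial datum $u_0$; this follows from Lemma \ref{lm comp-1}, since $\bar U\ge0$ outside $\cO_t$. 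Because $d\lambda_{s,1}(\cO_\infty)\ge f'(0)$ when $|\cO_\infty|\le l_*$, Proposition \ref{pr cyl-1}(i) gives $\bar U\to0$ uniformly, and hence $u\to0$ uniformly.

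\textbf{Spreading case} ($\cO_\infty=\R$). For the upper bound, compare $u$ with the ODE solution $\bar w'=f(\bar w)$, $\bar w(0)=\|u_0\|_\infty$. The spatially constant $\bar w$ is a supersolution by Lemma \ref{lm comp-1}, and $\bar w\to u_*$ by (f3)--(f4), so $\limsup u\le u_*$ uniformly. For the lower bound, fix $L$ large and a small $\epsilon>0$. By Proposition \ref{pr cyl-1}(iii) we can take $\bI=(-L,L)$ with $\hat u_{\bI}\ge u_*-\epsilon$ on a given compact set $K$. Choose $T_L$ with $\bI\subset\cO_t$ for $t\ge T_L$. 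Comparing $u(t+T_L,\cdot)$ with the solution of \eqref{eq 4.1} on $\bI$ with initial datum $u(T_L,\cdot)\chi_{\bI}$, which is positive and continuous after a small cutoff below, and applying Proposition \ref{pr cyl-1}(ii), gives $\liminf_{t\to\infty} u\ge u_*-\epsilon$ on $K$.

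The main obstacle is the admissibility of the comparison initial data. $u(T_L,\cdot)$ restricted to $\bI$ does not vanish on $\partial\bI$, so it is not in $\mathcal I_0(\bI)$. I would replace it by $\min\{u(T_L,\cdot),\delta\phi_{s,1,\bI}\}$, which is positive because $u(T_L,\cdot)>0$ on $\bar\bI\subset\cO_{T_L}$ by the strong maximum principle in Lemma \ref{lm comp-1}. The same device handles the preliminary step and the lower bound in the vanishing case. The other technical point is extracting a uniform positive lower bound for the flux integral. This comes from the convergence to $\hat u_{\bI}$ in $C(\bar\bI)$ and the fact that the weight $[h(t)-x]^{-2s}\ge (h_\infty-g_\infty)^{-2s}$ is bounded below when the range is finite.
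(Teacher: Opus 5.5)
Your proof is correct and follows essentially the paper's route. You compare with the fixed-domain problem \eqref{eq 4.1} via Lemma \ref{lm comp-1} and Proposition \ref{pr cyl-1}, and use a positive lower bound on $h'$ (or $-g'$) to rule out $|\cO_\infty|>l_*$ and one-sided spreading. Vanishing then comes from domination by the solution on $\cO_\infty$, and the upper bound in the spreading case from an ODE supersolution, exactly as in the paper. The paper is simpler in one respect: it takes $\bI=\cO_{t_0}$ itself with initial datum $u(t_0,\cdot)$, which already lies in $C_0(\bar\cO_{t_0})$ because it is continuous and vanishes off $\cO_{t_0}$. Proposition \ref{pr cyl-1}(iii) then handles these non-symmetric intervals as both endpoints tend to $\pm\infty$, so the truncation $\min\{u(T_L,\cdot),\delta\phi_{s,1,\bI}\}$ you flag as the main obstacle is unnecessary.
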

\noindent{\bf Proof. } (i) We prove the conclusion indirectly.  Suppose  that  $h_\infty-g_\infty< + \infty$ and $ |\cO_\infty|> l_*$, then
$$d \lambda_{s,1}(\cO_\infty)- f'(0)<0.$$
 Thus there exists $t_0>0$ such that $d \lambda_{s,1}(\cO_t)- f'(0)<0$ for $t\geq t_0$.  Let $\bu(t,x; \tilde u_0)$  be the solution of (\ref{eq 4.1}) with initial function $\tilde u_0$ and $\bI=\cO_{t_0}$.
Taking $\tilde u_0(x)=  u(t_0,x)$,  then it follows from  Lemma  \ref{lm comp-1}  that
$$u(t+t_0,\cdot)\geq  \bu(t,\cdot; u_0)\quad {\rm in}\ \ \R. $$
Since $d \lambda_{s,1}(\cO_{t_0})- f'(0))< 0$, by Proposition \ref{pr cyl-1} (ii) we have  $\bu(t,\cdot, u_0)\to v_{\cO_{t_0}}$ as $t\to+\infty$ ,
where $v_{\cO_{t_0}}$ is a positive solution of (\ref{eeq 4.1})  with $\bI=\cO_{t_0}$.
Then for large $t>0$, we derive  $\bu(t,\cdot, u_0)\geq  v_{\cO_{t_0}}/2$, and
 \begin{align*}
	h'(t)&=  \frac{ \mu}{2s}\int_{g(t)}^{h(t)}   u(t,x) |{h(t)}-x|^{-2s}  dx
	\\[1mm]& \geq  \frac{ \mu}{2s}\int_{g(t)}^{h(t)}   \frac{v_{\cO_{t_0}}}{2} |{h(t)}-x|^{-2s}  dx
	\\[1mm]&\geq  \frac{ \mu}{2s}\int_{g(t_0)}^{h(t_0)}   \frac{v_{\cO_{t_0}}}{2} |h_\infty-x|^{-2s}  dx
	=: C>0.
\end{align*}
Similarly we can show $-g'(t)\geq \tilde C>0$ for all large $t>0$.
 Therefore
$$  \lim_{t\to+\infty} h(t)=+\infty, \quad \lim_{t\to+\infty} g(t)=-\infty,$$
 a contradiction to the assumption that $h_\infty-g_\infty<\infty$. This proves   $|\cO_\infty|\leq l_*$.

 Let $\bu_1(t,x; u_0)$  be the solution of (\ref{eq 4.1}) with $\bI=\cO_{\infty}$.   It follows from Proposition \ref{pr cyl-1} (i)  that  $\bu_1(t,\cdot, u_0)\to 0$ as $t\to+\infty$ uniformly for $x\in\R$ since $d \lambda_{s,1}(\cO_\infty)- f'(0)\geq 0$.  By the maximum principle, we deduce $0\leq u(t,x)\leq  \bu_1(t,x; u_0)$ for $t\geq 0$ and $x\in\R$,  and so $u(t,x)\rightarrow 0$ uniformly in $x\in\R$ as
$t\rightarrow+\infty$.

\medskip

(ii) From  $|\cO_{t}|\to+\infty$ and the fact that $\lambda_{s,1}(\cO_t)\sim |\cO_t|^{-2s}$ as $t\to+\infty$, we see $\displaystyle \lim_{t\to\infty}d \lambda_{s,1}(\cO_t)=0$. Hence, there exists $t_0>0$ such that $d \lambda_{s,1}(\cO_t)- f'(0))<0$ for $t\geq t_0$.
As in the proof of part (i) above, we could show the following results by an indirect argument:
$$\lim_{t\to+\infty} h(t)=+\infty,\ \ \lim_{t\to+\infty} g(t)=-\infty. $$

Fix  $t_0>0$ large so that $d \lambda_{s,1}(\cO_{t_0})- f'(0))<0$, and let $\bu$ be the unique solution of
\begin{equation}\label{eq 4.1-n}
\left\{ \arraycolsep=1pt
\begin{array}{lll}
\displaystyle \partial_t u+ d (-\Delta)^s  u=f(u)\quad \  &{\rm in}\ \   (0,+\infty)\times \cO_{t_0},\\[2mm]
 \phantom{ (-\Delta)^s --\ \,   }
\displaystyle   u  = 0 \quad \ &{{\rm in}}\  \    ( 0,+\infty)\times \big(\R \setminus \cO_{t_0} \big), \\[2mm]
 \phantom{ (-\Delta)^s  \   }
  u(0,\cdot) = u(t_0,\cdot)  \quad \ &{{\rm in}}\  \ \cO_{t_0}.
\end{array}
\right.
\end{equation}
By Proposition \ref{pr cyl-1} (ii),  we have $\bu(t,x)\to v_{_{\cO_{t_0}}}$ as $t\to+\infty$.
The maximum principle implies that $u(t+t_0,x)\geq \bu(t,x)$ in $( 0,+\infty)\times  \R$, which implies
$$\liminf_{t\to\infty} u(t,x)\geq v_{_{\cO_{t_0}}}(x)\quad {\rm for}\ x\in\R. $$
In view of Proposition \ref{pr cyl-1} (iii), we can let $t_0\to+\infty$ to deduce $\displaystyle  \liminf_{t\to+\infty} u(t,x)\geq  u_*$ locally uniformly for $x\in\R$. On the other hand, it is clear that the solution of the ODE problem $w=f'(w)$ with $w(0)=1+\max u_0$ is a super solution of \eqref{eq 1.1}, which implies $\displaystyle \limsup_{t\to+\infty} u(t,x)\leq \lim_{t\to+\infty} w(t)=u_*$. Therefore, $\displaystyle \lim_{t\to+\infty} u(t,x)= u_*$ locally uniformly for $x\in\R$.
\hfill$\Box$\medskip

%\noindent{\bf Proof of Theorem \ref{teo 2}.} The proof follows by Proposition \ref{thm-vanishing} Directly. \hfill$\Box$

 \subsection{Spreading-vanishing criteria}

 \begin{theorem}\label{thm-criteria}
 Assume that   $s\in(0,1)$,  $f$ verifies ${\bf (f1)-(f4)}$,  $h_0>0$ and         $u_0\in \mathcal I_0((-h_0, h_0))$.
Let  $(u,g,h)$ be the unique solution of   problem \eqref{eq 1.1} and $l_*$ be given in \eqref{l*}. Then spreading always happens to $(u,g,h)$ regardless of the choice of
$u_0\in \mathcal I_0((-h_0, h_0))$ if $h_0\geq l_*/2$. If $h_0<l_*/2$,
 then there exists $\mu^*\in(0,+\infty)$ dependent on $u_0$ such that
 \[\begin{cases}
 \mu\leq \mu^*\implies \mbox{vanishing}, \\  \mu>\mu^*\implies \mbox{spreading}.
 \end{cases}
\]
 \end{theorem}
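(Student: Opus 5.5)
The plan follows the classical scheme of \cite{DuLin2010, CDLL2019}, built on Theorem \ref{thm-s-v} and the comparison principle (Lemma \ref{c-p} and Remark \ref{rm-cp}).

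\textbf{Case $h_0\ge l_*/2$.} Since $g$ is nonincreasing and $h$ is nondecreasing, we have $|\cO_\infty|\ge |\cO_t|>2h_0\ge l_*$ for every $t>0$. Strictness holds because $h'(t)>0$ for $t>0$, which follows from $u>0$ (Lemma \ref{lm comp-1}). By Theorem \ref{thm-s-v}(i), vanishing forces $|\cO_\infty|\le l_*$, so spreading must occur, regardless of $\mu$ and $u_0$.

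\textbf{Case $h_0<l_*/2$.} Write $(u_\mu,g_\mu,h_\mu)$ to show the dependence on $\mu$, and set
\[
\Sigma:=\{\mu>0:\ \text{spreading occurs}\},\qquad \mu^*:=\inf\Sigma .
\]
There are four steps.

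\emph{Step 1 (monotonicity).} If $\mu_1<\mu_2$, then $(u_{\mu_1},g_{\mu_1},h_{\mu_1})$ is a lower solution for the $\mu_2$ problem. A variant of Lemma \ref{c-p} with equal initial data then gives $h_{\mu_1}\le h_{\mu_2}$, $g_{\mu_1}\ge g_{\mu_2}$ and $u_{\mu_1}\le u_{\mu_2}$. To handle the equal initial data, I would perturb $h_0$ slightly and use continuous dependence on initial data; continuity of $(u,g,h)$ in $(\mu,h_0,u_0)$ on finite time intervals follows from the uniqueness argument together with Lemma \ref{u-c}. Consequently $\Sigma$ is an interval of the form $(\mu^*,\infty)$ or $[\mu^*,\infty)$.

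\emph{Step 2 (vanishing for small $\mu$).} I would build an upper solution as in \cite{CDLL2019}:
\[
\bar h(t)=h_0(1+\delta-\tfrac\delta2 e^{-\gamma t}),\qquad \bar g=-\bar h,\qquad \bar u=Ce^{-\gamma t}\phi(x),
\]
where $\phi$ is the principal eigenfunction on $(-h_1,h_1)$, with $h_1=h_0(1+\delta)<l_*/2$, so that $d\lambda_{s,1}-f'(0)=:2\gamma>0$.

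1. The inequality $\bar u_t+d(-\Delta)^s\bar u\ge f(\bar u)$ holds by {\bf (f4)}.
2. The free boundary inequality needs $\mu\,C e^{-\gamma t}\int\phi\,(\bar h-x)^{-2s}dx\le \bar h'$. Because $\phi\sim d^s$ near the boundary (Hopf lemma and boundary regularity \cite{RS, GS}), the integral is finite. The inequality therefore holds once $\mu$ is small.
3. A subtlety is that $\bar u$ must vanish outside $(\bar g(t),\bar h(t))$, whereas $\phi$ lives on the fixed interval. I would instead take $\phi$ on a fixed interval $(-h_1,h_1)$ containing $(\bar g,\bar h)$. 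Then $\bar u\ge0$ outside the moving interval, which Remark \ref{rm-cp} allows.
4. To dominate $u_0$ we need $Ce^{0}\phi\ge u_0$ on $(-h_0,h_0)$. This is possible because $u_0$ has compact support in $[-h_0,h_0]$ and $\phi>0$ there.

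Comparison then gives $h_\infty\le h_1<\infty$, so vanishing occurs and $\mu^*>0$.

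\emph{Step 3 (spreading for large $\mu$).} Suppose vanishing holds for all large $\mu$. By Theorem \ref{thm-s-v}, $h_\mu-g_\mu\le l_*$ uniformly in $\mu$. Integrating the free boundary equation gives
\[
\mu\int_0^\infty\!\!\int u_\mu(h_\mu-x)^{-2s}\,dx\,dt\le l_* .
\]
Compare $u_\mu$ from below with the fixed-domain solution on $(-h_0,h_0)$, which is independent of $\mu$ and positive on $[0,1]$. This makes the double integral bounded below by a constant $c>0$ independent of $\mu$, so $\mu c\le l_*$, a contradiction for large $\mu$. Hence $\mu^*<\infty$.

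\emph{Step 4 (the threshold itself vanishes).} I would show $\Sigma$ is open, by contradiction. If spreading holds at $\mu^*$, then $h_{\mu^*}(T)-g_{\mu^*}(T)>l_*$ for some $T$. By continuous dependence, the same holds for $\mu$ slightly below $\mu^*$. Monotonicity of the interval then yields spreading there, contradicting the definition of $\mu^*$.

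The main obstacle is the continuous dependence of $(u,g,h)$ on $\mu$, and on $h_0$ for the strict-inequality perturbation. I would obtain it by a compactness argument: use the equicontinuity bound $h'\le \hat C_0/\sqrt t$ from the proof of Theorem \ref{thm-ex}, Lemma \ref{u-c}, and uniqueness (Theorem \ref{thm-uniq}) to identify the limit.
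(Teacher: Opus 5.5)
Your case $h_0\ge l_*/2$, the monotonicity in $\mu$ (perturbing $h_0$), spreading for large $\mu$, and the exclusion of $\mu^*$ from $\Sigma$ all agree in substance with the paper's Steps 3, 2 and 4. The genuine gap is in your Step 2, vanishing for small $\mu$.

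\textbf{Why the upper solution fails for $s\ge\frac12$.} In item 3 you take $\phi$ to be the principal eigenfunction of the \emph{fixed} interval $(-h_1,h_1)$, while $\bar h(t)<h_1$ for all $t$.
\begin{itemize}
\item Then $\phi(\bar h(t))>0$, so the flux $\int_{\bar g(t)}^{\bar h(t)}\bar u(t,x)\,[\bar h(t)-x]^{-2s}\,dx$, which Remark \ref{rm-cp} requires to be at most $\bar h'(t)/\mu$, equals $+\infty$ whenever $s\ge\frac12$. This holds no matter how small $\mu$ is.
\item Your justification in item 2 ("$\phi\sim d^s$ near the boundary") concerns the distance to $\pm h_1$, not to the moving boundary $\bar h(t)$. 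So items 2 and 3 are incompatible.
\item You also cannot rescue the comparison at a touching time $t_0$ from $u\le\bar u$ alone. That bound gives no control on how $u(t_0,\cdot)$ vanishes at $h(t_0)=\bar h(t_0)$, and that rate is exactly what enters $h'(t_0)$.
\end{itemize}
For $s<\frac12$ your construction does work, because then $\int^{\bar h}(\bar h-x)^{-2s}dx<\infty$.

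\textbf{What is missing.} You need an upper solution that vanishes on the \emph{moving} boundary like $\rho^s$ and also decays exponentially in $t$.

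\emph{The paper's route.} It takes $\bar u$ to be the solution of the semilinear problem on the prescribed domain $(\bar g(t),\bar h(t))$ itself, with $\bar h(t)=h_0+\beta(1-\frac1{2+t})$ and $2h_0+2\beta<l_*$.
\begin{itemize}
\item Lemma \ref{lm-decay} gives $\bar u\le\tilde C_0e^{-\epsilon t}\bar\rho(t,x)^s$ for $t>T$.
\item This bounds the flux by $Ce^{-\epsilon t}(\bar h-\bar g)^{1-s}$.
\item The initial layer $t\in(0,T]$ is handled separately, through the a priori estimate $h(T)-g(T)\le 2h_0+\beta/2$ for small $\mu$.
\end{itemize}

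\emph{A repair within your framework.} Use the dilated eigenfunction $\bar u(t,x)=Ce^{-\gamma t}\phi_1(x/\bar h(t))$, where $\phi_1$ is the principal Dirichlet eigenfunction on $(-1,1)$.
\begin{itemize}
\item It vanishes outside $(\bar g(t),\bar h(t))$.
\item Its flux is $C\mu e^{-\gamma t}\bar h^{1-2s}\int_{-1}^{1}\phi_1(y)(1-y)^{-2s}dy<\infty$.
\item It satisfies $(-\Delta)^s\bar u=\lambda_{s,1}((-\bar h(t),\bar h(t)))\,\bar u\ge\lambda_{s,1}((-h_1,h_1))\,\bar u$.
\item Differentiating the dilation in $t$ produces the extra term $-Ce^{-\gamma t}\phi_1'(x/\bar h)\,x\bar h'/\bar h^2$. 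This is nonnegative provided $\phi_1$ is even and nonincreasing in $|y|$, which is a known symmetry property you would need to cite.
\end{itemize}
With either fix the rest of your Step 2 goes through.
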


 The following lemma will be used to prove Theorem \ref{thm-criteria}.
 \begin{lemma}\label{lm-decay}
Assume that   $s\in(0,1)$,  $f$ verifies ${\bf (f1)-(f4)}$, $h_0>0$, $u_0\in\mathcal I_0((-h_0, h_0))$ and  $(g,h)\in \mathbb G_{h_0, \infty}\times \mathbb H_{h_0, \infty}$.
Denote $\cO_t=(g(t), h(t))$, $\displaystyle  \cO_{\infty}=\lim_{t\to+\infty}(g(t),h(t))$. Suppose that
$$|\cO_\infty|<l_* \ \mbox{ and } \ f'(0)<d\lambda_{s,1}(\cO_{\infty}),$$
and $u_{g,h}$ is the unique solution of
\begin{equation}\label{eq 4.1-e0}
 \left\{
\begin{array}{lll}
\partial_t  u  +d(-\Delta)^s  u=f(u)    \quad &{\rm for}\ \,   t\in(0,+\infty),\ x\in \cO_{t},\\[3mm]
 u(t,x)=0  \quad  &{\rm for}\ \, t\in(0,+\infty),\ \, x\in \R \setminus \cO_{t},\\[3mm]
 u(0,x)= u_0(x)  &{\rm for}\ \,  x\in\R.
\end{array}\right.
\end{equation}
 Then there exists $\tilde C_0>0$, $\epsilon>0$ and $T>0$  such that
\begin{equation}\label{e 4.10}
 0\leq  u_{g,h}(t,x) \leq \tilde C_0\,  e^{-\epsilon t} \rho (t,x)^{s}\ \mbox{ for }  x\in \cO_{t} \mbox{ and all } t>T,
\end{equation}
where   $\rho(t,x)=\min\{h(t)-x,\, x-g(t)\}$.
 \end{lemma}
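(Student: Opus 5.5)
The plan is to compare $u_{g,h}$ first with a solution on the fixed interval $\bI:=\cO_\infty$, which gives exponential decay in $L^\infty$. Then, on each unit time window, we compare with a linear problem on the rectangle $[t-1,t]\times\cO_t$, which converts the sup-norm decay into the boundary factor $\rho(t,x)^s$ measured from the \emph{current} boundary $\partial\cO_t$.

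\textbf{Step 1 (decay on the limiting interval).} By {\bf (f4)} we have $f(u)\le f'(0)u$ for $u\ge 0$. Since $\cO_t\subset\cO_\infty$ and $u_{g,h}\ge 0$, Lemma \ref{lm comp-1} gives $0\le u_{g,h}\le U$, where $U$ solves
\[
\partial_t U+d(-\Delta)^sU=f'(0)U \ \mbox{ in } (0,\infty)\times\bI, \qquad U=0 \ \mbox{ outside } \bI, \qquad U(0,\cdot)=u_0 .
\]
Explicitly,
\[
U(t,x)=e^{f'(0)t}\int_{\bI}{\bf H}^s_{\bI}(dt,x,y)\,u_0(y)\,dy .
\]
(Time is rescaled by $d$; constants change harmlessly.) By \eqref{hk-est2}, for $t\ge 1$ we have ${\bf H}^s_{\bI}(dt,x,y)\le c\,e^{-d\lambda_{s,1}(\bI)t}$. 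Here $\lambda_{s,1}(\bI)=\lambda^s_1/|\bI|^{2s}$ and $d_{\bI}$ is bounded. Hence
\[
\|u_{g,h}(t,\cdot)\|_\infty\le C_1e^{-\epsilon t} \quad \mbox{for } t\ge 1, \qquad \epsilon:=d\lambda_{s,1}(\cO_\infty)-f'(0)>0 ,
\]
where $C_1$ depends on $\|u_0\|_\infty$ and $|\cO_\infty|$.

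\textbf{Step 2 (boundary factor relative to $\cO_t$).} Fix $t\ge 2$. Because $g$ is nonincreasing and $h$ is nondecreasing, $\cO_\tau\subset\cO_t$ for $\tau\in[t-1,t]$. We compare $u_{g,h}$ on $[t-1,t]$ with $V$, the solution of
\[
\partial_\tau V+d(-\Delta)^sV=f'(0)V \ \mbox{ on } (t-1,t]\times\cO_t, \qquad V=0 \ \mbox{ outside } \cO_t, \qquad V(t-1,\cdot)=u_{g,h}(t-1,\cdot).
\]
Lemma \ref{lm comp-1} applies because $u_{g,h}=0\le V$ outside $\cO_\tau$ and $f(u)\le f'(0)u$. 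This gives $u_{g,h}(t,x)\le V(t,x)$, where
\[
V(t,x)=e^{f'(0)}\int_{\cO_t}{\bf H}^s_{\cO_t}(d,x,y)\,u_{g,h}(t-1,y)\,dy .
\]
Now use \eqref{hk-est1} together with the $y$-integral estimate from the proof of Lemma \ref{lem-I}, which bounds $\int(t^{-1/2s}\land t|x-y|^{-1-2s})\,dy\le 2+1/s$. This yields
\[
V(t,x)\le e^{f'(0)}c_2(2+\tfrac1s)\,(1\land d^{-1/2}\rho(t,x)^s)\,\|u_{g,h}(t-1,\cdot)\|_\infty .
\]
Combining with Step 1 gives \eqref{e 4.10} with $T=2$ and $\tilde C_0=C\,C_1e^{\epsilon}$.

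\textbf{Main obstacle.} The delicate point is that the heat kernel constant $c_2$ must be uniform over the moving family of intervals $\cO_t$. This is exactly Remark \ref{rm-hk}: the constant in \eqref{hk-est1} is independent of the interval. In any case the lengths $|\cO_t|$ lie in $[2h_0,|\cO_\infty|]$, so $c_2$ would be uniform for the fixed time horizon $d$ even without that remark. The remaining check is the justification of the comparison in Step 2, namely that $u_{g,h}$ is a subsolution on the rectangle with zero exterior data. This follows from the classical regularity in Theorem \ref{thm-semi}.
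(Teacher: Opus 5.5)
Your proof is correct, and it shares the paper's two-stage architecture. A comparison on the limiting interval $\cO_\infty$ gives decay of $\|u_{g,h}(t,\cdot)\|_\infty$. Unit-time windows over the rectangle $[t-1,t]\times\cO_t$, together with \eqref{hk-est1}, then convert this into the factor $\rho(t,x)^s$ measured from the current boundary. Where you differ is in how the exponential rate is produced.

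The paper compares $u_{g,h}$ with the \emph{nonlinear} solution $v_\infty$ on $\cO_\infty$. It uses Proposition \ref{pr cyl-1} only to know that $m(t)=\|v_\infty(t,\cdot)\|_\infty\to0$, and it controls the source $f(u_{g,h})$ on each window through $M(\tau)$. It then manufactures the rate with the small supersolution $\epsilon e^{-\epsilon t}\phi_{s,1}$. This needs the Hopf-type bound $\phi_{s,1}\ge\sigma_0\rho^s$ to fit $v_\infty(t_0,\cdot)$ underneath once $M(t_0-1)$ is small.

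You instead use the global inequality $f(u)\le f'(0)u$ from {\bf (f4)} to linearize from the start. The decay on $\cO_\infty$ then comes straight from the large-time kernel bound \eqref{hk-est2}, with the explicit rate $\epsilon=d\lambda_{s,1}(\cO_\infty)-f'(0)$ and for arbitrary continuous $u_0$. Each window step becomes a purely linear comparison with no source term to track.

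Your version is shorter and cleaner under {\bf (f1)}--{\bf (f4)}. The paper's version, once $v_\infty\to0$ is known, uses only the behaviour of $f(u)/u$ near $u=0$ and the principal eigenfunction, rather than the large-time asymptotics \eqref{hk-est2}.

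Your remark on the uniformity of $c_2$ is also right. Besides Remark \ref{rm-hk}, rescaling each $\cO_t$ to a unit interval gives it directly, since $|\cO_t|\in[2h_0,|\cO_\infty|]$.
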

 \begin{proof}
 Let $v_\infty(t,x)$ be the solution to
 \begin{equation}
 \left\{
\begin{array}{lll}
\partial_t  u  +d(-\Delta)^s  u=f(u)    \quad &{\rm for}\ \,   t\in(0,\infty),\ x\in \cO_{\infty},\\[3mm]
 u(t,x)=0  \quad  &{\rm for}\ \, t\in(0, \infty),\ \, x\in \R \setminus \cO_{\infty},\\[3mm]
 u(0,x)=u_0(x)  &{\rm for}\ \,  x\in\R.
\end{array}\right.
\end{equation}
Since   $f'(0)<d\lambda_{s,1}(\cO_{\infty})$, by Proposition \ref{pr cyl-1},
we have
\[
v_\infty(t,x)\to 0 \mbox{ uniformly for } x\in\R \mbox{ as } t\to+\infty.
\]
By the maximum principle we have
\[
u_{g,h}(t,x)\leq v_\infty(t,x) \ \mbox{ for } t>0,\ x\in\R.
\]
Denote $m(t):=\|v_\infty(t,\cdot)\|_\infty$. Then $m(t)\to 0$ as $t\to+\infty$.

Checking the proof of Lemma \ref{lem-I}, we see that there exists a constant $\hat C_0$ independent of $\tau>0$ such that
\begin{align*}
u_{g,h}(\tau+t,x)&\leq \hat C_0\big[\|u_{g,h}(\tau,\cdot)\|_\infty+\|f(u_{g,h}(\tau,\cdot))\|_\infty\big]\rho(\tau+t, x)^s\\
&\leq \hat C_0 M(\tau) \rho(\tau+t, x)^s \mbox{ for } t\in [1,2],\ \tau>0,\ x\in [g(\tau+t), h(\tau+t)],
\end{align*}
where
\[
M(\tau):=m(\tau)+\max_{\xi\in [0, m(\tau)]}|f(\xi)|\to 0 \mbox{ as } \tau\to+\infty.
\]
The above estimate implies
\begin{equation}\label{Mt}
u_{g,h}(t, x)\leq \hat C_0 M(t-1) \rho(t, x)^s \ \mbox{ for } t>2,\ x\in [g(t), h(t)].
\end{equation}
For the same reason, we have
\[
v_\infty(t,x)\leq \hat C_0 M(t-1) \rho(x)^s \ \mbox{ for } t>1,\ x\in [g_\infty, h_\infty],
\]
where $\rho(x):=\min\{x-g_\infty, h_\infty-x\}$.

Since
\[
\lim_{u\to 0} f(u)/u=f'(0) <d\lambda_{s,1}(\cO_{\infty}),
\]
we can choose $\epsilon>0$ small so that
\[
f(u)/u\leq f'(0)+\epsilon <d\lambda_{s,1}(\cO_{\infty})-\epsilon \mbox{ for } u\in (0, \epsilon].
\]
Let $\phi_{s,1}\geq 0$ be the principle eigenfunction associated to $\lambda_{s,1}(\cO_\infty)$ with $\|\phi_{s,1}\|_\infty=1$.
Then
\[
\bar u(t,x):=\epsilon e^{-\epsilon t}\phi_{s,1}(x)
\]
 satisfies
\[
\bar u_t+d(-\Delta)^s\bar u=[d\lambda_{s,1}(\cO_\infty)-\epsilon]\bar u\geq f(\bar u) \
 \mbox{ for } t>0,\ x\in\cO_\infty.
\]
Since $\phi_{s,1}(x)\geq \sigma_0 \rho(x)^s$ in $\cO_\infty$ for some $\sigma_0>0$, we have
\[
\bar u(0, x)\geq \epsilon\sigma_0\rho(x)^s \
 \mbox{ for } x\in\cO_\infty.
\]
We now fix $t_0\gg 1$ so that
\[
\hat C_0 M(t_0-1)<\epsilon\sigma_0.
\]
Then
\[
v_\infty(t_0,x)\leq \epsilon\sigma_0\rho(x)^s\leq \bar u(0,x).
\]
Therefore we can use the maximum principle to deduce
\[
v_\infty(t_0+t,x)\leq \bar u(t,x)=\epsilon e^{-\epsilon t}\phi_{s,1}(x) \ \mbox{ for } t>0,\ x\in \cO_\infty.
\]
It follows that
\[
m(t)=\|v_\infty(t,\cdot)\|_\infty\leq \epsilon e^{\epsilon t_0}e^{-\epsilon t} \ \mbox{ for } t>t_0.
\]
From the definition of $M(\tau)$ and ${\bf (f1)}$ we see that
\[
M(\tau)\leq K m(\tau) \ \mbox{ for some $K>0$ and all $\tau\geq t_0$.}
\]
Hence
\[
\hat C_0M(t-1)\leq \hat C_0 K m(t-1)\leq \tilde C_0 e^{-\epsilon t} \ \mbox{ for some $\tilde C_0>0$ and  all $t\geq t_0+1$}.
\]
It now follows from \eqref{Mt} that
\[
u_{g,h}(t, x)\leq \tilde C_0 e^{-\epsilon t} \rho(t, x)^s \ \mbox{ for } t>t_0+1,\ x\in [g(t), h(t)].
\]
 The proof is complete.
 \end{proof}
    \medskip

 We are now ready to prove Theorem \ref{thm-criteria}.

 \medskip

 \noindent
 {\bf Proof of Theorem \ref{thm-criteria}:} If $h_0\geq l_*/2$, then due to $h'(t)>0>g'(t)$ for $t>0$, we necessarily have $\infty\geq h_\infty-g_\infty>h(0)-g(0)=2h_0\geq l^*$. Hence we can use Theorem \ref{thm-s-v} to conclude that spreading always happens.\medskip

Next we consider the case $h_0<l_*/2$, and assume that this inequality is always satisfied in the remainder of this proof. We are going to complete
   the proof in four steps.\smallskip

 {\bf Step 1:} We show that vanishing happens for small $\mu>0$.

 Fix $\beta\in (0, \frac{l_*}2-h_0)$ and let
  $${\bar h}(t):=h_0+\beta (1-\frac{1}{2+t}),\quad {\bar g}(t):=-{\bar h}(t). $$
  Clearly $({\bar g,\bar h})\in \bG_{h_0}\times \bH_{h_0}$ and ${\bar h}_\infty-{\bar g}_\infty=2h_0+2\beta<l_*$. By Lemma \ref{lm-decay},
the solution $\bar u(t,x)$ of
\begin{equation}\label{eq 4.2-0}
 \left\{
\begin{array}{lll}
\partial_t  u  +d(-\Delta)^s  u=f(u)   \quad &{\rm for}\ \,   t\in(0,+\infty),\ \, x\in({\bar g}(t), {\bar h}(t)),\\[3mm]
 u(t,x)=0  \quad  &{\rm for}\ \, t\in(0,+\infty),\ \, x\in \R \setminus ({\bar g}(t), {\bar h}(t)),\\[3mm]
 u(0,x)=u_0(x)  &{\rm for}\ \,  x\in\R
\end{array}\right.
\end{equation}
satisfies, for some positive constants $\tilde C_0$, $ \epsilon$ and $T$,
\begin{equation}\label{e 4.10-1}
 \bar u(t,x)\leq   \tilde C_0 e^{-\epsilon t}  \bar\rho (t,x)^{s} \mbox{ for } x\in ({\bar g}(t), {\bar h}(t)) \mbox{ and } t>0,
\end{equation}
where $\bar \rho(t,x):=\min\{x-\bar g(t), \bar h(t)-x\}$.
Then
 \begin{align*}
 &\mu \int_{{\bar g}(t)}^{{\bar h}(t)}  \bar u(t,x) |{\bar h}(t)-x|^{-2s}  dx\leq \mu \tilde C_0 e^{-\epsilon t}   \int_{{\bar g}(t)}^{{\bar h}(t)}  \bar \rho (t,x)^s({\bar h} (t)-x)^{-2s} dx
  \\[1mm]&\leq  \mu \tilde C_0 e^{-\epsilon t}   \int_{{\bar g}(t)}^{{\bar h}(t)}  ({\bar h} (t)-x)^{-s} dx
  =  \frac{\mu \tilde C_0}{1-s} e^{-\epsilon t}   ({\bar h}(t)-{\bar g}(t))^{1-s}
  \\[1mm]&\leq  \mu L_0  e^{-\epsilon t}  \mbox{ for $t>T$ and $L_0:= \frac{\tilde C_0}{1-s} (2h_0+2\beta)^{1-s}$}.
 \end{align*}
 Since
 \[
 \underline \mu:=\frac\beta {L_0}\inf_{t>T}\frac{e^{\epsilon t}}{(2+t)^2}>0,
 \]
 we obtain, for $\mu\in (0, \underline\mu]$,
 \[
 \mu \int_{{\bar g}(t)}^{{\bar h}(t)}  \bar u(t,x) |{\bar h}(t)-x|^{-2s}  dx\leq \underline\mu L_0  e^{-\epsilon t}\leq \frac\beta{(2+t)^2}={\bar h}'(t) \ \mbox{ for } t>T.
 \]
 Similarly,
 \[
 -\mu \int_{{\bar g}(t)}^{{\bar h}(t)}  \bar u(t,x) |x-{\bar g}(t)|^{-2s}  dx\geq {\bar g}'(t) \ \mbox{ for } t>T.
 \]

 With $T$ given above, and $\rho(t,x):=\min\{x-g(t), h(t)-x\}$, the solution $(u,g,h)$ of \eqref{eq 1.1} satisfies,
 by \eqref{ab-2},
 \[
 u(t,x)\leq \frac{C_0}{\sqrt{t}}\,\rho(t,x)^s \ \mbox{ for } t\in (0, T],\ x\in (g(t), h(t)),
 \]
 where  $C_0>0$ is independent of $\mu>0$. It follows that
 \begin{align*}
 h(t)&\leq h_0+\mu \int_0^t \int_{g(\tau)}^{h(\tau)} \frac{C_0}{\sqrt{\tau}}[h(\tau)-x]^{-s} dxd\tau\\
&=h_0+\mu \int_0^t \frac{C_0}{\sqrt{\tau}}\frac{[h(\tau)-g(\tau)]^{1-s}}{1-s} d\tau\\
&\leq h_0+\mu \frac{[h(t)-g(t)]^{1-s}}{1-s}\int_0^t  \frac{C_0}{\sqrt{\tau}}d\tau  \\
&=h_0+\mu\hat C_0 \sqrt{t}\, [h(t)-g(t)]^{1-s} \ \mbox{ for } t\in (0, T],
\end{align*}
 Similarly,
\[
g(t)\geq -h_0-\mu\hat C_0 \sqrt{t}\, [h(t)-g(t)]^{1-s} \ \mbox{ for } t\in (0, T].
\]
Therefore
\[
 h(T)- g(T)\leq 2h_0+2\mu \hat C_0 \sqrt{T}\, [h(T)-g(T)]^{1-s} .
\]
Since $1-s\in (0, 1)$, the above inequality indicates that $h(T)-g(T)$ has an upper bound independent of $\mu\in (0, 1]$, say $h(T)-g(T)\leq L$
for $\mu\in (0, 1]$. Then
\[
h(T)-g(T)\leq 2h_0+2\mu C_0\sqrt{T} L \ \mbox{ for } \mu\in (0, 1].
\]
Therefore we can find $\underline \mu^*\in (0, \underline \mu]$ sufficiently small so that
\[
h(T)-g(T)\leq 2h_0+\beta/2 \ \mbox{ for } \mu\in (0, \underline\mu^*].
\]
It follows that $h(t)\leq h(T)\leq h_0+\beta/2=\bar h(0)<\bar h(t) $ for $t\in (0, T]$.
Similarly,
$g(t)>\bar g(t)$ for $t\in (0, T]$. We may now apply the maximum principle to conclude that, whenever $\mu\in (0, \underline\mu^*]$,
\[
[g(t), h(t)]\subset (\bar g(t), \bar h(t)),\ u(t,x)\leq\bar u(t,x) \ \mbox{ for } t\in (0, T], \ x\in (g(t), h(t)).
\]
In view of our earlier estimates for $\bar h'(t)$ and $\bar g'(t)$ with $t>T$, the above estimates allow us to regard $t=T$ as the initial time and
 apply the comparison principle (see Remark \ref{rm-cp}),   to conclude that, when $\mu\in (0,\underline\mu^*]$,
  \begin{align*}
& {\bar g}(t)  	< g(t)<h(t)<{\bar h}(t) \ \mbox{ for } t>T,\\[1mm]
&0\leq u(t,x)\leq \bar u(t,x) \ \mbox{ for } t> T,\ x\in\R.
  \end{align*}
Hence $u(t,x)\rightarrow 0$ uniformly in $x\in\R$ as
$t\rightarrow+\infty$, and  vanishing happens.

  \medskip

 {\bf Step 2:} We show that for all large $\mu>0$, spreading happens.

With $\cO_0=(-h_0,h_0)$,   the problem
 $$ \left\{
\begin{array}{lll}
\partial_t  u  +d(-\Delta)^s  u=f(u)    \quad &{\rm for}\ \,   (t,x)\in(0,2] \times \cO_0,\\[3mm]
 u(t,x)=0  \quad  &{\rm for}\ \, t\in(0,1],\ \, x\in \R \setminus \cO_0,\\[3mm]
 u(0,x)=\tilde u_0(x)  &{\rm for}\ \,  x\in \R
\end{array}\right.
$$
has a unique solution  $\bu(x,t;\cO_0)$ and there exists $c_1>0$ such that
$$\bu(t,x;\cO_0)\geq c_1\rho_{_{\cO_0}}(x)^s\qquad {\rm for}\ \,   (t,x)\in [1,2] \times \cO_0,$$
where $\rho_{_{\cO_0}}(x)=\min\{h_0-x,\, x-h_0\}$.
By the maximum principle Lemma \ref{lm comp-1}, we have $u(t,x)\geq \bu(t,x;\cO_0)$ for $t\in (0, 2]$ and $x\in \cO_0$. It follows that
 \begin{align}\label{es 4.3}
 u(t,x)\geq \bu(t,x;\cO_0)\geq  c_1\rho_{_{\cO_0}}(x)^s \quad {\rm for}\ \,   (t,x)\in [1,2] \times \cO_0.
 \end{align}

Direct calculation gives, for $t\in[1,2]$,
 \begin{align*}
h'(t)&
=  \mu \int_{g(t)}^{h(t)}   u(t,x) |{h(t)}-x|^{-2s}  dx
   \\[1mm]&\geq  c_1      {\mu }\int_{g(t)}^{h(t)} \rho_{_{\cO_0}}(x)^s    |{h(t)}-x|^{-2s}  dx\\[1mm]
  &\geq   \frac{c_1    \mu}{[h(2)-g(2)]^{2s}}\int_{-h_0}^{h_0} \rho_{_{\cO_0}}(x)^s     dx,
 \end{align*}
 where $c_1>0$ is independent of $\mu$.

 By Theorem \ref{thm-s-v}, to show spreading happens to $(u,g,h)$, it suffices to show $h(2)-g(2)>l^*$. We show this is the case if $\mu>0$ is large enough.
 Otherwise, we have $h(2)-g(2)\leq l^*$ for all $\mu>0$, and so the above estimate yields
 \[
 h'(t)\geq \frac{c_1    \mu}{(l^*)^{2s}}\int_{-h_0}^{h_0} \rho_{_{\cO_0}}(x)^s     dx=c_2\mu \ \mbox{ for } t\in [1,2],
 \]
 with $c_2>0$ independent of $\mu$. It follows that
 \[
 h(2)\geq h(1)+c_2\mu \to+\infty \mbox{ as } \mu\to+\infty,
 \]
 which clearly contradicts to $h(2)\leq h(2)-g(2)\leq l^*$.
  Therefore,  spreading happens for all large $\mu$.

 \smallskip

{\bf Step 3: } We show that if spreading happens to $(u,g,h)$ with $\mu=\mu_0$, then spreading happens for any $\mu>\mu_0$.

 For notational convenience, we denote the solution of \eqref{eq 1.1} with $\mu=\mu_0$ by $(u^0, g^0, h^0)$. Since spreading happens, there exists $t_0>0$ such that
 \[
 h^0(t_0)-g^0(t_0)>l^*.
 \]
 Fix $\mu>\mu_0$ and let $(u,g,h)$ be the solution of \eqref{eq 1.1} with such a $\mu$. By Theorem \ref{thm-s-v}, it suffices to show that $h(t_0)-g(t_0)\geq h^0(t_0)-g^0(t_0)$.
 This is actually a consequence of the comparison principle in Remark \ref{rm-cp}, although we cannot apply it directly due to $h(0)=h^0(0)=h_0$ and $g(0)=g^0(0)=-h_0$.
 Let $(u_n, g_n, h_n)$ be the solution of \eqref{eq 1.1} with $h_0$ replaced by $h_0+1/n$. Then the above mentioned comparison principle can be applied to deduce
 \[
 (g^0(t), h^0(t))\subset (g_n(t), h_n(t))\subset (g_{n-1}(t), h_{n-1}(t)),\ \ u^0(t,x)\leq u_n(t,x)\leq u_{n-1}(t,x) \]
  for $ t>0,\ x\in\R,\ n\geq 2$.

 Using the interior and boundary estimates for $u_n$, much as in the proof of Theorem \ref{thm-semi}, we can show that $(u_n, g_n, h_n)$ converges to
 some $(\tilde u, \tilde g, \tilde h)$ uniformly for  $x\in \R$ and $t$ in any bounded subset of $[0,\infty)$ as $n\to\infty$, and $(\tilde u, \tilde g, \tilde h)$ solves \eqref{eq 1.1}. By uniqueness, we necessarily have $(\tilde u, \tilde g, \tilde h)=(u,g,h)$.
 Hence
 \[h(t_0)-g(t_0)=\lim_{n\to+\infty} [h_n(t_0)-g_n(t_0)]\geq h^0(t_0)-g^0(t_0)>l^*,
 \]
 as desired.
 \medskip

 {\bf Step 4:} We prove the existence of $\mu^*>0$ with the desired properties.

 Define
 \[
 \Sigma:=\{\mu>0: \mbox{ Spreading happens to \eqref{eq 1.1}}\}.
 \]
 By Steps 1, 2 and 3 above, we know that $\Sigma$ is an unbounded interval of the form $(\mu^*,\infty)$ or $[\mu^*,\infty)$, with $0< \mu^*<\infty$. We show next that $\mu^*\not\in\Sigma$, which would complete the proof of the theorem.

  We argue indirectly by assuming that spreading happens for $\mu=\mu_*$. Denote the solution of \eqref{eq 1.1} with $\mu=\mu^*$ by $(u^*, g^*, h^*)$. Then there exists $t_1>0$ such that
  \[
  h^*(t_1)-g^*(t_1)>l^*.
  \]

  For any $\mu\in (0, \mu^*)$ we can use the same approximation argument as in Step 3 and the comparison principle to show that the corresponding solution $(u,g,h)$ with this $\mu$ satisfies
  \[
  (g(t), h(t))\subset (g^*(t), h^*(t)),\ \ u(t,x)\leq u^*(t,x) \ \mbox{ for } t>0, \ x\in\R.
  \]
  Moreover, if $\{\mu_n\}$ is an increasing sequence converging to $\mu^*$ as $n\to\infty$, and $(u_n, g_n, h_n)$ is the
  corresponding solution of \eqref{eq 1.1} with $\mu=\mu_n$, then for the same reason,
  \[
  (g_n(t), h_n(t))\subset (g_{n+1}(t), h_{n+1}(t))\subset (g^*(t), h^*(t)),\ \ u_n(t,x)\leq u_{n+1}(t,x) \leq u^*(t,x)
  \]
  for $ t>0, \ x\in\R,\ n\geq 1$. We may now use the interior and boundary estimates for $u_n$ as in Step 3 above to conclude that $(u_n, g_n, h_n)$ converges to
  some $(\tilde u, \tilde g, \tilde h)$ uniformly for $t\in [0, t_1]$ and $x\in \R$ as $n\to\infty$, and $(\tilde u, \tilde g, \tilde h)$ solves \eqref{eq 1.1} with $\mu=\mu^*$. By uniqueness, we necessarily have $(\tilde u, \tilde g, \tilde h)=(u^*,g^*,h^*)$. Thus
  \[
  h_n(t_1)-g_n(t_1)\to h^*(t_1)-g^*(t_1)>l^* \ \mbox{ as } n\to\infty.
  \]
  It follows that for all large $n$, $h_n(t_1)-g_n(t_1)>l^*$, which implies, by Theorem \ref{thm-s-v}, that spreading happens to $(u_n, g_n, h_n)$ for such $n$, and so $\mu_n\in\Sigma$. On the other hand, from the definition of $\mu^*$, the assumption $\mu_n<\mu^*$ implies $\mu_n\not\in\Sigma$. This contradiction completes our proof.
 \hfill$\Box$\medskip

Clearly Theorems \ref{teo 2.2} and \ref{teo 2} follow directly from the results in this section.

\section{Declarations and Statements}

\noindent {\bf  Conflicts of interest:} The authors declare that they have no conflicts of interest regarding this work.

\medskip

\noindent {\bf Data availability:} This paper has no associated data.

\medskip

\noindent{\bf Funding:}  
This work is supported  
by the National Natural Science Foundation of China (No. 12361043) and the Australian Research Council.

\end{document}